\newtheoremstyle{standard}
 {16pt}  %Abstand nach oben
 {16pt}  %Abstand nach oben
 {}  %Schrift Text
 {}  %Einrücken
 {\bfseries}%\normalfont} %Schrift Kopf
 {}  %Zeichen nach Kopf
 { } %Abstand
 {{\thmname{#1~}}{\thmnumber{#2.}}\thmnote{~(#3)}} %Format
\newtheoremstyle{kursiv}
 {16pt}  %Abstand nach oben
 {16pt}  %Abstand nach oben
 {\itshape}  %Schrift Text
 {}  %Einrücken
 {\bfseries}%\normalfont} %Schrift Kopf
 {}  %Zeichen nach Kopf
 { } %Abstand
 {{\thmname{#1~}}{\thmnumber{#2.}}\thmnote{~(#3)}} %Format
\theoremstyle{standard}
\newtheorem{defn} [subsection]{Definition}
\newtheorem{ex} [subsection]{Example}
\newtheorem{rem}   [subsection]{Remark}
\newtheorem{nota}   [subsection]{Notation}
\newtheorem{setup} [subsection]{}
\theoremstyle{definition}
 \newtheorem{problem}[subsection]{Problem}
\theoremstyle{kursiv}
\newtheorem{thm}[subsection]{Theorem}
\newtheorem{prop} [subsection]{Proposition}
\newtheorem{cor} [subsection]{Corollary}
\newtheorem{lem} [subsection]{Lemma}
\newcommand{\Evol}{\mathrm{Evol}}
\newcommand{\Ad}{\mathrm{Ad}}
\newcommand{\ad}{\mathrm{ad}}
\newcommand{\evol}{\mathrm{evol}}
\newcommand{\id}{\mathrm{id}}
\newcommand{\N}{\mathbb{N}}
\newcommand{\Z}{\mathbb{Z}}
\newcommand{\R}{\mathbb{R}}
\newcommand{\K}{\mathbb{K}}
\newcommand{\C}{\mathbb{C}}
\newcommand{\g}{\mathfrak{g}}
\renewcommand{\epsilon}{\varepsilon}
\newcommand{\set}[1]{\{  #1 \}}
\newcommand{\setm}[2]{\left\{\, #1 \middle\vert #2\,\right\}}
\newcommand{\norm}[1]{\left\lVert #1 \right\rVert}
\newcommand{\abs}[1]{\left| #1 \right|}
\newcommand{\coloneq}{\colonequals}
\DeclareMathOperator{\Hom}{Hom}
\DeclareMathOperator*{\Ann}{Ann}
\newcommand{\dd}{\mathrm{d}}
\DeclareSymbolFont{bbold}{U}{bbold}{m}{n}
\DeclareSymbolFontAlphabet{\mathbbold}{bbold}
\newcommand{\Cut}[1]{\Lambda}
\newcommand{\cA}{\ensuremath{\mathcal{A}}}
\newcommand{\cC}{\ensuremath{\mathcal{C}}}
\newcommand{\cH}{\ensuremath{\mathcal{H}}}
\newcommand{\cI}{\ensuremath{\mathcal{I}}}
\newcommand{\cJ}{\ensuremath{\mathcal{J}}}
\newcommand{\cL}{\ensuremath{\mathcal{L}}}
\newcommand{\cP}{\ensuremath{\mathcal{P}}}
\newcommand{\cT}{\ensuremath{\mathcal{T}}}
\newcommand{\cU}{\ensuremath{\mathcal{U}}}
\newcommand{\Lf}{\ensuremath{\mathbf{L}}}
\newcommand{\func}[5]{#1 \colon #2 \rightarrow #3 , #4 \mapsto #5}
\newcommand{\smfunc}[3]{#1 \colon #2 \rightarrow #3}
\newcommand{\nnfunc}[4]{#1 \rightarrow #2 , #3 \mapsto #4}
\newcommand{\smset}[1]{ \left\{ #1 \right\} }
\newcommand{\seqnN}[1]{\left( #1 \right)_{n\in\N_0}}
\newcommand{\LB}[1][\cdot \hspace{1pt} , \cdot]{[\hspace{1pt} #1 \hspace{1pt} ]}
   \newcommand{\wcotimes}{\ {\widetilde\otimes}\ }
\DeclareMathOperator{\one}{\mathbf{1}}
\newcommand{\Frechet}{Fr\'echet }
\newcommand{\Alg}   {\cA}					% abstract algebra
\newcommand{\CoAlg} {\cC}					% abstract coalgebra
\newcommand{\Hopf}  {\cH}					% abstract Hopf algebra
\newcommand{\HIdeal}{\cJ}			                % Hopf ideal
\newcommand{\lcA}{A}						% locally convex algebra
\newcommand{\lcB}{B}						% locally convex algebra
\newcommand{\lcAt}{\lcA_\otimes}				% Hom(H\otimes H , B)
\newcommand{\IdealA}{\cI_\lcA}					% ideal in a densely graded locally convex algebra
\newcommand{\InfChar}[2]{ \g(#1 , #2) }				% infinitesimal characters of the Hopf algebra #1 with values in the algebra #2
\newcommand{\Char}[2]{     G(#1 , #2) }			% infinitesimal characters of the Hopf algebra #1 with values in the algebra #2
\newcommand{\AnnGroup}{\Ann(\HIdeal, \lcB) \cap \Char{\Hopf}{\lcB}}
\newcommand{\AnnLieAlgebra}{\Ann(\HIdeal, \lcB) \cap \InfChar{\Hopf}{\lcB}}
\newcommand{\VSCat}{\mathbf{VS}_\K}
\newcommand{\WCCat}{\mathbf{WCVS}_\K}
\newcommand{\EE}{E}
\newcommand{\FF}{F}
\newcommand{\HH}{H}
\newcommand{\VV}{\mathcal V}
\newcommand{\WW}{\mathcal W}
\newcommand{\Func}[5]{
\begin{array}{rl}
   #1 \colon #2 & \longrightarrow #3   \\
        #4 & \longmapsto     #5   \\
\end{array}
}
\newcommand{\BiFunc}[7]{
\begin{array}{rl}
   #1 \colon #2 & \longrightarrow #3   \\
        #4 & \longmapsto     #5   \\
        #6 & \longmapsto     #7   \\
\end{array}
}
\newcommand{\RT}{\ensuremath{\cT}}
\DeclareMathOperator{\OST}{OST}
\title{Character groups of Hopf algebras as infinite-dimensional Lie groups} 
 \author{G. Bogfjellmo, %\footnote{NTNU Trondheim, Norway \href{mailto:geir.bogfjellmo@math.ntnu.no}{geir.bogfjellmo@math.ntnu.no}},
  R. Dahmen %\footnote{Technische Universit\"at Darmstadt, Germany. 
 %\href{mailto:dahmen@mathematik.tu-darmstadt.de}{dahmen@mathematik.tu-darmstadt.de}}%
  and A. Schmeding%\footnote{NTNU Trondheim, Norway
 %\href{mailto:alexander.schmeding@math.ntnu.no}{alexander.schmeding@math.ntnu.no}
 %}
 }
 \date{}
\begin{document}

\maketitle

\begin{abstract}
In this article character groups of Hopf algebras are studied from the perspective of infinite-dimensional Lie theory.
For a graded and connected Hopf algebra we construct an infinite-dimensional Lie group structure on the character group with values in a locally convex algebra.
This structure turns the character group into a Baker--Campbell--Hausdorff--Lie group which is regular in the sense of Milnor. 
Furthermore, we show that certain subgroups associated to Hopf ideals become closed Lie subgroups of the character group.

If the Hopf algebra is not graded, its character group will in general not be a Lie group.
However, we show that for any Hopf algebra the character group with values in a weakly complete algebra is a pro-Lie group in the sense of Hofmann and Morris.
\end{abstract}

\medskip

\textbf{Keywords:} real analytic, infinite-dimensional Lie group, Hopf algebra, continuous inverse algebra, Butcher group, weakly complete space, pro-Lie group, regular Lie group

\medskip

\textbf{MSC2010:} 22E65 (primary); %Infinite-dim Lie gps
16T05, %Hopf Algebras and Application
43A40, %Character groups and dual objects (from harmonic analysis category)
58B25, %Group structures and generalizations on infinite-dimensional manifolds  
46H30, %Functional calculus in topological algebras
22A05  %Structure of general topological groups
 (Secondary)

\tableofcontents

\section*{Introduction and statement of results} \addcontentsline{toc}{section}{Introduction and statement of results}
 Hopf algebras and their character groups appear in a variety of mathematical and physical contexts. 
 To name just a few, they arise in non-commutative geometry, renormalisation of quantum field theory (see \cite{CK98}) and numerical analysis (cf.\ \cite{Brouder-04-BIT}).
 We recommend \cite{MR2290769} as a convenient introduction to Hopf algebras and their historical development.
 
 %Character groups of Hopf algebras have become an important tool in renormalisation theory in physics. 
 In their seminal work \cite{MR1748177,MR1810779} Connes and Kreimer associate to the group of characters of a Hopf algebra of Feynman graphs a Lie algebra. 
 It turns out that this (infinite-dimensional) Lie algebra is an important tool to analyse the structure of the character group. 
 In fact, the character group is then called \textquotedblleft infinite-dimensional Lie group\textquotedblright \ meaning that it is associated to an infinite-dimensional Lie algebra. 
 Moreover, it is always possible to construct a Lie algebra associated to the character group of a Hopf algebra.
 These constructions are purely algebraic in nature and one may ask, whether the Lie algebras constructed in this way are connected to some kind of Lie group structure on the character group. 
 Indeed, in \cite{BS14} the character group of the Hopf algebra of rooted trees was turned into an infinite-dimensional Lie group.
 Its Lie algebra is closely related to the Lie algebra constructed in \cite{CK98} for the character group.
 \medskip
 
 These observations hint at a general theme which we explore in the present paper. 
 Our aim is to study character groups (with values in a commutative locally convex algebra) of a Hopf algebra from the perspective of infinite-dimensional Lie theory. 
 We base our investigation on a concept of $C^r$-maps between locally convex spaces known as Keller's $C^r_c$-theory\footnote{Although Keller's $C^r_c$-theory is in general inequivalent to the \textquotedblleft convenient setting\textquotedblright \ of calculus \cite{MR1471480}, in the important case of \Frechet spaces both theories coincide (e.g.\ Example \ref{ex: Butcher}).}~\cite{keller1974} (see \cite{MR830252,MR1911979,MR2261066} for streamlined expositions and Appendix \ref{app: mfd} for a quick reference). 
 In the framework of this theory, we treat infinite-dimensional Lie group structures for character groups of Hopf algebras.
 If the Hopf algebra is graded and  connected, it turns out that the character group can be made into an infinite-dimensional Lie group.
 We then investigate Lie theoretic properties of these groups and some of their subgroups. 
 In particular, the Lie algebra associated to the Lie group structure on the group of characters turns out to be the Lie algebra of infinitesimal characters.
 
 The character group of an arbitrary Hopf algebra can in general not be turned into an infinite-dimensional Lie group and we provide an explicit example for this behaviour.
 However, it turns out that the character group of an arbitrary Hopf algebra (with values in a finite dimensional algebra) is always a topological group with strong structural properties, i.e.\ it is always the projective limit of finite dimensional Lie groups. 
 Groups with these properties are accessible to Lie theoretic methods (cf.\ \cite{MR2337107}) albeit they may not admit a differential structure.   
 \medskip
 
 We now go into some more detail and explain the main results of the present paper.
 Let us recall first the definition of the character group of a Hopf algebra $(\Hopf, m_\Hopf, 1_\Hopf, \Delta_\Hopf, \epsilon_\Hopf, S_\Hopf)$ over the field $\K \in \{\R, \C\}$.
 Fix a commutative locally convex algebra $\lcB$. 
 Then the character group $\Char{\Hopf}{\lcB}$ of $\Hopf$ with values in $\lcB$ is defined as the set of all unital algebra characters
  \begin{displaymath}
   \Char{\Hopf}{\lcB} \coloneq \{\phi \in \Hom_\K (\Hopf, \lcB) \mid \phi (ab) = \phi (a)\phi (b), \forall a,b \in \Hopf \text{ and } \phi (1_\Hopf) = 1_\lcB\},
  \end{displaymath}
 where the group product is the convolution product $\phi \star \psi \coloneq m_\lcB \circ (\phi \otimes \psi ) \circ \Delta_\Hopf$.
 
 Closely related to this group is the Lie algebra of infinitesimal characters 
  \begin{displaymath}
   \InfChar{\Hopf}{\lcB} \coloneq \{ \phi \in \Hom_\K (\Hopf, \lcB) \mid \phi (ab) = \epsilon_\Hopf (a) \phi(b) + \epsilon_\Hopf(b)\phi(a)\}
  \end{displaymath}
 with the commutator Lie bracket $\LB[\phi ,\psi] \coloneq \phi \star \psi - \psi \star \phi$.
 
 It is well known that for a certain type of Hopf algebra (e.g.\ graded and connected) the exponential series induces a bijective map $\InfChar{\Hopf}{\lcB} \rightarrow \Char{\Hopf}{\lcB}$. 
 In this setting, the ambient algebra $(\Hom_\K (\Hopf, \lcB), \star)$ becomes a locally convex algebra with respect to the topology of pointwise convergence and  we obtain the following result.
 \bigskip
 
 \textbf{Theorem A} \emph{  Let $\Hopf$ be a graded and connected Hopf algebra and $\lcB$ a commutative locally convex algebra, then the group $\Char{\Hopf}{\lcB}$ of $\lcB$-valued characters of $\Hopf$ is a ($\K$-analytic) Lie group.}
  
 \emph{The Lie algebra of $\Char{H}{\lcB}$ is the Lie algebra $\InfChar{\Hopf}{\lcB}$ of infinitesimal characters.}
  \bigskip  
 
 Note that this Lie group structure recovers the Lie group structure on the character group of the Hopf algebra of rooted trees which has been constructed in \cite{BS14}. 
 For further information we refer to Example \ref{ex: Butcher}. \medskip 
 
 We then investigate the Lie theoretic properties of the character group of a graded connected Hopf algebra. 
 To understand these results first recall the notion of regularity for Lie groups.
 
 Let $G$ be a Lie group modelled on a locally convex space, with identity element $\one$, and
 $r\in \N_0\cup\{\infty\}$. We use the tangent map of the left translation
 $\lambda_g\colon G\to G$, $x\mapsto xg$ by $g\in G$ to define
 $v.g\coloneq T_{\one} \lambda_g(v) \in T_g G$ for $v\in T_{\one} (G) =: \Lf(G)$.
 Following \cite{dahmen2011} and \cite{1208.0715v3}, $G$ is called
 \emph{$C^r$-regular} if for each $C^r$-curve
 $\gamma\colon [0,1]\rightarrow \Lf(G)$ the initial value problem
 \begin{displaymath}
  \begin{cases}
   \eta'(t)&= \eta (t). \gamma(t)\\ \eta(0) &= \one
  \end{cases}
 \end{displaymath}
 has a (necessarily unique) $C^{r+1}$-solution
 $\Evol (\gamma)\coloneq\eta\colon [0,1]\rightarrow G$, and the map
 \begin{displaymath}
  \evol \colon C^r([0,1],\Lf(G))\rightarrow G,\quad \gamma\mapsto \Evol
  (\gamma)(1)
 \end{displaymath}
 is smooth. If $G$ is $C^r$-regular and $r\leq s$, then $G$ is also
 $C^s$-regular. A $C^\infty$-regular Lie group $G$ is called \emph{regular}
 \emph{(in the sense of Milnor}) -- a property first defined in \cite{MR830252}.
 Every finite dimensional Lie group is $C^0$-regular (cf.\ \cite{MR2261066}). Several
 important results in infinite-dimensional Lie theory are only available for
 regular Lie groups (see
 \cite{MR830252,MR2261066,1208.0715v3}, cf.\ also \cite{MR1471480} and the references therein).
 
 Concerning the Lie theoretic properties of the character groups our results subsume the following theorem.
 \bigskip
 
 \textbf{Theorem B} \emph{Let $\Hopf$ be a graded and connected Hopf algebra and $\lcB$ be a commutative locally convex algebra. 
   \begin{itemize}
    \item[\textup{(a)}] Then $\Char{H}{\lcB}$ is a Baker--Campbell--Hausdorff--Lie group which is exponential, i.e.~the Lie group exponential map is a global $\K$-analytic diffeomorphism.
    \item[\textup{(b)}] If $\lcB$ is sequentially complete then $\Char{\Hopf}{\lcB}$ is a $C^0$-regular Lie group. 
  \end{itemize}
}
 We then turn to a class of closed subgroups of character groups which turn out to be closed Lie subgroups. 
 For a Hopf ideal $\HIdeal$ of a Hopf algebra $\Hopf$, consider the annihilator 
 \begin{displaymath}
  \Ann (\HIdeal , \lcB) \coloneq \{ \phi \in \Hom_\K (\Hopf, \lcB) \mid \phi (a) = 0_\lcB ,\ \forall a \in \HIdeal\}.
 \end{displaymath}
 Then $\Ann (\HIdeal , \lcB) \cap \Char{\Hopf}{\lcB}$ becomes a subgroup and we obtain the following result.
 \bigskip
 
 \textbf{Theorem C} \emph{Let $\Hopf$ be a connected and graded Hopf algebra and $\lcB$ be a commutative locally convex algebra. 
  \begin{itemize}
   \item[\textup{(a)}] Then $\AnnGroup$ is a closed Lie subgroup of $\Char{\Hopf}{\lcB}$ whose Lie algebra is $\AnnLieAlgebra$.
   \item[\textup{(b)}] There is a canonical isomorphism of (topological) groups $\AnnGroup \cong \Char{\Hopf/\HIdeal}{\lcB}$, where $\Hopf/ \HIdeal$ is the quotient Hopf algebra. 
   If $\Hopf / \HIdeal$ is a connected and graded Hopf algebra (e.g.\ $\HIdeal$ is a homogeneous ideal) then this map is an isomorphism of Lie groups. 
  \end{itemize}
 }
 Note that in general $\Hopf / \HIdeal$ will not be graded and connected. 
 In these cases the isomorphism $\AnnGroup \cong \Char{\Hopf/\HIdeal}{\lcB}$ extends the construction of Lie group structures for character groups to Hopf algebras which are quotients of connected and graded Hopf algebras.
 However, this does not entail that the character groups of all Hopf algebras are infinite-dimensional Lie groups. 
 In general, the character group will only be a topological group with respect to the topology of pointwise convergence.
 We refer to Example \ref{ex: group algebra} for an explicit counter example of a character group which can not be turned into an infinite-dimensional Lie group.
 \medskip
 
 Finally, we consider a class of character groups of (non-graded) Hopf algebras whose topological group structure is accessible to Lie theoretic methods.
 The class of characters we consider are character groups with values in an algebra which is \textquotedblleft weakly complete\textquotedblright , i.e.\ the algebra is as a topological vector space isomorphic to $\K^I$ for some index set $I$. (All finite dimensional algebras are weakly complete, we refer to the Diagram \ref{setup: alg:prop} and Appendix \ref{app: weakly complete} for more information.)
 Then we obtain the following result:
 \bigskip
 
 \textbf{Theorem D} \emph{ Let $\Hopf$ be an arbitrary Hopf algebra and $\lcB$ be a commutative weakly complete algebra. 
 Then the following holds 
  \begin{itemize}
   \item[\textup{(a)}] the topological group $\Char{\Hopf}{\lcB}$ is a projective limit of finite dimensional Lie groups (a \emph{pro-Lie group} in the sense of \cite{MR2337107}).
   \end{itemize}
  A pro-Lie group is associated to a Lie algebra which we identify for $\Char{\Hopf}{\lcB}$: 
   \begin{itemize}
   \item[\textup{(b)}] the pro-Lie algebra $\cL (\Char{\Hopf}{\lcB})$ of the pro-Lie group $\Char{\Hopf}{\lcB}$ is the Lie algebra of infinitesimal characters $\InfChar{\Hopf}{\lcB}$.
  \end{itemize}
 } 
 Note that pro-Lie groups are in general only topological groups without a differentiable structure attached to them.
 However, these groups admit a Lie theory which has been developed in the extensive monograph \cite{MR2337107}.
 The results on the pro-Lie structure are somewhat complementary to the infinite-dimensional Lie group structure.
 If the Hopf algebra $\Hopf$ is graded and connected and $\lcB$ is a commutative weakly complete algebra, then the pro-Lie group structure of $\Char{\Hopf}{\lcB}$ is compatible with the infinite-dimensional Lie group structure of $\Char{\Hopf}{\lcB}$ obtained in Theorem A.
% \medskip
 
%  The paper is organised as follows: 
%  Before we prove the main results, we compile in a preliminary section statements on a certain type of locally convex algebra.
%  This eases the construction of the Lie group structures and enables the rather technical proof of regularity properties for character groups. 
%  After establishing these results, we collect in Section \ref{section: Examples} important (counter-)examples for Lie group structures on character groups of Hopf algebras.
%  Finally, we identify character groups of non-graded Hopf algebras with values in weakly complete algebras as pro-Lie groups in Section \ref{section: pro-Lie}.
%  
%  For the reader's convenience basic facts needed throughout the article are compiled in Appendix \ref{app: mfd} (on calculus in locally convex spaces and infinite-dimensional manifolds) and in Appendix \ref{app: alg} (on graded algebras and characters). 
%  Finally, Appendix \ref{app: weakly complete} embeds the constructions of the Lie group structures into a broader categorical perspective.
 
 \section*{Acknowledgements}
 The research on this paper was partially supported by the project \emph{Topology in Norway} (NRC project 213458) and \emph{Structure Preserving Integrators, Discrete Integrable Systems and Algebraic Combinatorics} (NRC project 231632).
 
\section{Linear maps on (connected) coalgebras}

In this preliminary section we collect first some basic results and notations used throughout the paper (also cf.\ Appendices \ref{app: mfd} - \ref{app: weakly complete}). 
Most of the results are not new, however, we state them together with a proof for the reader's convenience.

\begin{nota}
  We write $\N\coloneq \smset{1,2,3,\ldots}$, and $\N_0 \coloneq\N\cup\smset{0}$. 
  Throughout this article (with the exception of Appendix \ref{app: weakly complete}), $\K$ denotes either the field $\R$ of real or the field $\C$ of complex numbers. 
\end{nota}

\begin{setup}[Terminology]
 By the term \emph{(co-)algebra}, we always mean an (co-)associative unital $\K$-(co-)algebra. 
 The \emph{unit group} or \emph{group of units} of an algebra $\Alg$ is the group of its invertible elements and is denoted by $\Alg^\times$.

 A \emph{locally convex space} is a locally convex Hausdorff topological vector space. 
 and a \emph{weakly complete space} is a locally convex space which is topologically isomorphic to $\K^I$ for an index set $I$ (see Definition \ref{def: weakly_complete_space}). 
 A \emph{locally convex algebra (weakly complete) algebra} is a topological algebra whose underlying topological space is locally convex (weakly complete). (see also Lemma \ref{lem: fundamental_lemma_of_weakly_complete_algebras}) 
 Finally, a \emph{continuous inverse algebra} (\emph{CIA}) is a locally convex algebra with an open unit group and a continuous inversion.

 If we want to emphasize that an algebraic structure, such as a vector space or an algebra, carries no topology, we call it an \emph{abstract vector space} or \emph{abstract algebra}, etc.
\end{setup}

\begin{setup}[Algebra of linear maps on a coalgebra]		\label{setup: maps on coalgebra}
 \sloppy
 Throughout this section, let $\CoAlg=(\CoAlg,\Delta_\CoAlg,\epsilon_\CoAlg)$ denote an abstract coalgebra and let $\lcB$ denote a locally convex topological algebra, e.g.~ a Banach algebra. Then we consider the locally convex space 
 \[
  \lcA \coloneq \Hom_\K (\CoAlg, \lcB)
 \]
 of all $\K$-linear maps from $\CoAlg$ into $\lcB$. We will give this space the topology of pointwise convergence, i.e.~we embed $A$ into the product space $\lcB^\CoAlg$ with the product topology.\\
 The most interesting case is $\lcB=\K$:
 In this case, $\lcA$ is the algebraic dual of the abstract vector space $\CoAlg$ with the weak*-topology.

 The space $\lcA$ becomes a unital algebra with respect to the \emph{convolution product} (cf.\ \cite[Section IV]{MR0252485})
  \begin{displaymath}
   \star \colon \lcA \times \lcA \rightarrow \lcA, (h,g) \mapsto m_\lcB \circ (h \otimes g) \circ \Delta_\CoAlg.
  \end{displaymath}
 Here $m_\lcB \colon \lcB \otimes \lcB \rightarrow \lcB$ is the algebra multiplication. 
 The unit with respect to $\star$ is the map $1_\lcA = u_\lcB \circ \epsilon_\CoAlg$ where we defined $\func{u_\lcB}{\K}{\lcB}{z}{z 1_\lcB}$.
 We will now show that the map $\smfunc{\star}{\lcA\times\lcA}{\lcA}$ is continuous, hence turns $(A,\star)$ into a locally convex algebra:

 Since the range space $\lcA=\Hom_{\K}(\CoAlg,\lcB)$ carries the topology of pointwise convergence, we fix an element $c\in \CoAlg$. 
 We write $\Delta_\CoAlg(c)\in \CoAlg\otimes \CoAlg$ in Sweedler's sigma notation (see \cite[Notation 1.6]{MR1321145} or \cite[Section 1.2]{MR0252485}) as a finite sum
    \[
     \Delta_\CoAlg(c) = \sum_{(c)} c_{1}\otimes c_{2}.
    \]
    Then the convolution product $\phi\star\psi$ evaluated at point $c$ is of the form:
    \begin{align*}
     (\phi \star \psi) (c)	  &  	= 	m_\lcB\circ (\phi\otimes\psi)\circ \Delta_\CoAlg\left(c\right)
			  	= 		m_\lcB\circ ( \phi\otimes\psi) \left(	\sum_{(c)} c_{1}\otimes c_{2}	\right) 
			\\&	= \sum_{(c)} 	m_\lcB\left(\phi(c_{1}) \otimes \psi(c_{2}) \right)
				= \sum_{(c)} 	\phi(c_{1}) \cdot \psi(c_{2}).
    \end{align*}
    This expression is continuous in $(\phi,\psi)$ since point evaluations are continuous as well as multiplication in the locally convex algebra $\lcB$. 
    \end{setup}
    
 \begin{rem}
  Note that the multiplication of locally convex algebra $\lcB$ is assumed to be a continuous bilinear map $\lcB \times \lcB \rightarrow \lcB$.
  However, we did not need to put a topology on the space $\lcB\otimes\lcB$ nor did we say anything about the continuity of the linear map $\smfunc{m_\lcB}{\lcB\otimes\lcB}{\lcB}$.
 \end{rem}

\begin{lem}[Properties of the space $\lcA$]					\label{lem: completeness of A}
 Let $\lcA = \Hom_\K(\CoAlg,B)$ as above.
 \begin{itemize}
  \item [\textup{(a)}] As a locally convex space (without algebra structure), the space $\lcA$ is isomorphic to $\lcB^I$, where the cardinality of the index set $I$ is equal to the dimension of $\CoAlg$.
  \item [\textup{(b)}] If the vector space $\CoAlg$ is of countable dimension and $\lcB$ is a \Frechet space,  $\lcA$ is a \Frechet space as well.
  \item [\textup{(c)}] The locally convex algebra $\lcA$ is (Mackey/sequentially) complete if and only if the algebra $\lcB$ is (Mackey/sequentially) complete.
 \end{itemize}

\end{lem}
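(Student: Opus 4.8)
The plan is to prove (a) directly and then read (b) and (c) off from it, since (a) identifies $\lcA$, as a locally convex space, with a power of $\lcB$. For (a), I would fix a vector-space basis $(b_i)_{i\in I}$ of the abstract coalgebra $\CoAlg$, so that $\abs{I}=\dim_\K\CoAlg$. A $\K$-linear map out of $\CoAlg$ is freely and uniquely determined by its values on a basis, so the evaluation map $\Phi\colon\lcA\to\lcB^I$, $\phi\mapsto(\phi(b_i))_{i\in I}$, is a linear bijection. To see that $\Phi$ is a homeomorphism for the product topology on $\lcB^I$, recall that the topology of pointwise convergence on $\lcA$ is generated by the seminorms $\phi\mapsto q(\phi(c))$ with $c$ ranging over $\CoAlg$ and $q$ over the continuous seminorms of $\lcB$. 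Writing $c=\sum_{k=1}^n\lambda_k b_{i_k}$ gives $q(\phi(c))\le\sum_{k=1}^n\abs{\lambda_k}\,q(\phi(b_{i_k}))$, so the subfamily of these seminorms with $c$ restricted to the basis elements already generates the same topology; transported by $\Phi$, that subfamily is exactly the defining family of seminorms of the product topology on $\lcB^I$. Hence $\Phi$ is an isomorphism of locally convex spaces.

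Part (b) is then immediate: if $\dim_\K\CoAlg$ is countable then $I$ is countable, and a countable product of \Frechet spaces is a \Frechet space (a countable product of complete, metrizable, locally convex spaces is again complete, metrizable and locally convex); now apply (a).

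For (c), I would observe that completeness, Mackey completeness and sequential completeness depend only on the underlying topological vector space, so by (a) it suffices to show that $\lcB^I$ enjoys each of these properties if and only if $\lcB$ does. The \textquotedblleft if\textquotedblright\ direction is classical: a net (respectively a sequence, respectively a Mackey--Cauchy sequence) in a product of locally convex spaces is Cauchy (respectively Mackey--Cauchy) precisely when each of its coordinate projections is, and it converges precisely when it converges coordinatewise, so an arbitrary product inherits each of the three completeness notions from its factors. For the \textquotedblleft only if\textquotedblright\ direction, note that $I\neq\emptyset$ because $\CoAlg\neq\{0\}$; fixing $i_0\in I$, the map sending $b\in\lcB$ to the tuple equal to $b$ in coordinate $i_0$ and $0$ elsewhere is a topological embedding of $\lcB$ onto the closed subspace $\bigcap_{j\neq i_0}\ker\pr_j$ of $\lcB^I$, and a closed subspace of a complete (respectively Mackey complete, respectively sequentially complete) locally convex space has the same property. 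This proves (c).

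None of this is deep, and I do not expect a genuine obstacle. The two points that warrant a moment of care are the verification in (a) that the pointwise-convergence topology actually \emph{coincides} with the product topology (and is not merely comparable to it), and, in (c), recalling that Mackey completeness is stable under arbitrary products and under passage to closed subspaces; everything else is standard locally convex space theory.
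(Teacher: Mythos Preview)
Your proposal is correct and follows essentially the same route as the paper's proof: (a) via a basis of $\CoAlg$, (b) as a countable product of Fr\'echet spaces, and (c) by combining stability of the completeness notions under products (for the ``if'' direction) with the embedding of $\lcB$ as a closed subspace of $\lcB^I$ (for the ``only if'' direction). The paper's proof is terser---for (a) it simply remarks that a linear map is determined by its values on a basis---whereas you take the extra care to verify that the pointwise-convergence topology actually coincides with the product topology, which is a worthwhile addition.
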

\begin{proof} 
 \begin{enumerate}
  \item[(a)] A linear map is uniquely determined by its valued on a basis $(c_i)_{i \in I}$ of $\CoAlg$. 
 \item[(b)] As a locally convex space $\lcA\cong \lcB^I$. 
 Since $I$ is countable and $\lcB$ is a \Frechet space,  $\lcA$ is a countable product of \Frechet spaces, whence a \Frechet space.
 \item[(c)] By part (a), $\lcB$ is a closed vector subspace of $\lcA$. 
 So every completeness property of $\lcA$ is inherited by $\lcB$. On the other hand, products of Mackey complete (sequentially complete, complete) spaces are again of this type. \qedhere
 \end{enumerate}
\end{proof}

The terms \emph{abstract gradings} and \emph{dense gradings} used in the next lemma are defined in \ref{setup: abstract grading} and \ref{setup: dense grading}, respectively.
 
 \begin{lem}								\label{lem: grading and CIA}
  Let $\CoAlg$ be an abstract coalgebra, let $\lcB$ be a locally convex algebra, and set $\lcA=\Hom_\K(\CoAlg,\lcB)$ as above.
  \item [\textup{(a)}]
    If $\CoAlg$ admits an (abstract) grading $\CoAlg=\bigoplus_{n=0}^\infty \CoAlg_n$, the bijection 
    \begin{equation}\label{eq: nat:grad}
     \nnfunc{\lcA = \Hom_{\K}\left(\bigoplus_{n=0}^\infty \CoAlg_n,\lcB\right) }{ \prod_{n=0}^\infty \Hom_{\K}(\CoAlg_n ,\lcB)}{\phi}{\left(\phi|_{\CoAlg_n}\right)_{n\in\N_0} }
    \end{equation}
    turns $\lcA$ into a densely graded algebra with respect to $\seqnN{\Hom_{\K}(\CoAlg_n,\lcB)}$.
  \item [\textup{(b)}]
  If in addition $\CoAlg$ is connected and $\lcB$ is a CIA, then $\lcA$ is a CIA as well.
 \end{lem}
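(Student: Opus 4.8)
The plan for part~(a) is to transport the four defining ingredients of a densely graded algebra (Definition~\ref{setup: dense grading}) along the bijection~\eqref{eq: nat:grad}. First I would check that~\eqref{eq: nat:grad} is an isomorphism of locally convex spaces: it is manifestly a linear bijection, and since a basis of $\CoAlg$ is obtained by concatenating bases of the homogeneous pieces $\CoAlg_n$, the topology of pointwise convergence on $\lcA$ matches the product topology on $\prod_n \Hom_\K(\CoAlg_n,\lcB)$. Writing $\lcA_n \coloneq \Hom_\K(\CoAlg_n,\lcB)$ and identifying it with the closed subspace of those $\phi\in\lcA$ that vanish on $\CoAlg_m$ for all $m\neq n$, density of $\bigoplus_n \lcA_n$ in $\lcA$ is automatic for the product topology, and the unit $1_\lcA=u_\lcB\circ\epsilon_\CoAlg$ lies in $\lcA_0$ because the grading axioms (see~\ref{setup: abstract grading}) force $\epsilon_\CoAlg$ to be concentrated in degree zero.

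The one substantive point in part~(a) is that $\star$ respects the grading, i.e.\ $\lcA_p\star\lcA_q\subseteq\lcA_{p+q}$. Here I would use that, for homogeneous $c\in\CoAlg_n$, the grading condition $\Delta_\CoAlg(\CoAlg_n)\subseteq\bigoplus_{p+q=n}\CoAlg_p\otimes\CoAlg_q$ lets one choose a Sweedler representation $\Delta_\CoAlg(c)=\sum_{(c)}c_1\otimes c_2$ in which every $c_1,c_2$ is homogeneous with $\deg c_1+\deg c_2=n$ (project $\Delta_\CoAlg(c)$ onto its bihomogeneous components). Then the formula $(\phi\star\psi)(c)=\sum_{(c)}\phi(c_1)\psi(c_2)$ established in~\ref{setup: maps on coalgebra} shows that $\phi\in\lcA_p$, $\psi\in\lcA_q$ force $(\phi\star\psi)(c)=0$ unless $n=p+q$, and that in degree $n$ the product is exactly the Cauchy-type sum over $p+q=n$ of the densely graded algebra $\prod_n\lcA_n$. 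This establishes~(a).

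For part~(b), connectedness gives $\CoAlg_0=\K 1_\CoAlg$, hence $\lcA_0=\Hom_\K(\CoAlg_0,\lcB)\cong\lcB$ as locally convex algebras via $\phi\mapsto\phi(1_\CoAlg)$ (this map is multiplicative because $\Delta_\CoAlg(1_\CoAlg)=1_\CoAlg\otimes 1_\CoAlg$), so $\lcA_0$ is a CIA by hypothesis. It then remains to prove the general fact, applied to $\lcA$, that a densely graded locally convex algebra whose degree-zero part is a CIA is itself a CIA. I would argue as follows: the degree-zero projection $\pr_0\colon\prod_n\lcA_n\to\lcA_0$ is a continuous unital algebra homomorphism (since $(ab)_0=a_0b_0$), so if $a\in\lcA^\times$ then $a_0\in\lcA_0^\times$; conversely, if $a_0\in\lcA_0^\times$ then writing $a=a_0+a_+$ with $a_+\in\overline{\bigoplus_{n\geq 1}\lcA_n}$ we have $a=a_0(1_\lcA+a_0^{-1}a_+)$, and the Neumann-type series $\sum_{k\geq 0}(-1)^k(a_0^{-1}a_+)^k$ converges in $\prod_n\lcA_n$ because in each fixed degree $n$ only the terms with $k\leq n$ contribute, giving $a^{-1}=\bigl(\sum_{k\geq 0}(-1)^k(a_0^{-1}a_+)^k\bigr)a_0^{-1}$. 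Hence $\lcA^\times=\pr_0^{-1}(\lcA_0^\times)$ is open, and since a map into the product $\prod_n\lcA_n$ is continuous iff each of its components is, and each component of the above series is a finite algebraic expression in the components of $a$ and of the continuous map $a\mapsto a_0^{-1}$, continuity of $\star$ yields continuity of inversion on $\lcA^\times$.

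The only step needing genuine care is the Sweedler bookkeeping in part~(a): one must justify that $\Delta_\CoAlg(c)$ admits a bihomogeneous representation of total degree $n$ and that the induced multiplication coincides on the nose with the product of the densely graded algebra of Definition~\ref{setup: dense grading}. The remaining ingredients — the topological identification, the density of the algebraic direct sum, and the convergence plus componentwise-continuity arguments of part~(b) — are routine once one is working in a product of locally convex spaces; and if the abstract statement ``a densely graded locally convex algebra with a CIA in degree zero is a CIA'' is available, part~(b) reduces entirely to the identification $\lcA_0\cong\lcB$.
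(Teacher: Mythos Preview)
Your proposal is correct and follows essentially the same approach as the paper. The paper's proof is much terser: for (a) it simply asserts that the axioms of a graded coalgebra dualise directly to those of a densely graded algebra, and for (b) it cites the appendix result Lemma~\ref{lem: unit_groups_of_graded_algebras} (precisely the statement you anticipate at the end) together with the identification $\lcA_0\cong\lcB$; your version just unpacks both of these in place, including the Neumann-series argument that the paper records separately.
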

 \begin{proof}
  \begin{enumerate}
   \item It is clear that the map \eqref{eq: nat:grad} is an isomorphism of topological vector spaces. 
   Via this dualisation, the axioms of the graded coalgebra (see \ref{setup: abstract grading} (c) directly translate to the axioms of densely graded locally convex algebra (see \ref{setup: dense grading} (b)).
  \item By \ref{lem: unit_groups_of_graded_algebras}, we know that the densely graded locally convex algebra $\lcA$ is a CIA, if $\lcA_0=\Hom_{\K}(\CoAlg_0,\lcB)$ is a CIA. Since we assume that $\CoAlg$ is connected, this means that $\CoAlg\cong\K$ and hence $\lcA_0\cong\lcB$. The assertion follows. \qedhere
  \end{enumerate}
 \end{proof}

 From Lemma \ref{lem: grading and CIA} and \ref{thm: glockner_CIA_BCH} one easily deduce the following proposition.
\newpage
 \begin{prop}[$\lcA^\times$ is a Lie group]				\label{prop: unit group Lie group}
  Let $\CoAlg$ be an abstract graded connected coalgebra and let $\lcB$ be a Mackey complete CIA. Then the unit group $\lcA^\times$ of the densely graded algebra $\lcA = (\Hom_{\K}(\CoAlg,\lcB),\star)$ is a BCH--Lie group.
  The Lie algebra of the group $\lcA^\times$ is $(\lcA,[\cdot,\cdot])$, where $[\cdot,\cdot]$ denotes the usual commutator bracket.

  Furthermore, the Lie group exponential function of $\lcA^\times$ is given by the exponential series, whence it restricts to the exponential function constructed in Lemma \ref{lem: exp_and_log}.
 \end{prop}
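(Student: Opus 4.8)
The plan is to reduce Proposition \ref{prop: unit group Lie group} to the cited structural result \ref{thm: glockner_CIA_BCH} (Gl\"ockner's theorem that the unit group of a Mackey complete continuous inverse algebra is a BCH--Lie group with Lie algebra the commutator algebra and Lie group exponential the exponential series), so the only work is to check that $\lcA=(\Hom_\K(\CoAlg,\lcB),\star)$ satisfies the hypotheses of that theorem. First I would invoke Lemma \ref{lem: grading and CIA}(b): since $\CoAlg$ is by assumption graded and connected and $\lcB$ is a CIA, $\lcA$ is a CIA. (Here one must note that $\lcB$ being a Mackey complete CIA is in particular a CIA, so the hypothesis of the lemma is met.) Next I would verify Mackey completeness of $\lcA$: by Lemma \ref{lem: completeness of A}(c), $\lcA$ is Mackey complete precisely because $\lcB$ is. Combining these two points, $\lcA$ is a Mackey complete CIA, and \ref{thm: glockner_CIA_BCH} applies verbatim, giving that $\lcA^\times$ is a BCH--Lie group with Lie algebra $(\lcA,[\cdot,\cdot])$ and exponential map the exponential series.

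The remaining assertion to address is the very last sentence: that the exponential series, now identified as the Lie group exponential of $\lcA^\times$, restricts to the exponential function constructed in Lemma \ref{lem: exp_and_log}. For this I would simply observe that the exponential function of Lemma \ref{lem: exp_and_log} is \emph{defined} by the same exponential series $\exp(\phi)=\sum_{k\ge 0}\phi^{\star k}/k!$, but on the appropriate subset (typically the elements whose degree-zero component is $0$, equivalently the infinitesimal characters, or the maximal ideal of the densely graded algebra) where the series converges by the dense grading estimates. Since both maps are given by literally the same series wherever the domain of the latter is contained in that of the former, they agree on the overlap; this is a one-line verification once the domains are matched up, and I would phrase it as "by construction the two series coincide on $\ldots$''.

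The only mild subtlety — and the place I would be most careful — is bookkeeping about which completeness hypothesis is actually needed and making sure the chain $\lcB$ Mackey complete CIA $\Rightarrow$ $\lcA$ Mackey complete CIA is airtight: Lemma \ref{lem: grading and CIA}(b) needs $\lcB$ to be a CIA and $\CoAlg$ graded connected to get the CIA property of $\lcA$, while Lemma \ref{lem: completeness of A}(c) separately upgrades completeness from $\lcB$ to $\lcA$; neither lemma alone gives "Mackey complete CIA'', but together they do. I do not anticipate a genuine obstacle here — the proposition is essentially a corollary, and the proof is a two-step citation (apply Lemma \ref{lem: grading and CIA} and Lemma \ref{lem: completeness of A}, then quote \ref{thm: glockner_CIA_BCH}) plus the trivial identification of the two exponential series. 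Accordingly the proof should be short, as the excerpt's phrase "one easily deduce[s]'' already signals.
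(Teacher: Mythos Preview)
Your proposal is correct and follows exactly the route the paper indicates: the paper's entire justification is the sentence ``From Lemma \ref{lem: grading and CIA} and \ref{thm: glockner_CIA_BCH} one easily deduce[s] the following proposition,'' and you have unpacked precisely this, even supplying the Mackey completeness step via Lemma \ref{lem: completeness of A}(c) that the paper leaves implicit. Your treatment of the final clause about the exponential series is also the intended one-line identification.
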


 \begin{thm}[Regularity of $\lcA^\times$]				\label{thm: unit_group_regular}
  Let $\CoAlg$ be an abstract graded connected coalgebra and let $\lcB$ be a Mackey complete CIA. As above, we set
  $ \lcA\coloneq(\Hom_\K(\CoAlg,\lcB),\star)$ and assume that $\lcB$ is commutative or locally m-convex (the topology is generated by a system of submultiplicative seminorms).
  \begin{itemize}
   \item [\textup{(a)}] The Lie group $\lcA^\times$ is $C^1$-regular.
   \item [\textup{(b)}] If in addition, the space $\lcB$ is sequentially complete, then $\lcA^\times$ is $C^0$-regular.
  \end{itemize}
  In both cases the associated evolution map is even $\K$-analytic.
 \end{thm}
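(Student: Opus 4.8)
The plan is to solve the evolution equation componentwise along the dense grading, reducing it to the (known) regularity of $\lcB^\times$ in degree zero and to linear, ``triangular'' equations in the higher degrees. By Lemma~\ref{lem: grading and CIA} the convolution algebra $\lcA=(\Hom_\K(\CoAlg,\lcB),\star)$ is densely graded with respect to the components $\lcA_n\coloneq\Hom_\K(\CoAlg_n,\lcB)$ and, $\CoAlg$ being connected, is a CIA with $\lcA_0\cong\lcB$; by Lemma~\ref{lem: completeness of A} it is Mackey complete (resp.\ sequentially complete) as soon as $\lcB$ is. For $\lcA^\times$ one has $\Lf(\lcA^\times)=\lcA$ (Proposition~\ref{prop: unit group Lie group}), and since $\lambda_g$ is the restriction of the continuous linear map $x\mapsto x\star g$ we get $v.g=v\star g$; hence, for a $C^r$-curve $\gamma\colon[0,1]\to\lcA$, the initial value problem defining $\Evol(\gamma)$ reads $\eta'(t)=\eta(t)\star\gamma(t)$, $\eta(0)=\one$. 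Writing $\pr_n\colon\lcA\to\lcA_n$ for the continuous linear projections, $\eta_n\coloneq\pr_n\circ\eta$ and $\gamma_n\coloneq\pr_n\circ\gamma$, the densely graded structure (cf.\ \ref{setup: dense grading}), in particular $\lcA_j\star\lcA_k\subseteq\lcA_{j+k}$, turns this single equation into the triangular system
\[
 \eta_n'(t)=\sum_{k=0}^{n}\eta_k(t)\star\gamma_{n-k}(t)\quad(n\in\N_0),\qquad \eta_0(0)=1_{\lcA_0},\ \ \eta_n(0)=0\ \ (n\ge1).
\]

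First I would treat the degree-zero equation $\eta_0'=\eta_0\star\gamma_0$, $\eta_0(0)=1_{\lcA_0}$. This is precisely the evolution equation of the unit group $\lcA_0^\times\cong\lcB^\times$; under the present hypotheses on $\lcB$ (Mackey complete CIA, and in case~(b) additionally sequentially complete and commutative or locally m-convex) this group is $C^1$-regular (resp.\ $C^0$-regular) with $\K$-analytic evolution, which is the known regularity of unit groups of such CIAs \cite{dahmen2011,1208.0715v3} --- this is exactly where the hypotheses on $\lcB$ enter. Thus $\eta_0$ exists, takes values in $\lcA_0^\times$, is of class $C^{r+1}$, and depends smoothly (in fact $\K$-analytically) on $\gamma_0$. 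For $n\ge1$ the degree-$n$ equation is the inhomogeneous \emph{linear} equation $\eta_n'=\eta_n\star\gamma_0+f_n$ in the closed $\lcA_0$-module $\lcA_n\subseteq\lcA$, with forcing term $f_n(t)=\sum_{k=0}^{n-1}\eta_k(t)\star\gamma_{n-k}(t)$ involving only strictly lower degrees. Assuming $\eta_0,\dots,\eta_{n-1}$ constructed, variation of constants against the fundamental solution $\eta_0\in C^{r+1}([0,1],\lcA_0^\times)$ gives the explicit solution
\[
 \eta_n(t)=\Bigl(\,\textstyle\int_0^t f_n(s)\star\eta_0(s)^{-1}\,\dd s\,\Bigr)\star\eta_0(t),
\]
the integral existing because $\lcA_n\cong\lcB^{I_n}$ inherits Mackey (resp.\ sequential) completeness from $\lcB$ and the integrand is of class $C^1$ in case~(a) (resp.\ $C^0$ in case~(b)); one checks $\eta_n(0)=0$ and $\eta_n\in C^{r+1}$ using that convolution and the inversion of the CIA $\lcA_0$ are smooth. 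Then $\eta(t)\coloneq(\eta_n(t))_{n\in\N_0}\in\prod_n\lcA_n=\lcA$ defines a $C^{r+1}$-curve (a curve into a product of locally convex spaces is $C^{r+1}$ iff all its components are), which lies in $\lcA^\times$ because $\pr_0\eta(t)=\eta_0(t)$ is invertible and invertibility in the densely graded CIA $\lcA$ is detected in degree zero (cf.\ \ref{lem: unit_groups_of_graded_algebras} and the proof of Lemma~\ref{lem: grading and CIA}), and which solves $\eta'=\eta\star\gamma$, $\eta(0)=\one$ since all its components do. Uniqueness is the usual uniqueness for the triangular system, solved recursively by linear equations.

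It remains to show that $\evol\colon C^r([0,1],\lcA)\to\lcA^\times$, $\gamma\mapsto\Evol(\gamma)(1)$, is smooth (resp.\ $\K$-analytic); since $\evol=\ev_1\circ\Evol$ with $\ev_1\colon C^{r+1}([0,1],\lcA)\to\lcA$ continuous linear, and since $C^{r+1}([0,1],\lcA)=\prod_n C^{r+1}([0,1],\lcA_n)$, it suffices to show that each map $\gamma\mapsto\eta_n(\cdot)\in C^{r+1}([0,1],\lcA_n)$ is smooth (resp.\ $\K$-analytic). I would prove this by induction on $n$. For $n=0$ it is the smooth ($\K$-analytic) dependence of the evolution of $\lcA_0^\times$ on its argument --- a standard consequence of the regularity of $\lcB^\times$ quoted above --- precomposed with the continuous linear map $\gamma\mapsto\gamma_0$. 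For the inductive step, the map $\gamma\mapsto\eta_n(\cdot)$ is assembled from the maps $\gamma\mapsto\eta_0(\cdot),\dots,\gamma\mapsto\eta_{n-1}(\cdot)$ (smooth/analytic by induction) and $\gamma\mapsto\gamma_j(\cdot)$ (continuous linear) by the operations of pointwise convolution $C^\ell([0,1],\lcA_j)\times C^\ell([0,1],\lcA_k)\to C^\ell([0,1],\lcA_{j+k})$ (continuous bilinear, hence smooth and $\K$-analytic), pointwise inversion in $C^{r+1}([0,1],\lcA_0^\times)$ (post-composition with the smooth, $\K$-analytic inversion of the CIA $\lcA_0$, which is smooth/analytic by the pushforward lemma, cf.\ \cite{MR830252,MR2261066}) and integration $C^r([0,1],\lcA_n)\to C^{r+1}([0,1],\lcA_n)$ (continuous linear). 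Composing, $\gamma\mapsto\eta_n(\cdot)$ is smooth ($\K$-analytic), which completes the induction and hence the proof of (a), (b) and the analyticity of the evolution. Here $r=1$ for (a) and $r=0$ for (b).

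The main obstacle is this last step, the parameter dependence: the whole argument rests on the observation that $\lcA$ is a \emph{product} $\prod_n\lcA_n$, so that regularity can be checked degree by degree, which reduces the infinite-dimensional differential equation to one genuinely non-trivial equation --- in degree zero, living in the CIA $\lcB$, precisely where the completeness and commutativity/m-convexity hypotheses are needed and where one borrows the known CIA regularity --- together with linear equations in the higher degrees whose solution, inversion, convolution and integration must each be recognised as smooth ($\K$-analytic) operations between spaces of curves; the bookkeeping of this induction and the invocation of the $\Omega$-/pushforward lemmas for curve-space operations is where the care lies. An alternative is to use the split extension $\lcA^\times\cong\lcB^\times\ltimes(\one+\prod_{n\ge1}\lcA_n)$, combining the regularity of $\lcB^\times$ with the (easy) $C^0$-regularity of the pro-unipotent group $\one+\prod_{n\ge1}\lcA_n$ via a regularity theorem for extensions, but this merely repackages the same work.
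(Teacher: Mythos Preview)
Your approach is correct but takes a genuinely different route from the paper's. The paper does not solve the evolution equation by hand; instead it proves a structural lemma (Lemma~\ref{lem: densely_GN}) showing that a densely graded algebra $\lcA$ has the (GN)-property of Definition~\ref{defn: GN_property} as soon as its degree-zero part $\lcA_0$ does. Since $\lcA_0\cong\lcB$ and $\lcB$ is either commutative or locally m-convex, \ref{setup: GN_commutative_locally_m_convex} gives (GN) for $\lcA_0$, hence for $\lcA$, and then \ref{setup: GN_regular} yields regularity of $\lcA^\times$ in one stroke. The work is concentrated in the seminorm estimate of Lemma~\ref{lem: densely_GN}, which is a combinatorial argument controlling long products in $\lcA$ by products in $\lcA_0$.

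Your argument, by contrast, exploits the triangular structure of the evolution equation along the grading: you borrow regularity of $\lcB^\times$ in degree zero and solve the higher degrees by explicit variation of constants against the fundamental solution $\eta_0$, then assemble smooth/analytic parameter dependence inductively from continuous bilinear, continuous linear, and pushforward operations. This is more elementary in that it avoids the (GN)-machinery entirely and makes transparent that the hypotheses on $\lcB$ enter only through the regularity of $\lcB^\times$; the price is the inductive bookkeeping for parameter dependence of $\gamma\mapsto\eta_n(\cdot)$ in the curve spaces, which the paper's approach sidesteps by reducing to the black-box \ref{setup: GN_regular}. Your split-extension alternative $\lcA^\times\cong\lcB^\times\ltimes(\one+\IdealA)$ is a third route, again different from the paper's. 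One small slip in your write-up: the commutativity/locally m-convex hypothesis is needed in case~(a) as well as in case~(b), since it is what guarantees $C^1$-regularity of $\lcB^\times$; your parenthetical suggests otherwise.
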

 
 \begin{proof}
  Since $\lcB$ is a commutative CIA or locally m-convex, the algebra $\lcB$ has the (GN)-property by \ref{setup: GN_commutative_locally_m_convex}.
  The algebra $\lcA\coloneq\Hom_\K(\CoAlg,\lcB)$ is densely graded with $\lcA_0\cong \lcB$ by \ref{lem: grading and CIA}. 
  We claim that since $\lcA_0 = \lcB$ has the (GN)-property, the same holds for $\lcA$ (the details are checked in Lemma \ref{lem: densely_GN} below).
 
  By Lemma \ref{lem: completeness of A}, we know that $\lcA$ and $\lcB$ share the same completeness properties, i.e.~the algebra $\lcA$ is Mackey complete if $\lcB$ is so and the same hold for sequential completeness.
  In conclusion, the assertion follows directly from \ref{setup: GN_regular}.
 \end{proof}

 \begin{lem}								\label{lem: densely_GN}
  Let $\lcA$ be a densely graded algebra and denote the dense grading by $(A_N)_{N\in\N_0}$.
  Then $A$ has the (GN)-property (see Definition \ref{defn: GN_property}) if and only if the subalgebra $A_0$ has the (GN)-property.
 \end{lem}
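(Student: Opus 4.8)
The plan is to prove both implications via the explicit description of a dense grading. Recall that a densely graded algebra $\lcA$ carries a filtration by the closed subspaces $\lcA_{\geq N} \coloneq \overline{\bigoplus_{n \geq N} \lcA_n}$, and that the (GN)-property (Definition \ref{defn: GN_property}) concerns the existence of a single seminorm controlling iterated products together with a summability condition across the grading; the key point is that the grading of $\lcA$ is built so that $\lcA_0$ sits as a closed subalgebra (a retract, even, via the projection $\pr_0$) and the positive part $\lcA_{\geq 1}$ is a (pro-)nilpotent ideal.

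For the easy direction, assume $\lcA$ has the (GN)-property. Since $\lcA_0$ is a closed subalgebra of $\lcA$ and the grading isomorphism identifies it with the $N=0$ component, the seminorm and the constants witnessing (GN) for $\lcA$ restrict to $\lcA_0$; one only needs to check that the summability/estimate conditions are inherited under passing to a closed subalgebra, which is immediate because every product of elements of $\lcA_0$ stays in $\lcA_0$ and the relevant seminorm estimates hold a fortiori. So first I would spell out that restriction argument.

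For the substantive direction, assume $\lcA_0$ has the (GN)-property with witnessing seminorm $p$ and constants. I would extend $p$ to a seminorm $\tilde p$ on all of $\lcA$ by combining $p \circ \pr_0$ with the canonical seminorms $p_n$ coming from the dense grading (scaled by a geometric factor $\rho^n$ for suitable $\rho > 0$, so that the positive part contributes a convergent geometric series). The point of the scaling is that an element of $\lcA_{\geq 1}$ has small $\tilde p$-norm contributions from high grading components, and products shift the grading upward, so iterated products of such elements decay geometrically. The (GN)-estimate for $\lcA$ then splits according to how many factors land in $\lcA_{\geq 1}$ versus $\lcA_0$: the all-$\lcA_0$ term is controlled by the hypothesis on $\lcA_0$, and any term involving at least one positive-degree factor is controlled by the geometric decay, uniformly in the number of factors. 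Summing these contributions gives the required single estimate for $\lcA$.

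The main obstacle I anticipate is bookkeeping: making the choice of $\rho$ and the auxiliary seminorm precise enough that the (GN) summability condition (which typically has the shape $\sum_n C^n p(\text{$n$-fold product}) < \infty$ or an analogous bound on $p$ of a resolvent-type series) actually converges after the split, while still dominating $p$ on $\lcA_0$. Concretely, one must ensure the constant $C$ for $\lcA$ can be taken to be that of $\lcA_0$ on the degree-zero part, and that the geometric series $\sum_n (C\rho)^n$ converges, which forces $\rho < 1/C$; then one checks this same $\tilde p$ still satisfies submultiplicativity-type bounds coming from the densely graded structure (multiplication maps $\lcA_m \times \lcA_n \to \lcA_{m+n}$ continuously). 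Once the seminorm is fixed, the estimates are routine, so I would present the construction of $\tilde p$ carefully and then indicate the split of the sum, leaving the elementary geometric-series manipulations to the reader.
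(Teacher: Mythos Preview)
Your proposal has a fundamental misreading of the (GN)-property. Definition \ref{defn: GN_property} is a $\forall p\,\exists q,M$ statement: \emph{for every} continuous seminorm $p$ on the algebra, one must produce a continuous seminorm $q$ and a constant $M$ with $p(a_1\cdots a_n)\leq M^n$ whenever $q(a_i)\leq 1$. There is no single ``witnessing seminorm $p$ and constants'', and the property has nothing to do with convergence of a series $\sum_n C^n p(\cdot)$ or resolvent-type summability. So in the hard direction you never fix an arbitrary continuous seminorm $P$ on $\lcA$ as the target of the estimate, which is where the argument must begin.

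Even setting that aside, your construction of $\tilde p$ cannot work as written. On a densely graded algebra $\lcA\cong\prod_{n}\lcA_n$ the topology is the product topology, so a continuous seminorm depends only on \emph{finitely many} components; an infinite combination $\sum_n \rho^n p_n\circ\pi_n$ is not continuous. The paper exploits exactly this finiteness: given $P$, one may assume $P=\max_{0\leq N\leq L}(p_N\circ\pi_N)$ for some fixed $L$. Then at most $L$ factors of positive degree can contribute to $\pi_N$ of a product for $N\leq L$; there are only finitely many multilinear maps $\lcA_{w_1}\times\cdots\times\lcA_{w_r}\to\lcA_{|w|}$ with $|w|\leq L$ to bound, and one chooses finitely many seminorms $q_k$ on the $\lcA_k$ accordingly. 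The only place where arbitrarily many factors appear is in degree~$0$, and that is precisely where the (GN)-property of $\lcA_0$ is invoked to control $q_0(c_1\cdots c_n)\leq M_0^n$. Your geometric-decay intuition is pointing at the right phenomenon (positive-degree factors raise the grading and hence eventually leave the first $L$ components), but the correct implementation is combinatorial at a fixed finite level, not an infinite weighted sum.

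The easy direction is fine in spirit, though note you must start with an arbitrary seminorm $p_0$ on $\lcA_0$, extend it to $P=p_0\circ\pi_0$ on $\lcA$, apply (GN) for $\lcA$ to get $Q$ and $M$, and then restrict $Q$ to $\lcA_0$.
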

 \begin{proof}
  Let us first see that the condition is necessary. 
  Pick a continuous seminorm $p_0$ on $\lcA_0$. 
  Then $P \coloneq p_0 \circ \pi_0$ is a continuous seminorm on $\lcA$ (where $\pi_0 \colon \lcA \rightarrow \lcA_0$ is the canonical projection).
  Following Definition \ref{defn: GN_property} there is a continuous seminorm $Q$ on $\lcA$ and a number $M>0$ such that the following condition holds 
    \begin{equation}\label{eq:GNprop}
     \text{for each }n\in\N , (\overline a_1,\ldots,\overline a_n)\in \lcA^n \text{ with } Q(\overline a_j)\leq 1 \text{ we have } P(\overline a_1 \cdots \overline a_n)\leq M^n.
    \end{equation}
  Now we set $q_0 \coloneq Q|_{\lcA_0}$ and observe that $q_0$ is a continuous seminorm as the inclusion $\lcA_0 \rightarrow \lcA$ is continuous and linear.
  A trivial computation now shows that $p_0, q_0$ and $M$ satisfy \eqref{eq:GNprop}. 
  We conclude that $\lcA_0$ has the (GN)-property.
  
  For the converse assume that $\lcA_0$ has the (GN)-property and fix a continuous seminorm $P$ on $\lcA$. 
  The topology on $\lcA$ is the product topology, i.e.\ it is generated by the canonical projections $\smfunc{\pi_N}{\lcA}{\lcA_N}$.
  Hence we may assume that $P$ is of the form
  \[
   P = \max_{0 \leq N \leq L} \left(	 p_N\circ \pi_N	\right)
  \]
  where each $\smfunc{p_N}{\lcA_N}{\left[0,\infty\right[}$ is a continuous seminorm on $\lcA_N$. 
  The number $L \in\N_0$ is finite and remains fixed for the rest of the proof.
  
  The key idea is here that $P$ depends only on a finite number of spaces in the grading.
  Now the multiplication increases the degree of elements except for factors of degree $0$.
  However, these contributions can be controlled by the (GN)-property in $\lcA_0$.  
  
  We will now construct a continuous seminorm $Q$ on $\lcA$ and a number $M>0$ such that \eqref{eq:GNprop} holds.
 \medskip
 
 \textbf{Construction of the seminorm $Q$.} 
  For a multi-index $w = (w_1 , w_2 , \ldots , w_r)\in\N^r$ (where $r \in \N$) we denote by $\abs{w}$ the sum of the entries of the multi-index $w$. 
  Define for $r\leq L$ and $w \in \N^r$ with $\abs{w} \leq L$ a continuous $r$-linear map
  \begin{displaymath}
   \func{f_w}{\lcA_{w_1}\times \lcA_{w_2} \times \cdots\times \lcA_{w_r}}{\lcA_{\abs{w}}}{(b_1,\ldots,b_r)}{b_1\cdots b_r,}
  \end{displaymath}
  Since $L<\infty$ is fixed and the $w_i$ are strictly positive for $1 \leq i \leq r$, there are only finitely many maps $f_w$ of this type. 
  This allows us to define for each $1 \leq k \leq L$ a seminorm $q_k$ on $\lcA_k$ with the following property: 
  For all $r\leq L$ and $w\in\N^r$ with $\abs{w} \leq L$ we obtain an estimate 
  \begin{equation}\label{eq: f_GN}
   p_{\abs{w}} \left(	f_w(b_1,\ldots,b_r)	\right) \leq q_{w_1}(b_1) \cdot q_{w_2}(b_2) \cdots q_{w_r}(b_r).
  \end{equation}
  Consider for each $N\leq L$ the continuous trilinear map
  \begin{displaymath}
   \func{g_N}{\lcA_0\times \lcA_N \times \lcA_0}{\lcA_N}{(c,d,e)}{c\cdot d\cdot e.}
  \end{displaymath}
  As there are only finitely many of these maps, we can define a seminorm $q_0$ on $\lcA_0$ and a seminorm $q^\sim_N$ on $\lcA_N$ for each $1 \leq N \leq L$ such that
  \begin{equation}\label{eq: g_GN}
   q_N\left(g_N(c,d,e) \right) \leq q_0(c) q^\sim_N(d) q_0(e) \hbox{ holds for }  1 \leq N \leq L.
  \end{equation}
  Enlarging the seminorm $q_0$, we may assume that $q_0\geq p_0$.

  Now we use the fact that the subalgebra $\lcA_0$ has the (GN)-property. 
  Hence, there is a continuous seminorm $q^\sim_0$ on $\lcA_0$ and a number $M_0\geq1$ such that for $n\in\N$ and elements $c_i \in \lcA_0,\ 1 \leq i \leq n$ with $q^\sim_0 (c_i) \leq 1$ we have
  \begin{equation}\label{eq: q_GN}
   q_0(c_1\cdots c_n)\leq M_0^n.
  \end{equation}
  Finally, we define the seminorm $Q$ via 
  \begin{displaymath}
   Q \coloneq \max_{0 \leq k \leq L} \left(q^\sim_k\circ \pi_k	\right).
  \end{displaymath}
  Clearly $Q$ is a continuous seminorm on $\lcA$. 
  Moreover, we set $M:=M_0^{2(L+1)}\cdot (L+1)$.
  \medskip
  
  \textbf{The seminorms $P$, $Q$ and the constant $M$ satisfy \eqref{eq:GNprop}.}
  Let $n\in\N$ and $(\overline a_1,\ldots,\overline a_n)\in A^n$ with $Q(\overline a_j)\leq 1$ be given. It remains to show that $P(\overline a_1 \cdots \overline a_n)\leq M^n$. Each element $\overline a_j$ can be written as a converging series
  \begin{displaymath}
   \overline a_j=\sum_{k=0}^\infty a_j^{(k)} \hbox{ with } a_j^{(k)}\in A_k.
  \end{displaymath}
  Plugging this representation into $P$, we obtain the estimate
  \begin{align*}
   P\left(	\overline a_1 \cdots \overline a_n	\right)
		  &	=	P\left(	\sum_{N=0}^\infty \sum_{\substack{\alpha\in\N_0^n \\ \abs{\alpha} = N}} a_1^{(\alpha_1)} \cdots a_n^{(\alpha_n)}	\right)
			=	\max_{0 \leq N \leq L} 	 p_N\left(	 \sum_{\substack{\alpha\in\N_0^n \\ \abs{\alpha} = N}} a_1^{(\alpha_1)} \cdots a_n^{(\alpha_n)}	\right)
		\\&	\leq	\max_{0 \leq N \leq L} 	 \sum_{\substack{\alpha\in\N_0^n \\ \abs{\alpha} = N}} p_N\left(	  a_1^{(\alpha_1)} \cdots a_n^{(\alpha_n)}	\right).
  \end{align*}
  For fixed $0 \leq N \leq L$ the number of summands in this sum is bounded from above by $(N+1)^n\leq (L+1)^n$ since for the entries of $\alpha$ there are at most $N+1$ choices. 
  We claim that each summand can be estimated as 
  \begin{equation}\label{eq: claim_GN}
   p_N\left(	  a_1^{(\alpha_1)} \cdots a_n^{(\alpha_n)}	\right)\leq \left(M_0^{2(L+1)}\right)^n.
  \end{equation}
  If this is true then one easily deduces that $P\left(	\overline a_1 \cdots \overline a_n	\right)\leq (L+1)^n\cdot (M_0^{2(L+1)})^n = M^n$ and the assertion follows.
  
  Hence we have to prove that \eqref{eq: claim_GN} holds. 
  To this end, fix $0 \leq  N \leq L$ and $\alpha\in\N_0^n$.
  \medskip 
  
  \noindent \textbf{Case $N=0$:} Then $\alpha=(0,\ldots,0)$ and we have
  \begin{align*}
		      p_0\left(	  a_1^{(0)} \cdots a_n^{(0)}	\right)   	\leq q_0\left(	  a_1^{(0)} \cdots a_n^{(0)}	\right)     	\stackrel{          \eqref{eq: q_GN} }{\leq}  M_0^n       	\leq \left(M_0^{2(L+1)}\right)^n.
  \end{align*}
  \textbf{Case $N\geq1$:} The product $a_1^{(\alpha_1)} \cdots a_n^{(\alpha_n)}$ may contain elements from the subalgebra $A_0$ and elements from the subspaces $A_k$ with $k\geq1$.
  Combining each element contained in $A_0$ with elements to the left or the right, we rewrite the product as
  \[
   a_1^{(\alpha_1)} \cdots a_n^{(\alpha_n) } =  b_1\cdots b_r
  \]
  for some $r\leq \min \{ n , L\}$. 
  Deleting all zeroes from $\alpha$, we obtain a multi-index $w \in \N^r$.
  Now by construction $b_k\in A_{w_k}$ is a product $b_k=c_k\cdot d_k \cdot e_k$, where each $d_k\in A_{w_k}$ is one of the $a_j$ and $c_k$ and $e_k$ are finite products of $\lcA_0$-factors in the original product.
  
  Since each $c_k$ is a product of at most $n$ elements of $\lcA_0$, all of which have $q_0^\sim$-norm at most $1$, we may apply \eqref{eq: q_GN} to obtain the estimate:
  \[
   q_0(c_k)\leq M_0^n.
  \]
  For the same reason, we have the corresponding estimate $q_0(e_k)\leq M_0^n$.

  Combining these results, we derive
  \begin{align*}
   p_N \left(a_1^{(\alpha_1)} \cdots a_n^{(\alpha_n) }\right)   &= p_N(  b_1\cdots b_r)  \stackrel{\eqref{eq: f_GN}}{\leq}   \prod_{k=1}^r q_{w_k}(b_k) =	\prod_{k=1}^r q_{w_k}(c_k d_k e_k)
	\\& \stackrel{\eqref{eq: g_GN}}{\leq}  	\prod_{k=1}^r \underbrace{q_0(c_k)}_{\leq M_0^n} \cdot \underbrace{q^\sim_{w_k}(d_k)}_{\leq1} \cdot \underbrace{q_0(e_k)}_{\leq M_0^n} \leq  \prod_{k=1}^r (M_0^2)^n =    (M_0^{2r})^n
	\\& \leq    \left(M_0^{2(L+1)}\right)^n\qedhere
  \end{align*}
 \end{proof}

\section{Characters on graded connected Hopf algebras}														\label{section: characters_on_graded_Hopf_algebras_are_Lie_groups}
 In this section we construct Lie group structures on character groups of (graded and connected) Hopf algebras.
 
 \begin{setup}	  
 Throughout this section, let $\Hopf=(\Hopf,m_\Hopf,u_\Hopf,\Delta_\Hopf,\epsilon_\Hopf,S_\Hopf)$ be a fixed Hopf algebra and let $\lcB$ be a fixed \emph{commutative} locally convex algebra.

 Using only the coalgebra structure of $\Hopf$, we obtain the locally convex algebra 
 \[
  \lcA\coloneq(\Hom_\K(\Hopf,\lcB),\star) \quad \text{(see \ref{setup: maps on coalgebra})}.
 \]
\end{setup}

 Note that our framework generalises the special case $\lcB=\K$ which is also an interesting case. 
 For example, the Hopf algebra of rooted trees (see Example \ref{ex: RT}) is a connected, graded Hopf algebra and its group of $\K$-valued characters turns out to be the Butcher group from numerical analysis (cf.\ Example \ref{ex: Butcher}).\medskip

 We will now consider groups of characters of Hopf algebras:
 \begin{defn}							\label{def: character}
  A linear map $\smfunc{\phi}{\Hopf}{\lcB}$ is called ($\lcB$-valued) \emph{character} if it is a homomorphism of unital algebras, i.e.
  \begin{equation}\label{eq char:char}
   \phi(a b) = \phi(a)\phi(b)\text{ for all } a,b \in \Hopf \text{ and } \phi(1_\Hopf)=1_\lcB.
  \end{equation}
  Another way of saying this is that $\phi$ is a character, if
  \begin{equation}\label{eq: char:abs}
   \phi\circ m_\Hopf = m_\lcB \circ (\phi\otimes \phi) \hbox{ and } \phi(1_\Hopf)=1_\lcB.
  \end{equation}
  The set of characters is denoted by $\Char{\Hopf}{\lcB}$. 
 \end{defn}

 \begin{lem}\label{lem: char:mult}
  The set of characters $\Char{\Hopf}{\lcB}$ is a closed subgroup of $(\lcA^\times,\star)$. With the induced topology, $\Char{\Hopf}{\lcB}$ is a topological group.
  Inversion in this group is given by the map $\phi \mapsto \phi \circ S_\Hopf$ and the unit element is $1_\lcA\coloneq u_\lcB\circ\epsilon_\Hopf \colon \Hopf \rightarrow \lcB , x \mapsto \epsilon_\Hopf (x) 1_\lcB$.
 \end{lem}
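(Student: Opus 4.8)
The plan is to establish the group-theoretic content by standard Hopf-algebraic identities and then check the two topological assertions separately, since $(\lcA^\times,\star)$ itself need not be a topological group. First I would show $\Char{\Hopf}{\lcB}\subseteq\lcA^\times$ by exhibiting an inverse. For a character $\phi$, using the antipode axiom $m_\Hopf\circ(\id\otimes S_\Hopf)\circ\Delta_\Hopf=u_\Hopf\circ\epsilon_\Hopf=m_\Hopf\circ(S_\Hopf\otimes\id)\circ\Delta_\Hopf$, the multiplicativity $\phi\circ m_\Hopf=m_\lcB\circ(\phi\otimes\phi)$, and $\phi(1_\Hopf)=1_\lcB$, one computes
\[
 \phi\star(\phi\circ S_\Hopf)=\phi\circ m_\Hopf\circ(\id\otimes S_\Hopf)\circ\Delta_\Hopf=\phi\circ u_\Hopf\circ\epsilon_\Hopf=u_\lcB\circ\epsilon_\Hopf=1_\lcA,
\]
and symmetrically $(\phi\circ S_\Hopf)\star\phi=1_\lcA$; hence $\phi$ is $\star$-invertible with $\phi^{-1}=\phi\circ S_\Hopf$.

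Next I would check the subgroup axioms. The unit $1_\lcA=u_\lcB\circ\epsilon_\Hopf$ is a character, because $\epsilon_\Hopf\colon\Hopf\to\K$ and $u_\lcB\colon\K\to\lcB$ are morphisms of unital algebras (the former by the bialgebra axioms). If $\phi,\psi$ are characters, then writing $\Delta_\Hopf(ab)=\sum_{(a),(b)}a_1b_1\otimes a_2b_2$ (compatibility of $\Delta_\Hopf$ with $m_\Hopf$) and using that $\phi,\psi$ are multiplicative and that $\lcB$ is commutative,
\[
 (\phi\star\psi)(ab)=\sum_{(a),(b)}\phi(a_1b_1)\psi(a_2b_2)=\sum_{(a),(b)}\phi(a_1)\psi(a_2)\,\phi(b_1)\psi(b_2)=(\phi\star\psi)(a)\,(\phi\star\psi)(b),
\]
while $(\phi\star\psi)(1_\Hopf)=\phi(1_\Hopf)\psi(1_\Hopf)=1_\lcB$ from $\Delta_\Hopf(1_\Hopf)=1_\Hopf\otimes1_\Hopf$; thus $\phi\star\psi\in\Char{\Hopf}{\lcB}$. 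Finally $\phi\circ S_\Hopf$ is again a character: $S_\Hopf$ is an algebra anti-homomorphism with $S_\Hopf(1_\Hopf)=1_\Hopf$, and composing it with $\phi$ into the \emph{commutative} algebra $\lcB$ turns the anti-homomorphism into a homomorphism. Combined with the first step, this shows $\Char{\Hopf}{\lcB}$ is a subgroup of $\lcA^\times$ with inversion $\phi\mapsto\phi\circ S_\Hopf$.

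It remains to establish the topological statements. For closedness, note that $\Char{\Hopf}{\lcB}$ is the intersection of the sets $\{\phi\mid\phi(ab)=\phi(a)\phi(b)\}$, taken over all $a,b\in\Hopf$, with $\{\phi\mid\phi(1_\Hopf)=1_\lcB\}$; each of these is closed in $\lcA=\Hom_\K(\Hopf,\lcB)$ with the topology of pointwise convergence, because the point evaluations $\phi\mapsto\phi(x)$ and the multiplication $m_\lcB$ are continuous and $\lcB$ is Hausdorff. Hence $\Char{\Hopf}{\lcB}$ is closed in $\lcA$, a fortiori in $\lcA^\times$. That it is a topological group is immediate from the explicit group operations: the restriction of $\star$ is continuous by \ref{setup: maps on coalgebra}, and inversion is the restriction of the continuous linear self-map $\psi\mapsto\psi\circ S_\Hopf$ of $\lcA$ (continuity for pointwise convergence holds since $(\psi\circ S_\Hopf)(x)=\psi(S_\Hopf(x))$ depends continuously on $\psi$). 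The only step that is not a formality is the displayed identity showing $\phi\star\psi$ is multiplicative; this is exactly where commutativity of $\lcB$ is used — to move $\psi(a_2)$ past $\phi(b_1)$ — and it is the crux of the lemma.
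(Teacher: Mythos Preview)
Your proof is correct and follows essentially the same line as the paper's, only with more detail: the paper cites \cite[Proposition II.4.1 3)]{Manchon} for the algebraic fact that characters form a group under convolution with inverse $\phi\mapsto\phi\circ S_\Hopf$, whereas you spell out the antipode computation and the multiplicativity of $\phi\star\psi$ explicitly. Your treatment of closedness (intersection of preimages of the diagonal under continuous evaluations) and of the topological-group structure (continuity of $\star$ from \ref{setup: maps on coalgebra}, continuity of $\psi\mapsto\psi\circ S_\Hopf$ for the pointwise topology) matches the paper's argument essentially verbatim.
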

 \begin{proof}
  The fact that the characters of a Hopf algebra form a group with respect to the convolution product is well-known, see for example \cite[Proposition II.4.1 3)]{Manchon}. Note that in loc.cit. this is only stated for a connected graded Hopf algebra although the proof does not use the grading at all.

  The closedness of $\Char{\Hopf}{\lcB}$ follows directly from Definition \ref{def: character} and the fact that we use the topology of pointwise convergence on $\lcA$. Continuity of the convolution product was shown in \ref{setup: maps on coalgebra}. Inversion is continuous as the precomposition with the antipode is obviously continuous with respect to pointwise convergence. 
 \end{proof}

 Our goal in this section is to turn the group of characters into a Lie group. Hence, we need a modelling space for this group. This leads to the following definition:

 \begin{defn}
  A linear map $\phi\in\Hom_\K(\Hopf,\lcB)$ is called an \emph{infinitesimal character} if
  \begin{equation}\label{eq: InfChar:char}
   \phi\circ m_\Hopf = m_\lcB\circ (\phi\otimes \epsilon_\Hopf + \epsilon_\Hopf \otimes \phi),
  \end{equation}
  which means for $a,b \in \Hopf$ that $\phi(a b)=\phi(a) \epsilon_\Hopf(b) + \epsilon_\Hopf(a) \phi(b)$.
  
  We denote by  $\InfChar{\Hopf}{\lcB}$ the set of all infinitesimal characters. 
 \end{defn}

  \begin{lem}								\label{lem: inf:subalg}
   The infinitesimal characters $\InfChar{\Hopf}{\lcB}$ form a closed Lie subalgebra of $(\lcA,[\cdot,\cdot])$, where $\LB{}$ is the commutator bracket of $(A,\star)$. 
  \end{lem}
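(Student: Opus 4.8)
The plan is to verify the two claims—that $\InfChar{\Hopf}{\lcB}$ is closed in $\lcA$, and that it is a Lie subalgebra of $(\lcA,\LB{})$—separately, both by elementary computations using only the coalgebra structure and the defining identity \eqref{eq: InfChar:char}.

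For closedness, I would argue exactly as in Lemma \ref{lem: char:mult}: since $\lcA = \Hom_\K(\Hopf,\lcB)$ carries the topology of pointwise convergence, the condition \eqref{eq: InfChar:char} read pointwise says that for each pair $a,b\in\Hopf$ the (continuous) linear evaluation $\phi\mapsto \phi(ab) - \phi(a)\epsilon_\Hopf(b) - \epsilon_\Hopf(a)\phi(b)$ vanishes. Thus $\InfChar{\Hopf}{\lcB}$ is an intersection of kernels of continuous linear functionals, hence a closed linear subspace.

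For the Lie subalgebra property, the only thing to check is that $\InfChar{\Hopf}{\lcB}$ is closed under the commutator bracket $\LB[\phi,\psi] = \phi\star\psi - \psi\star\phi$ (it is visibly a linear subspace). Here I would use Sweedler notation: for $a,b\in\Hopf$, the product $m_\Hopf$ and coproduct $\Delta_\Hopf$ satisfy the bialgebra compatibility $\Delta_\Hopf(ab) = \sum_{(a),(b)} a_1 b_1 \otimes a_2 b_2$, and the counit axiom gives $\sum_{(a)} \epsilon_\Hopf(a_1) a_2 = a = \sum_{(a)} a_1 \epsilon_\Hopf(a_2)$. Expanding $(\phi\star\psi)(ab) = \sum_{(a),(b)} \phi(a_1 b_1)\psi(a_2 b_2)$ and inserting \eqref{eq: InfChar:char} for both $\phi$ and $\psi$ produces four terms; using commutativity of $\lcB$ and the counit identities to collapse the Sweedler sums, one finds that the non-derivation terms are exactly the ones symmetric in $\phi\leftrightarrow\psi$, so they cancel in $(\phi\star\psi - \psi\star\phi)(ab)$, leaving $\LB[\phi,\psi](ab) = \LB[\phi,\psi](a)\epsilon_\Hopf(b) + \epsilon_\Hopf(a)\LB[\phi,\psi](b)$. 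Alternatively, and perhaps more cleanly, one can observe that $\InfChar{\Hopf}{\lcB}$ is precisely the set of $\epsilon_\Hopf$-derivations from $\Hopf$ to $\lcB$ (viewing $\lcB$ as an $\Hopf$-bimodule via $\epsilon_\Hopf$), and the commutator of two derivations into a commutative target along the same algebra homomorphism is again such a derivation—this is the standard fact whose proof is the bookkeeping just sketched.

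The only mild obstacle is the bracket computation: one has to be careful that the naive "commutator of derivations is a derivation" argument genuinely works here, since $\phi\star\psi$ is not literally a composition of $\phi$ and $\psi$ but a convolution, so the cancellation relies essentially on commutativity of $\lcB$ together with the counit axioms to kill the cross terms. I expect no real difficulty beyond keeping the Sweedler indices straight, and the whole proof should be a few lines.
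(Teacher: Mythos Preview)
Your proposal is correct and matches the paper's approach: the paper argues closedness exactly as you do (from the pointwise topology and the defining identity) and for the Lie subalgebra property simply cites \cite[Proposition II.4.2]{Manchon}, whose content is precisely the Sweedler computation you sketch. Your expansion of $(\phi\star\psi)(ab)$ into four terms and the observation that the cross terms $\phi(a)\psi(b)+\phi(b)\psi(a)$ are symmetric in $\phi\leftrightarrow\psi$ (hence cancel in the commutator, using commutativity of $\lcB$) is exactly the standard argument behind that reference.
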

  \begin{proof}
   As for $\Char{\Hopf}{\lcB}$, the closedness follows directly from the definition. The fact that the infinitesimal characters form a Lie subalgebra is well-known, see for example \cite[Proposition II.4.2]{Manchon}. 
   %Note that the infinitesimal characters are called \emph{derivations} in \cite{Manchon}.
  \end{proof}

  \begin{setup}								\label{setup: infcharIdealA}
   From now on we assume for the rest of this section that the Hopf algebra $\Hopf$ is \emph{graded and connected} (see \ref{setup: abstract grading}). 
   Thus by Lemma \ref{lem: grading and CIA} the locally convex algebra $\lcA=\Hom_\K(\Hopf,\lcB)$ is densely graded.
  
   Every infinitesimal character $\phi\in\InfChar{\Hopf}{\lcB}$ maps $1_\Hopf$ to $0_\K \cdot 1_\lcB$ since $\phi(1_\Hopf\cdot 1_\Hopf) = \phi(1_\Hopf) \epsilon_\Hopf(1_\Hopf) + \epsilon_\Hopf(1_\Hopf) \phi(1_\Hopf) = 2\phi (1_\Hopf)$.
   Now $\Hopf=\bigoplus_{n=0}\Hopf_n$ is assumed to be connected and we have $\Hopf_0=\K 1_\Hopf$, whence $\phi|_{\Hopf_0}=0$. 
   Translating this to the densely graded algebra $\lcA$, we observe $\InfChar{\Hopf}{\lcB}\subseteq \IdealA$.

   Similarly, a character maps $1_\Hopf$ to $1_\lcB$ by definition. Hence, $\Char{\Hopf}{\lcB}\subseteq 1_\lcA+\IdealA$. 
  \end{setup}

 \begin{thm}
 \label{thm: Char:Lie}
  Let $\Hopf$ be an abstract graded connected Hopf algebra $\Hopf$.
  For any commutative locally convex algebra $\lcB$, the group $\Char{\Hopf}{\lcB}$ of $\lcB$-valued characters of $\Hopf$ is a ($\K-$analytic) Lie group.
  
  Furthermore, we observe the following properties 
  \begin{itemize}
    \item[\textup{(i)}] The Lie algebra $\Lf (\Char{H}{\lcB})$ of $\Char{H}{\lcB}$ is the Lie algebra $\InfChar{\Hopf}{\lcB}$ of infinitesimal characters with the commutator bracket $\LB[\phi,\psi] = \phi \star \psi - \psi \star \phi$.
    \item[\textup{(ii)}] $\Char{\Hopf}{\lcB}$ is a BCH--Lie group which is exponential, i.e.~the exponential map is a global $\K$-analytic diffeomorphism and is given by the exponential series.
    \item[\textup{(iii)}] The model space of $\Char{\Hopf}{\lcB}$ is a \Frechet space whenever $\Hopf$ is of countable dimension (e.g.~a Hopf algebra of finite type) and $\lcB$ is a \Frechet space (e.g.~a Banach algebra or a finite  
     dimensional algebra).
     
     In the special case that $\lcB$ is a weakly complete algebra, the modelling space $\InfChar{\Hopf}{\lcB}$ is weakly complete as well.
    \end{itemize}
 \end{thm}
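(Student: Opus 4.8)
The plan is to realise $\Char{\Hopf}{\lcB}$ as a Lie group by transporting the linear structure of its prospective Lie algebra $\h \coloneq \InfChar{\Hopf}{\lcB}$ along the exponential series, working entirely inside the augmentation ideal. First recall the set-up: by Lemma \ref{lem: inf:subalg}, $\h$ is a closed vector subspace of the locally convex space $\lcA = \Hom_\K(\Hopf,\lcB)$, hence a locally convex space in its own right, and by \ref{setup: infcharIdealA} we have $\h \subseteq \IdealA$ and $\Char{\Hopf}{\lcB} \subseteq 1_\lcA + \IdealA$. The whole argument hinges on the observation that the augmentation ideal $\IdealA = \prod_{n \geq 1} \Hom_\K(\Hopf_n,\lcB)$ of the densely graded algebra $\lcA$ is \emph{pro-nilpotent}: since $\star$ (and hence every iterated commutator) strictly raises the grading degree, the degree-$N$ component of $\phi^{\star n}$ vanishes for $n > N$ and every $\phi \in \IdealA$. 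Therefore the exponential and logarithm series $\exp_\star$ and $\log_\star$ converge on $\IdealA$, resp.\ $1_\lcA + \IdealA$, \emph{without any completeness or CIA hypothesis on $\lcB$}, each homogeneous component of $\exp_\star(\phi)$, resp.\ $\log_\star(1_\lcA + \phi)$, being a \emph{polynomial} in finitely many homogeneous components of $\phi$; by Lemma \ref{lem: exp_and_log} they are mutually inverse, and the componentwise-polynomial description shows that they are $\K$-analytic, hence mutually inverse homeomorphisms $\IdealA \leftrightarrow 1_\lcA + \IdealA$.

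The one genuinely algebraic step — and the place where the hypotheses "$\Hopf$ graded connected" and "$\lcB$ commutative" enter — is the classical equivalence: for $\phi \in \IdealA$ one has $\phi \in \InfChar{\Hopf}{\lcB}$ if and only if $\exp_\star(\phi) \in \Char{\Hopf}{\lcB}$. I would prove this by noting that precomposition with $m_\Hopf$ is a continuous algebra homomorphism $\lcA \to \Hom_\K(\Hopf \otimes \Hopf,\lcB)$ into the convolution algebra of the coalgebra $\Hopf \otimes \Hopf$, respecting augmentation ideals (because $m_\Hopf$ is a morphism of coalgebras, homogeneous of degree $0$), hence commuting with $\exp_\star$; that $\exp_\star(\phi \otimes \epsilon_\Hopf) = \exp_\star(\phi) \otimes \epsilon_\Hopf$ and $\exp_\star(\epsilon_\Hopf \otimes \phi) = \epsilon_\Hopf \otimes \exp_\star(\phi)$; and that $\phi \otimes \epsilon_\Hopf$ and $\epsilon_\Hopf \otimes \phi$ convolution-commute in $\Hom_\K(\Hopf \otimes \Hopf,\lcB)$ \emph{precisely because $\lcB$ is commutative}, so that
\[
 \exp_\star(\phi) \circ m_\Hopf \;=\; \exp_\star\bigl((\phi \otimes \epsilon_\Hopf) + (\epsilon_\Hopf \otimes \phi)\bigr) \;=\; m_\lcB \circ (\exp_\star\phi \otimes \exp_\star\phi);
\]
comparing this with the defining identity $\phi \circ m_\Hopf = m_\lcB \circ (\phi \otimes \epsilon_\Hopf + \epsilon_\Hopf \otimes \phi)$ of an infinitesimal character (and using injectivity of $\exp_\star$, via $\log_\star$, for the reverse direction) gives the claim; cf.\ \cite[Ch.~II]{Manchon}. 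Consequently $\exp_\star$ restricts to a bijection $\h \to \Char{\Hopf}{\lcB}$ with inverse $\log_\star$, and — as restrictions of the homeomorphisms above — these are mutually inverse homeomorphisms for the subspace topologies. I take $\varphi \coloneq \log_\star|_{\Char{\Hopf}{\lcB}} \colon \Char{\Hopf}{\lcB} \to \h$ as a single global chart.

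It then remains to verify that, read through $\varphi$, the group operations of $\Char{\Hopf}{\lcB}$ (Lemma \ref{lem: char:mult}) are $\K$-analytic. Multiplication becomes $(\phi,\psi) \mapsto \log_\star(\exp_\star\phi \star \exp_\star\psi)$, which by pro-nilpotence equals the Baker--Campbell--Hausdorff series $\phi + \psi + \tfrac12 \LB[\phi,\psi] + \cdots$; in each grading degree this is a finite sum of iterated-bracket terms (a bracket monomial of length $> N$ has degree $> N$), so the series converges in $\lcA$ and each homogeneous component is a polynomial in finitely many homogeneous components of $(\phi,\psi)$, whence the map is $\K$-analytic into $\lcA$; its values lie in $\h$ because $\h$ is a \emph{closed} Lie subalgebra of $(\lcA,\LB{})$ (Lemma \ref{lem: inf:subalg}), which therefore contains every partial sum and the limit. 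Inversion becomes $\phi \mapsto \log_\star\bigl(\exp_\star(\phi)^{\star -1}\bigr) = \log_\star(\exp_\star(-\phi)) = -\phi$, which is trivially $\K$-analytic. Hence $(\Char{\Hopf}{\lcB}, \{\varphi\})$ is a $\K$-analytic Lie group modelled on $\h = \InfChar{\Hopf}{\lcB}$, and $\varphi$ induces exactly the topology of Lemma \ref{lem: char:mult} (see Appendix \ref{app: mfd} for the underlying calculus). This settles the main statement and item (i): the tangent space at $1_\lcA$ is $\InfChar{\Hopf}{\lcB}$, and since multiplication in the chart $\varphi$ is the BCH series, its quadratic term shows the Lie bracket is the commutator bracket $\LB[\phi,\psi] = \phi\star\psi - \psi\star\phi$. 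For (ii): $\varphi$ is by construction a global chart of exponential coordinates, so $\Char{\Hopf}{\lcB}$ is a BCH--Lie group, its Lie group exponential map is $\exp_\star|_{\h} \colon \InfChar{\Hopf}{\lcB} \to \Char{\Hopf}{\lcB}$, a global $\K$-analytic diffeomorphism with analytic inverse $\log_\star$, so the group is exponential with exponential map given by the exponential series. For (iii): by Lemma \ref{lem: completeness of A}(a), $\lcA \cong \lcB^{I}$ with $\abs{I} = \dim_\K\Hopf$, so $\h$ is a closed subspace of $\lcB^I$; if $\Hopf$ has countable dimension and $\lcB$ is \Frechet, then $\lcA$ is \Frechet by Lemma \ref{lem: completeness of A}(b), hence so is its closed subspace $\h$; and if $\lcB \cong \K^J$ is weakly complete, then $\lcA \cong \K^{I \times J}$ is weakly complete, and closed subspaces of weakly complete spaces are weakly complete (Appendix \ref{app: weakly complete}), so $\h$ is weakly complete.

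The main obstacle is the algebraic equivalence in the second paragraph: the Sweedler-calculus bookkeeping on $\Hopf \otimes \Hopf$ identifying $\exp_\star(\phi \otimes \epsilon_\Hopf)$ with $\exp_\star(\phi) \otimes \epsilon_\Hopf$, checking that $\phi \otimes \epsilon_\Hopf$ and $\epsilon_\Hopf \otimes \phi$ convolution-commute exactly when $\lcB$ is commutative, and that precomposition with $m_\Hopf$ intertwines $\exp_\star$ on the two convolution algebras. Everything else is transport of structure along $\exp_\star$, which is unproblematic once the pro-nilpotent convergence of $\exp_\star$, $\log_\star$ and the BCH series has been secured — and that step, crucially, requires no completeness of $\lcB$.
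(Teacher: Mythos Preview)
Your proof is correct and close in spirit to the paper's, but the two diverge at the point where analyticity of the group operations is established, and again in the computation of the Lie bracket.

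The paper realises $\Char{\Hopf}{\lcB}$ as a \emph{closed analytic submanifold} of the ambient space $\lcA$: since $\exp_\lcA \colon \IdealA \to 1_\lcA + \IdealA$ is a global $C^\omega_\K$-diffeomorphism and $\InfChar{\Hopf}{\lcB}$ is a closed linear subspace of $\IdealA$, its image $\Char{\Hopf}{\lcB}$ is a closed analytic submanifold of $1_\lcA + \IdealA \subseteq \lcA$. Multiplication is then the restriction of the continuous bilinear map $\star$ on $\lcA$, and inversion is the restriction of the continuous linear map $\phi \mapsto \phi \circ S_\Hopf$; restrictions of analytic maps to closed analytic submanifolds (with closed range) are analytic, so both operations are $C^\omega_\K$ without further work. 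By contrast, you work entirely inside the single chart $\log_\star$ and verify analyticity of the BCH series by hand, using that each homogeneous component is a continuous polynomial. Both routes are valid; the paper's is shorter because it offloads the work to the ambient bilinear structure, whereas yours is more self-contained and makes the BCH nature of the group explicit from the start (which pays off immediately for~(ii)).

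For the Lie bracket, you read it off from the quadratic term of the BCH series, which is clean. The paper instead computes the adjoint action $\Ad(\phi)\psi = \phi \star \psi \star (\phi \circ S_\Hopf)$ as the restriction of a continuous polynomial on $\lcA$, differentiates to get $\ad(\phi)\psi = \phi \star \psi + \psi \star (\phi \circ S_\Hopf)$, and then proves separately that $\phi \circ S_\Hopf = -\phi$ for infinitesimal characters (via $\exp_\lcA(\phi \circ S_\Hopf) = (\exp_\lcA \phi)^{-1} = \exp_\lcA(-\phi)$ and injectivity of $\exp_\lcA$). Your observation that inversion is $\phi \mapsto -\phi$ in the chart is exactly this last fact, reached more directly. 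One small remark: your claim that a map into the product $\lcA \cong \prod_n \lcA_n$ is $C^\omega_\K$ whenever each component is a continuous polynomial is correct but not entirely trivial in the real case (there exist maps into products that are componentwise $C^\omega_\R$ but not $C^\omega_\R$); the paper handles this in the proof of Lemma~\ref{lem: functional_caluculus} by noting that continuous real polynomials complexify.
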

 
 \begin{rem}
  Note that character groups of (graded and connected) Hopf algebras with values in locally convex algebras arise naturally in application in physics.
  Namely, in renormalisation of quantum field theories one considers characters with values in algebras of polynomials or the algebra of germs of meromorphic functions (see e.g.\ \cite{Manchon}).
 \end{rem}
 
 \newcommand{\EXPISO}{\mathrm{Exp}}
 \begin{proof}[Proof of Theorem \ref{thm: Char:Lie}]
  The locally convex algebra $\lcA=\Hom_\K(\Hopf,\lcB)$ is densely graded. Hence, by Lemma \ref{lem: exp_and_log}, the exponential series converges on the closed vector subspace $\IdealA$ and defines a $C^\omega_\K$-diffeomorphism:
  \[
   \func{\exp_\lcA}{\IdealA}{1_\lcA + \IdealA}{\phi}{\exp[\phi].}
  \]
  This implies that the closed vector subspace $\InfChar{\Hopf}{\lcB}$ is mapped to a closed analytic submanifold of $1_\lcA+\IdealA\subseteq\lcA$. 
  By Lemma \ref{lem: exp:bij}, we obtain a commutative diagram
  \begin{equation}\label{eq: comm_diag_exp}
      \begin{aligned}
      \xymatrix{
		  \IdealA \ar[rrr]^{\exp_\lcA}											& & &	1_\lcA + \IdealA 		\\
		  \InfChar{\Hopf}{\lcB} \ar[u]^{\subseteq} \ar[rrr]_{\EXPISO\coloneq\exp_\lcA|_{\InfChar{\Hopf}{\lcB}}^{\Char{\Hopf}{\lcB}}}		& & &	\Char{\Hopf}{\lcB} \ar[u]_{\subseteq}			}
      \end{aligned}
  \end{equation}
  which shows that the group $\Char{\Hopf}{\lcB}$ is a closed analytic submanifold of $1_{\lcA}+\IdealA\subseteq\lcA$.

  The group multiplication is a restriction of the continuous bilinear map $\smfunc{\star}{\lcA\times\lcA}{\lcA}$ to the analytic submanifold $\Char{\Hopf}{\lcB}\times \Char{\Hopf}{\lcB}$ and since the range space $\Char{\Hopf}{\lcB}\subseteq \lcA$ is closed this ensures that the restriction is analytic as well.
  Inversion in $\Char{\Hopf}{\lcB}$ is composition with the antipode map $\nnfunc{\Char{\Hopf}{\lcB}}{\Char{\Hopf}{\lcB}}{\phi}{\phi\circ S}$ (see Lemma \ref{lem: char:mult}). This is a restriction of the continuous linear (and hence analytic) map $\nnfunc{\lcA}{\lcA}{\phi}{\phi\circ S}$ to closed submanifolds in the domain and range and hence inversion is analytic as well. 
  This shows that $\Char{\Hopf}{\lcB}$ is an analytic Lie group.

  \begin{itemize}
   \item [(i)] 
    The map $\func{\EXPISO}{\InfChar{\Hopf}{\lcB}}{\Char{\Hopf}{\lcB}}{\phi}{\exp_\lcA(\phi)}$ is an analytic diffeomorphism.
    Hence, we take the tangent map at point $0$ in the diagram \eqref{eq: comm_diag_exp} to obtain: 

  \[
   \xymatrix{
		  \IdealA \ar[rrr]^{T_0\exp_\lcA}				& & &	\IdealA 		\\
		  \InfChar{\Hopf}{\lcB} \ar[u]^\subseteq \ar[rrr]_{T_0\EXPISO}	& & &	 T_{1_\lcA}(\Char{\Hopf}{\lcB})  \ar[u]_{\subseteq}			}
  \]
  By Lemma \ref{lem: exp:com} (b) we know that $T_0\exp_\lcA=\id_{\lcA}$ which implies $T_0\EXPISO=\id_{\InfChar{\Hopf}{\lcB}}$.
  Hence as locally convex spaces $\Lf(\Char{\Hopf}{\lcB})=T_{1_{\lcA}}(\Char{\Hopf}{\lcB}) = \InfChar{\Hopf}{\lcB}$.
  It remains to show that the Lie bracket on $T_{1_{\lcA}}(\Char{\Hopf}{\lcB})$ induced by the Lie group structure is the commutator bracket on $\InfChar{\Hopf}{\lcB}$. This is what we will show in the following:

  For a fixed $\phi\in \Char{\Hopf}{\lcB}$, the inner automorphism $\func{c_\phi}{\Char{\Hopf}{\lcB}}{\Char{\Hopf}{\lcB}}{\psi}{\phi\star\psi\star(\phi\circ S)}$ is a restriction of continuous linear map on the ambient space $\lcA$.
  Taking the derivative of $c_\phi$ we obtain the usual formula for the adjoint action
  \[
   \func{\Lf(c_\phi)=\Ad(\phi)}{\InfChar{\Hopf}{\lcB}}{\InfChar{\Hopf}{\lcB}}{\psi}{\phi\star\psi\star(\phi\circ S).}
  \]
  For fixed $\psi\in\InfChar{\Hopf}{\lcB}$, this formula can be considered as a restriction of a continuous polynomial in $\phi$. 
  We then define $\ad \coloneq T_{1_\lcA} \Ad (\cdot).\psi$ (cf.\ \cite[Example II.3.9]{MR2261066}) which yields the Lie bracket of $\phi$ and $\psi$ in $\InfChar{\Hopf}{\lcB}$:
  \begin{equation}\label{eq: LB}
   [\phi , \psi ] = \ad(\phi).\psi = \phi\star\psi + \psi\star (\phi\circ S).
  \end{equation}
  Recall that the antipode is an anti-coalgebra morphism by \cite[Proposition 1.3.1]{MR1381692}. 
  Thus the map $\lcA \rightarrow \lcA , f \mapsto f \circ S$ is an anti-algebra morphism which is continuous with respect to the topology of pointwise convergence. 
  We conclude for an infinitesimal character $\phi$ that  $\exp_\lcA (\phi \circ S) = \exp_\lcA (\phi) \circ S = (\exp_\lcA (\phi))^{-1}$, i.e.\ $\exp_\lcA (\phi \circ S)$ is the inverse of $\exp_\lcA (\phi)$ with respect to $\star$. 
  As $\exp_\lcA$ restricts to a bijection from $\InfChar{\Hopf}{\lcB}$ to $\Char{\Hopf}{\lcB}$ we derive from Lemma \ref{lem: exp:com} (a) that $\phi \circ S = - \phi$.
  This implies together with \eqref{eq: LB} that $\LB[\phi,\psi] = \phi \star \psi - \psi \star \phi$.
  
  \item[(ii)]
  We already know that the exponential series defines an analytic diffeomorphism $\smfunc{\EXPISO}{\InfChar{\Hopf}{\lcB}}{\Char{\Hopf}{\lcB}}$ between Lie algebra and Lie group. It only remains to show that $\EXPISO$ is the exponential function of the Lie group. To this end let $\phi\in\InfChar{\Hopf}{\lcB}$ be given. 
  The analytic curve
  \[
   \func{\gamma_{\phi}}{(\R,+)}{\Char{\Hopf}{\lcB}}{t}{\EXPISO(t \phi)}
  \]
  is a group homomorphism by Lemma \ref{lem: exp:com} (a) and we have $\gamma_\phi'(0)=\phi$ by Lemma \ref{lem: exp:com} (b).
  Hence by definition of a Lie group exponential function (see \cite[Definition II.5.1]{MR2261066}) we have $\exp(\phi)=\gamma_\phi(1)=\EXPISO(\phi)$.

  \item[(iii)]
  Assume that the underlying vector space of the algebra $\Hopf$ is of countable dimension.
  The Lie algebra $\InfChar{\Hopf}{\lcB}$ is a closed vector subspace of $\lcA$, the latter is \Frechet by Lemma \ref{lem: completeness of A} (b). 
  Hence, $\InfChar{\Hopf}{\lcB}$ is \Frechet as well.

  If $\lcB$ is weakly complete then $\InfChar{\Hopf}{\K}$ is a closed vector subspace of a weakly complete space by Lemma \ref{lem: completeness of A} (a). 
  Since closed subspaces of weakly complete spaces are again weakly complete (e.g.~\cite[Theorem A2.11]{MR2337107}), the assertion follows.\qedhere
 \end{itemize}
 \end{proof}

 We now turn to the question whether the Lie group constructed in Theorem \ref{thm: Char:Lie} is a \emph{regular} Lie group. 
 To derive the regularity condition, we need to restrict our choice of target algebras to \emph{Mackey complete} algebras. 
 Let us note first the following useful result on character groups.
 
 \begin{lem}\label{lem: Char:inCIA}
  Let $\Hopf$ be a graded and connected Hopf algebra and $\lcB$ be a commutative locally convex algebra. 
  Then $\Char{\Hopf}{\lcB}$ and $\InfChar{\Hopf}{\lcB}$ are contained in a closed subalgebra of $\lcA = (\Hom_\K (\Hopf , \lcB) , \star)$ which is a densely graded continuous inverse algebra.
 \end{lem}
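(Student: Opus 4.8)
The plan is to cut $\lcA$ down to a closed subalgebra on which the CIA property can be read off from the structure theory already used in Lemma~\ref{lem: grading and CIA}, while keeping all characters and infinitesimal characters inside it. The key observation is that the only obstruction to $\lcA = (\Hom_\K(\Hopf,\lcB),\star)$ itself being a CIA sits in its degree-zero part $\lcA_0 \cong \lcB$, and that by \ref{setup: infcharIdealA} both $\Char{\Hopf}{\lcB}$ and $\InfChar{\Hopf}{\lcB}$ are already constrained on $\Hopf_0 = \K 1_\Hopf$.

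First I would set
\[
 \lcA' \coloneq \setm{f \in \lcA}{f(1_\Hopf) \in \K 1_\lcB} = \K 1_\lcA \oplus \IdealA ,
\]
where $\IdealA = \setm{f\in\lcA}{f|_{\Hopf_0}=0}$ as in \ref{setup: infcharIdealA}. Since evaluation at $1_\Hopf$ is continuous and $\K 1_\lcB$ is a finite-dimensional, hence closed, subspace of $\lcB$, the set $\lcA'$ is closed in $\lcA$. It is a subalgebra: $1_\lcA = u_\lcB\circ\epsilon_\Hopf$ maps $1_\Hopf$ to $1_\lcB$, and for $f,g \in \lcA'$ grouplikeness of the unit, $\Delta_\Hopf(1_\Hopf) = 1_\Hopf \otimes 1_\Hopf$, gives $(f\star g)(1_\Hopf) = f(1_\Hopf)\,g(1_\Hopf) \in \K 1_\lcB$ because $(\lambda 1_\lcB)(\mu 1_\lcB) = \lambda\mu\,1_\lcB$.

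Next I would check that $\lcA'$ is a densely graded algebra: under the identification $\lcA \cong \prod_{n\ge 0}\lcA_n$ from \ref{setup: maps on coalgebra} and Lemma~\ref{lem: grading and CIA} one has $\lcA' \cong (\K 1_\lcB)\times\prod_{n\ge 1}\lcA_n$, so $\lcA'$ is, as a locally convex space, the product of the closed subspaces $\lcA'_0 \coloneq \K 1_\lcB \cong \K$ and $\lcA'_n \coloneq \lcA_n$ for $n\ge1$, and the convolution product restricts to these pieces with $\lcA'_i \star \lcA'_j \subseteq \lcA'_{i+j}$ exactly as on $\lcA$ (the only extra point being $\lcA'_0\star\lcA'_0\subseteq\lcA'_0$, which is the computation just made). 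Hence $\lcA'$ satisfies the axioms of \ref{setup: dense grading} and its degree-zero part $\K$ is trivially a CIA, so \ref{lem: unit_groups_of_graded_algebras} (applied exactly as in the proof of Lemma~\ref{lem: grading and CIA}(b)) shows $\lcA'$ is a densely graded CIA. Finally, by \ref{setup: infcharIdealA}, $\Char{\Hopf}{\lcB} \subseteq 1_\lcA + \IdealA \subseteq \lcA'$ and $\InfChar{\Hopf}{\lcB} \subseteq \IdealA \subseteq \lcA'$, which is the assertion. I do not anticipate a genuine obstacle here; the only thing requiring a little care is verifying that $\lcA'$ really carries a dense grading in the precise sense of \ref{setup: dense grading}, everything else being immediate, and the conceptual content is just that one may replace $\lcA_0 = \lcB$ by $\K 1_\lcB$ without losing any (infinitesimal) character.
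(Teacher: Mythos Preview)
Your proposal is correct and takes essentially the same approach as the paper: your subalgebra $\lcA' = \K 1_\lcA \oplus \IdealA$ is exactly the paper's $\Lambda = (\K 1_{\lcA_0}) \times \prod_{n\ge 1}\lcA_n$, and the containments $\Char{\Hopf}{\lcB}, \InfChar{\Hopf}{\lcB} \subseteq \lcA'$ are deduced from \ref{setup: infcharIdealA} in both cases. The only cosmetic difference is that the paper isolates the verification that $\Lambda$ is a closed densely graded CIA as a general statement about densely graded algebras (Lemma~\ref{lemma: red2CIA}), whereas you carry it out directly in the concrete situation; your explicit description via the condition $f(1_\Hopf)\in\K 1_\lcB$ is a nice touch that makes closedness immediate.
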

 
 \begin{proof}
  Consider the closed subalgebra $\Cut{\lcA} \coloneq (\K 1_{\lcA_0}) \times \prod_{n\in \N} \lcA_n$.
  By Lemma \ref{lemma: red2CIA} this algebra is densely graded (with respect to the grading induced by $\lcA$) and a continuous inverse algebra.
  Note that $\IdealA = \ker \pi_0$ and $\pi_0 (1_\lcA + \IdealA) = 1_{\lcA}$ imply that $\IdealA$ and $1_{\lcA} + \IdealA$ are contained in $\Cut{\lcA}$.
  Thus \ref{setup: infcharIdealA} shows that $\Cut{\lcA}$ contains $\Char{\Hopf}{\lcB}$ and $\InfChar{\Hopf}{\lcB}$.
 \end{proof}

  \begin{thm}													\label{thm: Char_regular}
   Let $\Hopf$ be a graded and connected Hopf algebra and $\lcB$ be a Mackey complete locally convex algebra. 
   \begin{itemize}
    \item [\textup{(a)}] The Lie group $\Char{\Hopf}{\lcB}$ is $C^1$-regular.
    \item [\textup{(b)}] If in addition, $\lcB$ is sequentially complete, then $\Char{\Hopf}{\lcB}$ is even $C^0$-regular.
   \end{itemize}
   In both cases, the associated evolution map is even a $\K$-analytic map. 
   \end{thm}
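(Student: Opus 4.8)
The plan is to realise $\Char{\Hopf}{\lcB}$ as a closed submanifold subgroup of a $C^1$-regular ambient Lie group and then to show that the ambient evolution of a curve in $\InfChar{\Hopf}{\lcB}$ never leaves the character group. The natural ambient group is $\Cut{\lcA}^\times$, where $\Cut{\lcA}$ is the densely graded continuous inverse algebra supplied by Lemma~\ref{lem: Char:inCIA}.

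First I would check that $\Cut{\lcA}$ meets the hypotheses of the regularity machinery used in the proof of Theorem~\ref{thm: unit_group_regular}. It is densely graded with $\Cut{\lcA}_0 = \K 1_{\lcA_0} \cong \K$; since $\K$ is a commutative continuous inverse algebra, in particular locally m-convex, it has the (GN)-property, hence so does $\Cut{\lcA}$ by Lemma~\ref{lem: densely_GN}. As a locally convex space $\Cut{\lcA}$ is isomorphic to a product of copies of $\K$ and of $\lcB$, so it is Mackey complete, and sequentially complete if $\lcB$ is, by the argument of Lemma~\ref{lem: completeness of A}. Consequently \ref{setup: GN_regular} applies and shows that $\Cut{\lcA}^\times$ is $C^1$-regular, and $C^0$-regular when $\lcB$ is sequentially complete, with $\K$-analytic evolution map. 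By Lemma~\ref{lem: char:mult} and Lemma~\ref{lem: Char:inCIA} the group $\Char{\Hopf}{\lcB}$ is a closed subgroup of $\Cut{\lcA}^\times$, and by Theorem~\ref{thm: Char:Lie} it is a closed analytic submanifold with Lie algebra $\InfChar{\Hopf}{\lcB}$; in particular it is a closed Lie subgroup.

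The core step is the claim that for every $C^r$-curve $\gamma \colon [0,1] \to \InfChar{\Hopf}{\lcB}$ the evolution $\eta \coloneq \Evol_{\Cut{\lcA}^\times}(\gamma)$, determined by $\eta' = \eta \star \gamma$ and $\eta(0) = 1_\lcA$, takes its values in $\Char{\Hopf}{\lcB}$. Projecting the evolution equation to the degree-zero component and using $\InfChar{\Hopf}{\lcB} \subseteq \IdealA = \ker \pi_0$ (see \ref{setup: infcharIdealA}) gives $(\pi_0 \circ \eta)' = \pi_0(\eta)\,\pi_0(\gamma) = 0$, hence $\eta(t)(1_\Hopf) = 1_\lcB$ for all $t$. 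For the multiplicativity condition $\eta(t) \circ m_\Hopf = m_\lcB \circ (\eta(t) \otimes \eta(t))$ I would pass to the convolution algebra $\lcAt \coloneq \Hom_\K(\Hopf \otimes \Hopf, \lcB)$ of the (graded, connected) tensor coalgebra $\Hopf \otimes \Hopf$ and compare the two curves $u(t) \coloneq \eta(t) \circ m_\Hopf$ and $v(t) \coloneq m_\lcB \circ (\eta(t) \otimes \eta(t))$ in $\lcAt$. Both are $C^1$ into $\lcAt$ — the first because precomposition with $m_\Hopf$ is continuous linear, the second because $\phi \mapsto m_\lcB \circ (\phi \otimes \phi)$ is continuous quadratic — and $u(0) = v(0) = 1_{\lcAt}$ since $\epsilon_\Hopf$ is an algebra homomorphism. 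As $m_\Hopf$ is a morphism of coalgebras (the bialgebra axiom), precomposition with $m_\Hopf$ is a homomorphism of convolution algebras $\lcA \to \lcAt$, whence $u' = u \star (\gamma \circ m_\Hopf)$; and since $\gamma(t)$ is an infinitesimal character, $\gamma(t) \circ m_\Hopf = m_\lcB \circ (\gamma(t) \otimes \epsilon_\Hopf + \epsilon_\Hopf \otimes \gamma(t))$. Writing $\delta(t)$ for this map (which has no degree-zero part), a short computation in Sweedler notation shows $v' = v \star \delta$ as well. Since the linear initial value problem $\sigma' = \sigma \star \delta$, $\sigma(0) = 1_{\lcAt}$, has a unique solution in the densely graded algebra $\lcAt$ — solve recursively over the homogeneous degrees, using that $\delta$ raises degree — we conclude $u \equiv v$, that is, $\eta(t) \in \Char{\Hopf}{\lcB}$.

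Finally I would assemble the pieces. Since $\Char{\Hopf}{\lcB}$ is a closed submanifold of $\Cut{\lcA}^\times$ and the ambient evolution of any curve in $\InfChar{\Hopf}{\lcB}$ stays inside it, the corestricted curve is the required $C^{r+1}$-solution of the evolution equation in $\Char{\Hopf}{\lcB}$, and $\evol_{\Char{\Hopf}{\lcB}}$ arises from $\evol_{\Cut{\lcA}^\times}$ by restricting to the closed subspace $C^r([0,1], \InfChar{\Hopf}{\lcB})$ and corestricting to the closed submanifold $\Char{\Hopf}{\lcB}$, hence is $\K$-analytic. This yields $C^1$-regularity in general and $C^0$-regularity when $\lcB$ is sequentially complete. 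I expect the main obstacle to be the identity $v' = v \star \delta$ — the only place that forces the detour through $\Hopf \otimes \Hopf$ and the only place where the commutativity of $\lcB$ is genuinely used; the remainder is bookkeeping with structures already in place.
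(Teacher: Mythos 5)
Your proposal is correct, and its outer architecture is the same as the paper's: realise $\Char{\Hopf}{\lcB}$ as a closed BCH--Lie subgroup of the unit group $\Cut{\lcA}^\times$ of the densely graded CIA from Lemma \ref{lem: Char:inCIA}, get $C^1$- resp.\ $C^0$-regularity of $\Cut{\lcA}^\times$ from the (GN)-property via Lemma \ref{lem: densely_GN}, \ref{setup: GN_commutative_locally_m_convex} and \ref{setup: GN_regular}, prove that the ambient evolution of a curve in $\InfChar{\Hopf}{\lcB}$ stays in the character group, and then factor the evolution map. Where you genuinely differ is in that semiregularity step: the paper argues element-wise, introducing $F(t,x,y)=\eta(t)(xy)-\eta(t)(x)\eta(t)(y)$, computing $\partial_t F_{x,y}$ in Sweedler notation and killing $F$ by the fundamental theorem of calculus plus an induction over the degrees of homogeneous $x,y$ (with an idempotent argument for $\eta(t)(1_\Hopf)=1_\lcB$), whereas you work globally in $\lcAt=\Hom_\K(\Hopf\otimes\Hopf,\lcB)$, in the spirit of the paper's own Lemma \ref{lem: exp:bij}: the curves $\eta\circ m_\Hopf$ and $m_\lcB\circ(\eta\otimes\eta)$ solve the same linear initial value problem $\sigma'=\sigma\star\delta$ with $\delta=\gamma\circ m_\Hopf\in\cI_{\lcAt}$ (Lemma \ref{lem: dual_comult} for the first, identity \eqref{eq: multifalt} -- the only use of commutativity of $\lcB$, as you note -- for the second), and uniqueness follows componentwise from the dense grading since a $C^1$-curve with vanishing derivative into a Hausdorff locally convex space is constant; your $\pi_0$-projection likewise replaces the idempotent argument. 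Your route is coordinate-free, needs no completeness in the uniqueness step, and is essentially a differential-equation analogue of Lemma \ref{lem: exp:bij}; the paper's induction is more explicit and makes visible how the argument is tied to homogeneity (cf.\ the Problem following Corollary \ref{cor: ann:reg}). Both yield complete proofs.
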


  \begin{proof}
   We have seen in Lemma \ref{lem: Char:inCIA} that $\Char{\Hopf}{\lcB}$ and $\InfChar{\Hopf}{\lcB}$ are contained in a closed subalgebra $\Lambda$ of $\lcA = (\Hom_\K (\Hopf,\lcB), \star)$.
   By construction $\Lambda$ is a densely graded CIA.
   Note that $\Lambda$ inherits all completeness properties from $\lcB$ since it is closed in $\lcA$ and this space inherits its completeness properties from $\lcB$ by Lemma \ref{lem: completeness of A}. 
   Combine Lemma \ref{lem: char:mult} and Lemma \ref{lem: unit_groups_of_graded_algebras} (a) to see that $\Char{\Hopf}{\lcB}$ is a closed subgroup of the group of units of the CIA $\Lambda$.
   Theorem \ref{thm: Char:Lie} and \ref{thm: glockner_CIA_BCH} show that $\Lambda^\times$ and $\Char{\Hopf}{\lcB}$ are BCH--Lie groups and thus \cite[Theorem IV.3.3]{MR2261066} entails that $\Char{\Hopf}{\lcB}$ is a closed Lie subgroup of $\Lambda^\times$.
   
   Since $\Lambda_0 \cong \K$ (cf.\ Lemma \ref{lemma: red2CIA}) is a commutative CIA, we deduce from \ref{setup: GN_commutative_locally_m_convex} that $\Lambda_0$ has the (GN)-property.
   Hence Lemma \ref{lem: densely_GN} shows that also $\Lambda$ has the (GN)-property.  
   We deduce from \ref{setup: GN_regular} that the unit group $\Lambda^\times$ is a $C^1$-regular Lie group which is even $C^0$-regular if $\lcB$ is sequentially complete. 
   For the rest of this proof, we fix $k\in\smset{0,1}$ and assume that $\lcB$ is sequentially complete in the case $k=0$. 
   Let us prove now that the $C^k$-regularity of $\Lambda^\times$ implies that $\Char{\Hopf}{\lcB}$ is $C^k$-regular.

   Our first goal is to show that $\Char{\Hopf}{\lcB}$ is $C^k$-semiregular, i.e.\ that every $C^k$-curve into the Lie algebra $\InfChar{\Hopf}{\lcB}$ admits a $C^{k+1}$-evolution in the character group (cf.\ \cite{1208.0715v3}). 
   To this end, let us recall the explicit regularity condition for the unit group.\medskip
   
   \textbf{Step 1: The initial value problem for $C^k$-regularity in the group $\Lambda^\times$.}
   The Lie group $\Lambda^\times$ is open in the CIA $\Lambda$. 
   Take the canonical identification $T\Lambda^\times \cong \Lambda^\times \times \Lambda$. 
   Recall that the group operation of $\Lambda^\times$ is the restriction of the bilinear convolution $\star \colon \Lambda \times \Lambda \rightarrow \Lambda$.
   Consider for $\theta \in \Lambda^\times$ the left translation $\lambda_\theta (h) \coloneq \theta \star h, h \in \Lambda^\times$. 
   Then the identification of the tangent spaces yields $T_{1_\Lambda} \lambda_\theta (X) = \theta \star X$ for all $X \in T_{1_\Lambda} \Lambda^\times = \Lambda$. 
   Summing up, the initial value problem associated to $C^k$-regularity of $\Lambda^\times$ becomes 
     \begin{equation}\label{eq: diff}
     \begin{cases}
      \eta' (t) &= \eta (t) . \gamma (t) = T_{1_\Lambda} \lambda_{\eta (t)} (\gamma (t)) = \eta(t) \star \gamma (t) \quad t \in [0,1],\\
      \eta (0) &= 1_\Lambda
     \end{cases}
   \end{equation}
   where $\gamma \in C^k ([0,1], \Lambda)$. \medskip
   
   Fix $\gamma \in C^k ([0,1] , \InfChar{\Hopf}{\lcB})$. 
   Now $\InfChar{\Hopf}{\lcB} \subseteq \Lambda$ holds and $\Lambda^\times$ is $C^k$-regular. 
   Thus $\gamma$ admits a $C^{k+1}$-evolution $\eta$ in $\Lambda^\times$, i.e.\ $\eta \colon [0,1] \rightarrow \Lambda^\times$ is of class $C^{k+1}$ and solves \eqref{eq: diff} with respect to $\gamma$.
   We will now show that $\eta$ takes its values in $\Char{\Hopf}{\lcB}$.\medskip
   
   \textbf{Step 2: An auxiliary map to see that $\Char{\Hopf}{\lcB}$ is $C^k$-semiregular.}
   Consider 
   \begin{displaymath}
    F \colon [0,1] \times \Hopf \times \Hopf \rightarrow \lcB , (t,x,y) \mapsto \eta (t) (xy) - \eta (t) (x) \eta (t) (y).
   \end{displaymath}
   If $F$ vanishes identically, the evolution $\eta$ is multiplicative for each fixed $t$. 
   Note that as $\eta$ is a $C^{k+1}$-curve and $\Lambda$ carries the topology of pointwise convergence, for each $(x,y) \in \Hopf \times \Hopf$ the map $F_{x,y} \coloneq F (\cdot , x,y) \colon [0,1] \rightarrow \lcB$ is a $C^{k+1}$-map.
   Furthermore, $F(0, \cdot, \cdot) \equiv 0$ as $\eta (0) = 1_\Lambda \in \Char{\Hopf}{\lcB}$. 
   Thus for each pair $(x,y) \in \Hopf \times \Hopf$ the fundamental theorem of calculus yields 
    \begin{equation}\label{eq: funthm}
     F(t,x,y) = F_{x,y} (t) = \int_0^t \frac{\partial}{\partial t} F_{x,y} (t) \dd t.
    \end{equation}
  To evaluate this expression we compute the derivative of $F_{x,y}$ as
    \begin{equation}
    \begin{aligned}
     \frac{\partial}{\partial t} F_{x,y} (t) 
		&\stackrel{\hphantom{\eqref{eq: diff}}}{=} \frac{\partial}{\partial t} \eta (t) (xy) - \left(\frac{\partial}{\partial t} \eta (t) (x)\right) \eta (t) (y) - \left(\frac{\partial}{\partial t} \eta (t) (y)\right) \eta (t) (x) \\ 
		&\stackrel{\eqref{eq: diff}}{=} [\eta (t) \star \gamma (t)] (xy) - [\eta (t) \star \gamma (t)] (x) \eta (t) (y) - [\eta (t) \star \gamma (t)] (y) \eta (t) (x).
     \end{aligned} \label{eq: diff:aux} 
    \end{equation}
  In the following formulae, abbreviate $\eta_t \coloneq \eta (t)$ and $\gamma_t \coloneq \gamma (t)$ to shorten the notation.
  We use Sweedler's sigma notation to write $\Delta_\Hopf (x) = \sum_{(x)} x_{1}\otimes x_{2}$ and $\Delta_\Hopf (y) = \sum_{(y)} y_{1} \otimes y_{2}$.
  As $\Delta_\Hopf$ is an algebra homomorphism, the convolution in \eqref{eq: diff:aux} can then be rewritten as 
    \begin{equation}\label{eq: diff:aux2} 
    \frac{\partial}{\partial t} F_{x,y} (t) = \sum_{(x)(y)} \eta_t (x_1y_1) \gamma_t (x_2y_2) - \sum_{(x)} \eta_t (x_1) \gamma_t (x_2) \eta_t (y) - \sum_{(y)} \eta_t (y_1) \gamma_t (y_2) \eta_t (x) .
    \end{equation}
  Recall that the curve $\gamma$ takes its values in the infinitesimal characters, whence we have the identity $\gamma_t (ab) = \epsilon (a) \gamma_t (b) + \epsilon (b) \gamma_t (a)$.
  Plugging this into the first summand in \eqref{eq: diff:aux2} and using that $\eta_t$ is linear for all $t$ we obtain the identity 
    \begin{equation} \begin{aligned}
                       \sum_{(x)} \sum_{(y)} \eta_t (x_1y_1) \gamma_t (x_2y_2)& = \sum_{(x)} \sum_{(y)} (\eta_t (\epsilon (x_2)x_1y_1) \gamma_t (y_2) +  \eta_t (x_1 (\epsilon (y_2)y_1)) \gamma_t (x_2)) \\ 
									  & = \sum_{(y)} \eta_t (xy_1)\gamma_t (y_2) + \sum_{(x)} \eta_t (x_1y) \gamma_t (x_2).
                     \end{aligned}\label{eq: diff:aux3} 
     \end{equation}
 As $\lcB$ is commutative inserting \eqref{eq: diff:aux3} into \eqref{eq: diff:aux2} yields 
    \begin{equation}\label{eq: diff:auxfin}
     \frac{\partial}{\partial t} F_{x,y} (t) = \sum_{(y)} (\eta_t (xy_1) - \eta_t (x)\eta_t(y_1)) \gamma_t (y_2) + \sum_{(x)} (\eta_t (x_1y) - \eta_t (x_1)\eta_t(y)) \gamma_t (x_2).
    \end{equation}
 Since $\eta_t$ is linear for each fixed $t$ it suffices to check that $\eta_t$ is multiplicative for all pairs of elements in a set spanning the vector space $\Hopf$. 
 As $\Hopf$ is graded, the homogeneous elements span the vector space $\Hopf$.
 We will now use the auxiliary mapping $F$ and its partial derivative to prove that the evolution $\eta_t$ is multiplicative on all homogeneous elements in $\Hopf$ (whence on all elements in $\Hopf$). 
 \medskip
   
 \textbf{Step 3: The evolution $\eta (t)$ is multiplicative on $\Hopf_0$ and maps $1_\Hopf$ to $1_\lcB$.} \\
  The Hopf algebra $\Hopf$ is graded and connected, i.e.\ $\Hopf = \bigoplus_{n \in \N_0} \Hopf_n$ and $\Hopf_0 = \K 1_\Hopf$. 
  By construction this entails $\Delta_\Hopf (\Hopf_0) \subseteq \Hopf_0 \otimes \Hopf_0$ and the infinitesimal character $\gamma$ vanishes on $\Hopf_0$. 
  Thus for $x,y \in \Hopf_0$ we have $\frac{\partial}{\partial t} F_{x,y} (t) = 0$ for all $t \in [0,1]$ by \eqref{eq: diff:auxfin}. 
  We conclude from \eqref{eq: funthm} for $x,y \in \Hopf_0$ the formula  
 \begin{displaymath}
  \eta(t) (xy) - \eta (t) (x)\eta (t) (y) = F(t,x,y) = 0 \quad \forall t \in [0,1] .
 \end{displaymath}
 Hence $\eta (t) (xy) = \eta (t) (x) \eta (t) (y)$ for all elements in degree $0$. 
 Specialising to $x= 1_\Hopf = y$ we see that $\eta (t) (1_\Hopf)$ is an idempotent in the CIA $\lcB$. 
 Furthermore, since $\eta (t) \in \Lambda^\times$, we have $1_{\lcB} = 1_\Lambda (1_\Hopf) = \eta (t) \star (\eta (t))^{-1} (1_\Hopf)$, whence $\eta (t) (1_\Hopf) \in \lcB^\times$ for all $t \in [0,1]$. 
 As $\lcB^\times$ is a group it contains only one idempotent, i.e.\ $\eta (t) (1_\Hopf) = 1_{\lcB}$.
 \medskip
 
 \textbf{Step 4: The evolution $\eta (t)$ is multiplicative for all homogeneous elements.}
 As $\Hopf$ is connected, $\eta_t$ is linear and $\eta_t (1_\Hopf) = 1_\lcB$ holds, we see that \eqref{eq: diff:auxfin} vanishes if either $x$ or $y$ are contained in $\Hopf_0$. 
 We conclude from \eqref{eq: funthm} that 
  \begin{displaymath}
   \eta_t (xy) = \eta_t (x) \eta_t (y)\ \forall t \in [0,1] \text{ if } x \text{ or } y \text{ are contained in degree } 0
  \end{displaymath}
 Denote for a homogeneous element $x \in \Hopf$ by $\deg x$ its degree with respect to the grading.
 To prove that $\eta_t$ is multiplicative for elements of higher degree and $t \in [0,1]$ we proceed by induction on the sum of the degrees of $x$ and $y$.
 Having established multiplicativity of $\eta_t$ if at least one element is in $\Hopf_0$, we have already dealt with the cases $\deg x + \deg y \in \{0,1\}$. 
 \medskip
 
 \emph{Induction step for $\deg x + \deg y \geq 2$.} We assume that for homogeneous elements $a,b$ with $\deg a + \deg b \leq \deg x + \deg y -1$ the formula $\eta_t (ab) = \eta_t (a)\eta_t (b)$ holds.
  
 Since $\Hopf$ is connected, for each $z \in \Hopf_n$ with $n \geq 1$ the coproduct can be written as 
  \begin{displaymath}
   \Delta_\Hopf (z) = z \otimes 1_\Hopf + 1_\Hopf \otimes z + \tilde{\Delta} (z)
  \end{displaymath}
 where $\tilde{\Delta} (z) = \sum_{(\widetilde{z})} \tilde{z}_1 \otimes \tilde{z}_2 \in \epsilon_\Hopf^{-1} (0) \otimes \epsilon_\Hopf^{-1} (0)$ is the reduced coproduct. 
 Note that by construction the elements $\tilde{z}_1, \tilde{z}_2$ are homogeneous of degree strictly larger than $0$.
 Let us plug this formula for the coproduct into \eqref{eq: diff:auxfin}.
 We compute for the first sum in \eqref{eq: diff:auxfin}: 
  \begin{equation}\begin{aligned}
  &\sum_{(y)} (\eta_t (xy_1) - \eta_t (x)\eta_t(y_1)) \gamma_t (y_2) \\
  =& (\eta_t (xy)-\eta_t (x) \eta_t (y))\underbrace{\gamma_t(1_\Hopf)}_{=0} + \underbrace{(\eta_t (x1_\Hopf) -\eta_t (x) \eta_t (1_\Hopf))}_{=0}\gamma_t (y)\\
  & \hphantom{(\eta_t (xy)-\eta_t (x) \eta_t (y))\underbrace{\gamma_t(1_\Hopf)}_{=0}} + \sum_{\widetilde{(y)}} (\eta_t (x\tilde{y}_1) - \eta_t (x)\eta_t(\tilde{y}_1)) \gamma_t (\tilde{y}_2)\\
  =& \sum_{\widetilde{(y)}} \underbrace{(\eta_t (x\tilde{y}_1) - \eta_t (x)\eta_t(\tilde{y}_1))}_{C_{x,\tilde{y}_1} \coloneq} \gamma_t (\tilde{y}_2) \end{aligned} \label{eq: hom:weg}
  \end{equation}
 By construction we have $\deg \tilde{y}_1 , \deg \tilde{y}_2 \in [1,\deg y -1]$. 
 Then $\deg \tilde{y}_1 < \deg y$ implies $\deg x + \deg \tilde{y}_1 < \deg x +\deg y$ and thus $C_{x,\tilde{y}_1}$ vanishes by the induction assumption.
 
 As the two sums in \eqref{eq: diff:auxfin} are symmetric, interchanging the roles of $x$ and $y$ together with an analogous argument as above shows that also the second sum vanishes. 
 Hence, arguing as in Step 3, we see that $\eta_t (xy) = \eta_t (x)\eta_t (y)$ holds for all homogeneous elements $x,y \in \Hopf$. 
 
 In conclusion, the evolution $\eta \colon [0,1] \rightarrow \Lambda^\times$ of $\gamma$ takes its values in the closed subgroup $\Char{\Hopf}{\lcB}$ and thus $\Char{\Hopf}{\lcB}$ is $C^k$-semiregular.  
 \medskip 
 
 \textbf{Step 5: $\Char{\Hopf}{\lcB}$ is $C^k$-regular.} 
 Let $\iota \colon \InfChar{\Hopf}{\lcB} \rightarrow \Lambda$ be the canonical inclusion mapping. 
 Consider $\iota_* \colon C^k([0,1],  \InfChar{\Hopf}{\lcB}) \rightarrow ([0,1],\Lambda), c \mapsto \iota \circ c$.
 As $\iota$ is continuous and linear the map $\iota_*$ is continuous and linear by \cite[Lemma 1.2]{MR2997582}, whence smooth and even $\K$-analytic. 
 Let $\evol_\Lambda \colon C^k([0,1], \Lambda) \rightarrow \Lambda^\times$ be the (smooth) evolution map of the $C^k$-regular Lie group $\Lambda^\times$. 
 Then the map 
  \begin{displaymath}
   \evol \colon C^k([0,1],  \InfChar{\Hopf}{\lcB}) \rightarrow \Lambda^\times , \evol_{\Lambda^\times} \circ \iota_*
  \end{displaymath}
 is $\K$-analytic by Theorem \ref{thm: unit_group_regular} and maps a $C^k$-curve in the Lie algebra of $\Char{\Hopf}{\lcB}$ to its time $1$ evolution. 
 As the closed subgroup $\Char{\Hopf}{\lcB}$ is $C^k$-semiregular by Step 4, $\evol$ factors through a $\K$-analytic map 
 % \begin{displaymath}
  $ \evol_{\Char{\Hopf}{\lcB}} \colon C^k([0,1],  \InfChar{\Hopf}{\lcB}) \rightarrow \Char{\Hopf}{\lcB}$.
 % \end{displaymath}
 Summing up, $\Char{\Hopf}{\lcB}$ is $C^k$-regular and the evolution map is $\K$-analytic.
  \end{proof}

\section{Subgroups associated to Hopf ideals}

So far, we were only able to turn the character group of a graded connected Hopf algebra $\Hopf$ into a Lie group. In this section, we will show that the character group of a quotient $\Hopf/\HIdeal$ can be regarded as a closed Lie subgroup of the character group of $\Hopf$ and thus carries a Lie group structure as well. It should be noted that this does not imply that the character group of \emph{every} Hopf algebra can be endowed with a Lie group structure (see Example \ref{ex: group algebra}).

\begin{defn}[Hopf ideal] \label{defn: Hideal}
Let $\Hopf$ be a Hopf algebra. 
We say $\HIdeal\subseteq \Hopf$ is a \emph{Hopf ideal} if the subset $\HIdeal$ is
\begin{enumerate}
\item a two-sided (algebra) ideal,
\item a coideal, i.e.\ $\epsilon(\HIdeal)=0$ and $\Delta(\HIdeal) \subseteq \HIdeal\otimes \Hopf + \Hopf \otimes \HIdeal$ and
\item stable under the antipode, i.e.\ $S(\HIdeal)\subseteq \HIdeal$.
\end{enumerate}
Let $\Hopf$ be a graded Hopf algebra. 
Then we call $\HIdeal$ \emph{homogeneous} if for all $c \in \HIdeal$ with $c = \sum_{i=1}^n c_i$ and each $c_i$ homogeneous we have $c_i \in \HIdeal$ for $1 \leq i \leq n$.
\end{defn}

\begin{setup}[Quotient Hopf algebra and the annihilator of an ideal]
 Let $\Hopf$ be a Hopf algebra and let $\HIdeal\subseteq\Hopf$ be a Hopf ideal. 
 \begin{enumerate}
  \item The quotient vector space $\Hopf/\HIdeal$ carries a natural Hopf algebra structure (see \cite[Theorem 4.3.1.]{MR0252485}). 
  This structure turns the canonical quotient map $q \colon \Hopf \rightarrow \Hopf / \HIdeal$ into a morphism of Hopf algebras.
  \item Let $\lcB$ be a locally convex algebra. Then the algebra $\Hom_{\K}(\Hopf/\HIdeal,B)$ is canonically isomorphic to the \emph{annihilator of $\HIdeal$}:
 \[
  \Ann(\HIdeal, \lcB)= \setm{\phi \in \Hom_\K(\Hopf,\lcB)}{\phi(\HIdeal)=0_\lcB}
 \]
 which is a closed unital subalgebra of $\Hom_\K(\Hopf,\lcB)$.
 \end{enumerate}
 If $\Hopf$ is graded and the ideal $\HIdeal$ is homogeneous then the grading of $\Hopf$ induces a natural grading on the quotient $\Hopf/\HIdeal$. However, like the example of the universal enveloping algebra as a quotient of the tensor algebra (see Examples \ref{setup: tensor algebra} and \ref{setup: universal enveloping algebra}) shows, there are interesting ideals which occur naturally but are not homogeneous.
\end{setup}

\begin{lem}
 Let $\HIdeal$ be a Hopf ideal of the Hopf algebra $\Hopf$ with quotient mapping $\smfunc{q}{\Hopf}{\Hopf/\HIdeal}$.
 Let $\lcB$ be a commutative locally convex algebra. Then $\AnnGroup$ is a closed subgroup of the topological group $\Char{\Hopf}{\lcB}$.
 Furthermore it is isomorphic as a topological group to $\Char{\Hopf/\HIdeal}{\lcB}$ via the following isomorphism:
 \[
    \func{q_*}{ \Char{\Hopf / \HIdeal}{\lcB}   }{  \AnnGroup  }{\phi}{\phi \circ q.}
 \]
\end{lem}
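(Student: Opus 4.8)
The plan is to recognise precomposition with $q$ as a topological algebra isomorphism from $\Hom_\K(\Hopf/\HIdeal,\lcB)$ onto the annihilator $\Ann(\HIdeal,\lcB)$, and then to observe that this identification matches up the two character groups.

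First I would show that $q_*\colon\Hom_\K(\Hopf/\HIdeal,\lcB)\to\Ann(\HIdeal,\lcB)$, $\phi\mapsto\phi\circ q$, is a well-defined isomorphism of locally convex unital algebras (each side carrying its convolution product and the topology of pointwise convergence). Well-definedness and injectivity are exactly the universal property of the quotient vector space: since $q$ is surjective with $\ker q=\HIdeal$, a linear map $\Hopf\to\lcB$ factors uniquely through $q$ precisely when it annihilates $\HIdeal$. Since $q$ is a morphism of coalgebras, $\Delta_{\Hopf/\HIdeal}\circ q=(q\otimes q)\circ\Delta_\Hopf$ and $\epsilon_{\Hopf/\HIdeal}\circ q=\epsilon_\Hopf$, and inserting these into the defining formula $f\star g=m_\lcB\circ(f\otimes g)\circ\Delta$ yields $q_*(\phi\star\psi)=q_*(\phi)\star q_*(\psi)$ and $q_*(u_\lcB\circ\epsilon_{\Hopf/\HIdeal})=u_\lcB\circ\epsilon_\Hopf=1_\lcA$; thus $q_*$ is a homomorphism of unital algebras. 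Continuity of $q_*$ is clear because $\phi\mapsto\phi(q(h))$ is an evaluation map for each $h\in\Hopf$; for continuity of the inverse I would fix for each $\overline h\in\Hopf/\HIdeal$ a preimage $h\in\Hopf$ and note that $q_*^{-1}(\psi)(\overline h)=\psi(h)$ depends continuously on $\psi$. Hence $q_*$ is a topological isomorphism of algebras.

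Next I would check that $q_*$ restricts to a bijection of $\Char{\Hopf/\HIdeal}{\lcB}$ onto $\AnnGroup$. If $\phi\in\Char{\Hopf/\HIdeal}{\lcB}$ then $\phi\circ q$ is a composite of unital algebra homomorphisms, hence a member of $\Char{\Hopf}{\lcB}$, and it annihilates $\HIdeal$, so $q_*(\phi)\in\AnnGroup$. Conversely, if $\psi\in\AnnGroup$, write $\psi=\overline\psi\circ q$; surjectivity of $q$ transports multiplicativity and $\psi(1_\Hopf)=1_\lcB$ back to $\overline\psi$, so $\overline\psi\in\Char{\Hopf/\HIdeal}{\lcB}$. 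Since $\Char{\Hopf/\HIdeal}{\lcB}$ is a group under $\star$ by Lemma \ref{lem: char:mult} applied to $\Hopf/\HIdeal$, and $q_*$ is a homomorphism of monoids under $\star$, its image $\AnnGroup$ is a subgroup of $\Char{\Hopf}{\lcB}$ and $q_*$ is a group isomorphism onto it (compatibility with inversion can also be read off directly from $q\circ S_\Hopf=S_{\Hopf/\HIdeal}\circ q$). Closedness is immediate: $\Char{\Hopf}{\lcB}$ is closed in $\lcA$ by Lemma \ref{lem: char:mult}, and $\Ann(\HIdeal,\lcB)$ is a closed subspace of $\lcA$, so their intersection $\AnnGroup$ is closed. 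Together with the topological statement from the first step, $q_*$ is an isomorphism of topological groups $\Char{\Hopf/\HIdeal}{\lcB}\to\AnnGroup$.

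I do not expect a serious obstacle: the assertion is the universal property of the quotient Hopf algebra, rendered topological by the fact that every space involved carries the topology of pointwise convergence. The only points requiring mild care are that $q_*$ is a topological embedding — where surjectivity of $q$ is used to pick preimages and thereby trivialise continuity of $q_*^{-1}$ — and the routine bookkeeping that $q_*$ intertwines convolution products and antipodes, which relies on $q$ being a morphism of Hopf algebras.
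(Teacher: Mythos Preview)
Your proof is correct and follows essentially the same line as the paper's. The only organisational difference is that you first establish $q_*$ as a topological algebra isomorphism $\Hom_\K(\Hopf/\HIdeal,\lcB)\to\Ann(\HIdeal,\lcB)$ and then deduce the subgroup property of $\AnnGroup$ from the fact that its preimage is a group, whereas the paper verifies the subgroup axioms directly (closure under $\star$ from $\Ann(\HIdeal,\lcB)$ being a subalgebra, closure under inversion from $S_\Hopf(\HIdeal)\subseteq\HIdeal$) and only then records that $q_*$ is a group isomorphism; both routes are equally short.
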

\begin{proof}
 We first prove that $\AnnGroup$ is a closed subgroup.
 It is stable under the group product and contains the unit because $\Ann(\HIdeal, \lcB) $ is a unital subalgebra.
 To see that it is stable under inversion recall from Lemma \ref{lem: char:mult} that inversion in $\Char{\Hopf}{\lcB}$ is given by precomposition with the antipode. 
 Hence for $\phi \in \AnnGroup$ we find $\phi^{-1}(\HIdeal) = \phi \circ S (\HIdeal) \subseteq \phi (\HIdeal) =0$.
 Finally, $\AnnGroup$ is closed as a subset of $\Char{\Hopf}{\lcB}$ because $\Ann(\HIdeal, \lcB) $ is closed in $\lcA$, and $\Char{\Hopf}{\lcB}$ carries the subset topology.

 The map $\smfunc{q_*}{ \Char{\Hopf / \HIdeal}{\lcB}   }{  \AnnGroup  }$ is clearly an isomorphism of groups. Continuity of $q_*$ and $(q_*)^{-1}$ follows from the fact that we use pointwise convergence on all spaces.
\end{proof}

\begin{thm}\label{thm: ann:sbgp}
 Let $\Hopf$ be a graded connected Hopf algebra and $\HIdeal\subseteq \Hopf$ be a (not necessarily homogeneous) Hopf ideal.
 Furthermore, we fix a commutative locally convex algebra $\lcB$.
 Then
 \begin{itemize}
  \item[\normalfont (i)]   $\Char{\Hopf/\HIdeal}{B} \cong \AnnGroup \subseteq \Char{\Hopf}{\lcB}$ is a closed Lie subgroup, and even an exponential BCH--Lie group.
  \item[\normalfont (ii)]  $\AnnLieAlgebra \subseteq \InfChar{\Hopf}{\lcB}$ is a closed Lie subalgebra, and a BCH--Lie algebra.
  \item[\normalfont (iii)] The map $\exp$ restricts to a global $\K$-analytic diffeomorphism \\ $\AnnLieAlgebra \to \AnnGroup$.
  \item[\normalfont (iv)]  If $\HIdeal$ is homogeneous then the Lie group structure on $\AnnGroup$ agrees with the one already obtained on $\Char{\Hopf/\HIdeal}{\lcB}$.
 \end{itemize}
\end{thm}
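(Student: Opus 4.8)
The plan is to rerun the proof of Theorem~\ref{thm: Char:Lie}, but intersected throughout with the closed unital subalgebra $\Ann(\HIdeal,\lcB)$ of $(\lcA,\star)$. The algebraic input I would record first: $\Ann(\HIdeal,\lcB)$ genuinely is a $\star$-subalgebra, since $1_\lcA = u_\lcB\circ\epsilon_\Hopf$ annihilates $\HIdeal$ (as $\epsilon_\Hopf(\HIdeal)=0$) and, for $\phi,\psi$ annihilating $\HIdeal$ and $c\in\HIdeal$, the coideal condition $\Delta_\Hopf(c)\in\HIdeal\otimes\Hopf+\Hopf\otimes\HIdeal$ makes every Sweedler summand of $(\phi\star\psi)(c)=\sum_{(c)}\phi(c_1)\psi(c_2)$ vanish. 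Hence $\AnnLieAlgebra$ is the intersection of the closed Lie subalgebra $\InfChar{\Hopf}{\lcB}$ (Lemma~\ref{lem: inf:subalg}) with the closed subalgebra $\Ann(\HIdeal,\lcB)$, so it is a closed Lie subalgebra of $(\lcA,[\cdot,\cdot])$, and — being a closed Lie subalgebra of the BCH--Lie algebra $\InfChar{\Hopf}{\lcB}$ — a BCH--Lie algebra, which is (ii). Moreover, because $\Ann(\HIdeal,\lcB)$ is closed under $\star$ and, being a closed subspace, under limits, all partial sums of the exponential series and of the logarithmic series stay inside it; so the $C^\omega_\K$-diffeomorphism $\exp_\lcA\colon\IdealA\to 1_\lcA+\IdealA$ of Lemma~\ref{lem: exp_and_log} restricts to a $C^\omega_\K$-diffeomorphism between the closed subspaces $\Ann(\HIdeal,\lcB)\cap\IdealA$ and $\Ann(\HIdeal,\lcB)\cap(1_\lcA+\IdealA)$.

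Next I would intersect this with the commutative diagram~\eqref{eq: comm_diag_exp} of Theorem~\ref{thm: Char:Lie}. Since $\exp_\lcA$ restricts to a bijection $\InfChar{\Hopf}{\lcB}\to\Char{\Hopf}{\lcB}$, and since $\InfChar{\Hopf}{\lcB}\subseteq\IdealA$ and $\Char{\Hopf}{\lcB}\subseteq 1_\lcA+\IdealA$ by~\ref{setup: infcharIdealA}, a bijection restricts to the intersections, giving a bijection $\exp_\lcA|\colon\AnnLieAlgebra\to\AnnGroup$. With this in hand I would invoke the closed subgroup theorem for locally exponential Lie groups: $G\coloneq\Char{\Hopf}{\lcB}$ is a BCH--Lie group with global exponential $\exp_G=\exp_\lcA|$ and Lie algebra $\InfChar{\Hopf}{\lcB}$ (Theorem~\ref{thm: Char:Lie}), and $H\coloneq\AnnGroup$ is a closed subgroup of it (the Lemma preceding this theorem). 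The set $\{\phi\in\InfChar{\Hopf}{\lcB}\mid \exp_G(\R\phi)\subseteq H\}$ is exactly $\AnnLieAlgebra$: ``$\supseteq$'' is the previous sentence, and ``$\subseteq$'' holds because $\Ann(\HIdeal,\lcB)$ is closed, so $\phi=\frac{d}{dt}\big|_{0}\exp_\lcA(t\phi)$ lies in it. As this set is a closed subalgebra and $H=\exp_G(\AnnLieAlgebra)$ is set-theoretically equal to, hence generated by, its exponential image, \cite[Theorem IV.3.3]{MR2261066} — already used in the proof of Theorem~\ref{thm: Char_regular} — yields that $\AnnGroup$ is a closed Lie subgroup of $\Char{\Hopf}{\lcB}$ with Lie algebra $\AnnLieAlgebra$, hence again a BCH--Lie group whose Lie group exponential is the restriction $\exp_\lcA|\colon\AnnLieAlgebra\to\AnnGroup$, a global $\K$-analytic diffeomorphism (so $\AnnGroup$ is exponential). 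This gives (i) and (iii) and completes (ii). If matching the reference's hypotheses turns out awkward, the fallback is to copy the submanifold argument of Theorem~\ref{thm: Char:Lie} verbatim: $\AnnGroup=\exp_\lcA(\AnnLieAlgebra)$ is a closed analytic submanifold of $1_\lcA+\IdealA$, the restrictions of $\star$ and of $\phi\mapsto\phi\circ S_\Hopf$ to it are analytic, the tangent map at $0$ of $\exp_\lcA|$ is the identity so $\Lf(\AnnGroup)=\AnnLieAlgebra$ with commutator bracket, and the BCH series of two elements of the closed subalgebra $\AnnLieAlgebra$ converges within $\AnnLieAlgebra$, furnishing the BCH chart.

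For part~(iv), when $\HIdeal$ is homogeneous the quotient $\Hopf/\HIdeal$ is graded connected — the grading is induced by $q$, and $\epsilon_\Hopf(\HIdeal)=0$ with $\epsilon_\Hopf|_{\Hopf_0}$ injective forces $\HIdeal\cap\Hopf_0=0$, whence $(\Hopf/\HIdeal)_0\cong\K$ — so Theorem~\ref{thm: Char:Lie} equips $\Char{\Hopf/\HIdeal}{\lcB}$ with a Lie group structure built from $\exp$ on $\lcA'\coloneq(\Hom_\K(\Hopf/\HIdeal,\lcB),\star)$. I would then analyse $q_*\colon\lcA'\to\lcA$, $\phi\mapsto\phi\circ q$: it is $\K$-linear, injective (as $q$ is onto), a homeomorphism onto its image $\Ann(\HIdeal,\lcB)$ for the pointwise-convergence topologies, and multiplicative because $q$ is a coalgebra morphism; so $q_*$ is an isomorphism of topological algebras onto the closed subalgebra $\Ann(\HIdeal,\lcB)$, in particular a $C^\omega_\K$-diffeomorphism onto it. Using $\epsilon_{\Hopf/\HIdeal}\circ q=\epsilon_\Hopf$ and $(\Hopf/\HIdeal)_0=q(\Hopf_0)$, it carries $\InfChar{\Hopf/\HIdeal}{\lcB}$ onto $\AnnLieAlgebra$ and intertwines $\exp_{\lcA'}$ with $\exp_\lcA$; comparing the resulting diagram with~\eqref{eq: comm_diag_exp} for $\Hopf$ shows $q_*$ restricts to a $\K$-analytic diffeomorphism $\Char{\Hopf/\HIdeal}{\lcB}\to\AnnGroup$, which is a group isomorphism by the Lemma preceding this theorem, hence an isomorphism of Lie groups; i.e.\ the two Lie group structures on $\AnnGroup$ coincide.

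The step I expect to cost the most is matching the hypotheses of \cite[Theorem IV.3.3]{MR2261066} in the second paragraph — pinning down $\AnnLieAlgebra$ as the Lie algebra of the closed subgroup and verifying that $\exp_G(\AnnLieAlgebra)$ generates it — together with the bookkeeping in (iv) that $q_*$ is simultaneously a topological, analytic and algebra isomorphism onto $\Ann(\HIdeal,\lcB)$. Everything else is a routine restriction, to closed submanifolds, of structures already built for $\Char{\Hopf}{\lcB}$ in Theorem~\ref{thm: Char:Lie}.
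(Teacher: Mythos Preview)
Your proof of (i)--(iii) is essentially the paper's: you establish that $\Ann(\HIdeal,\lcB)$ is a closed unital $\star$-subalgebra, conclude that $\exp_\lcA$ and $\log_\lcA$ (being power series) preserve it, intersect this with the bijection $\InfChar{\Hopf}{\lcB}\to\Char{\Hopf}{\lcB}$, and then invoke \cite[Theorem IV.3.3]{MR2261066}. Your extra care in identifying $\{\phi:\exp_G(\R\phi)\subseteq H\}$ with $\AnnLieAlgebra$ is more explicit than the paper, which simply cites (ii) and (iii) as input to that theorem; your fallback submanifold argument is also sound but unnecessary.

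For (iv) you take a genuinely different route. You argue directly that $q_*\colon\lcA'\to\Ann(\HIdeal,\lcB)$ is an isomorphism of topological algebras, hence $C^\omega_\K$, and that it intertwines the two exponential maps, so the global charts match up. The paper instead observes that $q_*$ is already known to be an isomorphism of topological groups between two BCH--Lie groups, and applies the Automatic Smoothness Theorem \cite[Theorem IV.1.18]{MR2261066} to upgrade this for free to an analytic isomorphism. Your approach is more hands-on and self-contained (it does not rely on the black box of automatic smoothness, and as a byproduct exhibits the algebra-level compatibility of $q_*$), but the paper's is considerably shorter and avoids the bookkeeping you flag as the costly step.
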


\begin{proof}
\begin{enumerate}
 \item[(ii)] 
	$\AnnLieAlgebra$ is a Lie subalgebra because $[\phi,\psi] = \phi \star \psi - \psi \star \phi$ and $\Ann(\HIdeal, \lcB) $ is stable under convolution.
	It is closed because $\Ann(\HIdeal, \lcB) $ is closed in $\lcA$.
	As a closed Lie subalgebra of a BCH--Lie algebra, $\AnnLieAlgebra$ is also a BCH--Lie algebra.
\item[(iii)]
	From Theorem \ref{thm: Char:Lie} we deduce that it suffices to prove that $\exp$ restricts to a bijection $\AnnLieAlgebra \to \AnnGroup$.

	Recall from Theorem \ref{thm: Char:Lie} that the Lie group exponential map of $\Char{\Hopf}{\HIdeal}$ is a global diffeomorphism which is given on $\InfChar{\Hopf}{\lcB}\subseteq\IdealA$ by a convergent power series. 
	Hence $\exp$ maps elements in a closed unital subalgebra into the subalgebra, i.e.~$\exp(\phi)\in \Ann(\HIdeal,\lcB)$ for each $\phi\in\Ann(\HIdeal,\lcB)\cap \IdealA$ and thus
        \[
         \exp (\AnnLieAlgebra) \subseteq \AnnGroup.
        \]
	The logarithm $\log$ on $\Char{\Hopf}{\lcB}\subseteq(1_\lcA + \IdealA)$ is also given by a power series which converges on $\AnnGroup$ and by the same argument, we obtain:
        \[
         \log (\AnnGroup)    \subseteq \AnnLieAlgebra.
        \]
	In conclusion, $\exp$ restricts to a bijection $\AnnLieAlgebra \to \AnnGroup$ as desired.
\item[(i)]
	It now follows from (ii),(iii), and \cite[Theorem IV.3.3]{MR2261066} that $\AnnGroup$ is a closed Lie subgroup, and an exponential BCH--Lie group. 
\item[(iv)]
	We have already seen that $\Char{\Hopf / \HIdeal}{\lcB}$ and $\AnnGroup$ are isomorphic as topological groups and 
	that $\Char{\Hopf / \HIdeal}{\lcB}$ and $\AnnGroup$ are Baker--Campbell--Hausdorff--Lie groups.
	The Automatic Smoothness Theorem \cite[Theorem IV.1.18]{MR2261066} thus implies that the isomorphisms are in fact ($\K$-analytic) morphisms of Lie groups. \qedhere
\end{enumerate}
\end{proof}
 
 Note that when $\HIdeal$ is homogeneous and $\lcB$ is Mackey complete, it follows from Theorem \ref{thm: ann:sbgp}(iv) that the Lie subgroup $\AnnGroup$ is again a regular Lie group. 
 Namely, we derive together with Theorem \ref{thm: Char_regular} the following regularity result: 
 
 \begin{cor} \label{cor: ann:reg}
  Let $\Hopf$ be a graded connected Hopf algebra and $\lcB$ be a commutative and Mackey complete locally convex algebra. 
  Furthermore, let $\HIdeal \subseteq \Hopf$ be a homogeneous Hopf ideal, then the Lie subgroup $\AnnGroup$ is a $C^1$-regular Lie group.
  If $\lcB$ is in addition sequentially complete, then $\AnnGroup$ is $C^0$-regular.  
 \end{cor}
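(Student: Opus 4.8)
The plan is to deduce the corollary directly from Theorem~\ref{thm: ann:sbgp}(iv) together with Theorem~\ref{thm: Char_regular}, applied to the quotient Hopf algebra $\Hopf/\HIdeal$ in place of $\Hopf$. The key observation is that homogeneity of $\HIdeal$ is exactly what makes the quotient fall back into the class of Hopf algebras covered by Theorem~\ref{thm: Char_regular}.

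First I would verify that $\Hopf/\HIdeal$ is again a graded and connected Hopf algebra. Since $\HIdeal$ is homogeneous, $\HIdeal=\bigoplus_{n\in\N_0}(\HIdeal\cap\Hopf_n)$, so the grading of $\Hopf$ descends to a grading $\Hopf/\HIdeal=\bigoplus_{n\in\N_0}\Hopf_n/(\HIdeal\cap\Hopf_n)$, and the quotient map $q$ is a morphism of graded Hopf algebras. For connectedness, recall $\Hopf_0=\K 1_\Hopf$ and that the coideal axiom gives $\epsilon_\Hopf(\HIdeal)=0$; since $\epsilon_\Hopf$ restricts to an isomorphism $\Hopf_0\to\K$, this forces $\HIdeal\cap\Hopf_0=0$, whence $(\Hopf/\HIdeal)_0\cong\K$. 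Thus $\Hopf/\HIdeal$ is graded and connected, and Theorem~\ref{thm: Char_regular} applies to it with the (unchanged) commutative Mackey complete algebra $\lcB$: the group $\Char{\Hopf/\HIdeal}{\lcB}$ is $C^1$-regular, and $C^0$-regular if $\lcB$ is in addition sequentially complete, with $\K$-analytic evolution map in both cases.

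It then remains to transport this along the isomorphism. By Theorem~\ref{thm: ann:sbgp}(iv) the map $q_*\colon\Char{\Hopf/\HIdeal}{\lcB}\to\AnnGroup$ is an isomorphism of ($\K$-analytic) Lie groups. Since $C^k$-regularity, together with analyticity of the evolution, is an invariant of the Lie group structure — for a Lie group isomorphism $\varphi\colon G\to H$ one has $\Evol_H(\gamma)=\varphi\circ\Evol_G(\Lf(\varphi)^{-1}\circ\gamma)$ and hence $\evol_H=\varphi\circ\evol_G\circ(\Lf(\varphi)^{-1})_*$, a composition of ($\K$-analytic) maps whenever $\evol_G$ is such — it follows that $\AnnGroup$ is $C^1$-regular (respectively $C^0$-regular under the extra sequential completeness hypothesis) with $\K$-analytic evolution map.

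There is no genuine obstacle in this argument; the substantive work has already been done in Theorems~\ref{thm: Char_regular} and~\ref{thm: ann:sbgp}. The only points that need a line of care are the elementary check that a homogeneous Hopf ideal yields a graded connected quotient, and the standard fact that $C^k$-regularity (with analytic evolution) is preserved under isomorphisms of Lie groups — the latter being essentially immediate from the formula for the evolution of a transported curve displayed above.
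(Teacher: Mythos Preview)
Your proposal is correct and follows essentially the same route as the paper: the corollary is derived directly from Theorem~\ref{thm: ann:sbgp}(iv) together with Theorem~\ref{thm: Char_regular} applied to the quotient $\Hopf/\HIdeal$. The extra details you supply---that a homogeneous Hopf ideal yields a graded connected quotient (using $\epsilon_\Hopf(\HIdeal)=0$ to get $\HIdeal\cap\Hopf_0=0$) and that $C^k$-regularity transports along Lie group isomorphisms---are exactly the elementary verifications the paper leaves implicit.
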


 Note that we have not established the regularity condition of $\AnnGroup$ for non-homogeneous $\HIdeal$. 
 
 \begin{problem}
  Let $\Hopf$ be a graded and connected Hopf algebra and $\lcB$ be a commutative and Mackey complete locally convex algebra. 
  Is the Lie group $\AnnGroup$ $C^k$-regular (with $k \in \N_0 \cup \{ \infty \}$) if $\HIdeal$ is a non-homogeneous Hopf Ideal?
  Again it suffices to prove that $\AnnGroup$ is a semiregular Lie subgroup of $\Char{\Hopf}{\lcB}$.
  However, the idea from the proof of Theorem \ref{thm: Char_regular} seems to carry over only to the situation where $\HIdeal$ is homogeneous. 
 \end{problem}

In the special case that $\lcB$ is a weakly complete algebra (e.g.\ a finite-dimensional algebra) we deduce from Remark \ref{rem: lotsastuff} (c) the following corollary. 
 
 \begin{cor}
  Let $\Hopf$ be a connected graded Hopf algebra and $\lcB$ be a commutative and weakly complete locally convex algebra. 
  Furthermore, let $\HIdeal \subseteq \Hopf$ be a Hopf ideal. 
  Then the Lie subgroup $\AnnGroup$ is regular.
 \end{cor}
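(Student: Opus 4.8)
The plan is to obtain regularity of $G\coloneq\AnnGroup$ by combining the closed Lie subgroup picture of Theorem~\ref{thm: ann:sbgp} with the $C^0$-regularity of the ambient character group from Theorem~\ref{thm: Char_regular}, and to use weak completeness of $\lcB$ precisely where the homogeneity of $\HIdeal$ would otherwise be needed. First I would note that a weakly complete algebra is, as a topological vector space, isomorphic to $\K^I$ and hence complete; in particular $\lcB$ is Mackey complete and sequentially complete, so Theorems~\ref{thm: Char:Lie}, \ref{thm: Char_regular} and \ref{thm: ann:sbgp} all apply. By Theorem~\ref{thm: ann:sbgp}(i) the group $G$ is a closed Lie subgroup of $\Char{\Hopf}{\lcB}$, isomorphic as a Lie group to $\Char{\Hopf/\HIdeal}{\lcB}$, and an exponential BCH--Lie group modelled on the closed subspace $\AnnLieAlgebra$ of $\InfChar{\Hopf}{\lcB}$; by Theorem~\ref{thm: Char:Lie}(iii) this model space is again weakly complete. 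By Theorem~\ref{thm: Char_regular}(b) the ambient group $\Char{\Hopf}{\lcB}$ is $C^0$-regular with $\K$-analytic evolution map.

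The heart of the proof is $C^0$-semiregularity of $G$: one must show that for every $\gamma\in C^0([0,1],\AnnLieAlgebra)$ the evolution $\eta\coloneq\Evol(\iota\circ\gamma)\colon[0,1]\to\Char{\Hopf}{\lcB}$, which exists because $\Char{\Hopf}{\lcB}$ is $C^0$-regular, actually takes values in $G$, i.e.\ $\eta(t)(\HIdeal)=0$ for all $t$. When $\HIdeal$ is homogeneous this is exactly the degree-induction of Step~4 in the proof of Theorem~\ref{thm: Char_regular} (giving Corollary~\ref{cor: ann:reg}), but for a non-homogeneous ideal there is no degree on $\HIdeal$ to induct on. This is where weak completeness enters, through Remark~\ref{rem: lotsastuff}(c): using the Fundamental Lemma of weakly complete algebras one writes $\lcB$ as a projective limit of finite-dimensional algebras $\lcB_j$, whence $\Char{\Hopf}{\lcB}\cong\varprojlim_{j}\Char{\Hopf}{\lcB_j}$ and, by functoriality of $\Evol$ along the projections, $\eta(t)=(\eta_j(t))_j$ with $\eta_j$ the evolution of the pushed-forward curve $\gamma_j$ in $\Char{\Hopf}{\lcB_j}$. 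Each $\gamma_j$ still takes values in $\Ann(\HIdeal,\lcB_j)$, and $\Char{\Hopf/\HIdeal}{\lcB_j}\cong\Ann(\HIdeal,\lcB_j)\cap\Char{\Hopf}{\lcB_j}$ is a closed subgroup of $\Char{\Hopf}{\lcB_j}$ into which $\eta_j$ descends automatically in the weakly complete setting; passing to the limit forces $\eta(t)$ to annihilate $\HIdeal$, so $\eta$ is an evolution in $G$. This semiregularity step is exactly the point flagged in the open question above, and it is the main obstacle; weak completeness resolves it by reducing to the finite-dimensional coefficient case.

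Finally, once semiregularity is known, full $C^0$-regularity follows verbatim as in Step~5 of the proof of Theorem~\ref{thm: Char_regular}. The inclusion $\iota\colon\AnnLieAlgebra\to\lcA$ is continuous and linear, hence induces a $\K$-analytic push-forward $\iota_{*}$ on curve spaces by \cite[Lemma~1.2]{MR2997582}, the composition $\evol\circ\iota_{*}$ is $\K$-analytic by Theorem~\ref{thm: unit_group_regular}, and since $G$ is a \emph{closed} Lie subgroup of a $C^0$-regular group and is $C^0$-semiregular, this map factors through a $\K$-analytic map $C^0([0,1],\AnnLieAlgebra)\to G$, which is the evolution map of $G$. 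Therefore $G=\AnnGroup$ is $C^0$-regular, in particular regular in the sense of Milnor.
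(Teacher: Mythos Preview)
Your general orientation is correct: the corollary is meant to be read off from Remark~\ref{rem: lotsastuff}, using that for weakly complete $\lcB$ the group $\AnnGroup$ is simultaneously a BCH--Lie group modelled on a weakly complete space (a closed subspace of $\InfChar{\Hopf}{\lcB}$, cf.\ Theorem~\ref{thm: Char:Lie}(iii)) and a pro-Lie group (a closed subgroup of the pro-Lie group $\Char{\Hopf}{\lcB}$). The paper does not redo the semiregularity argument in this situation; it simply appeals to \cite{MR2475971}, where it is shown that a locally convex Lie group whose underlying topological group is a pro-Lie group and whose model space is weakly complete is automatically regular. That black-box result is the whole proof.

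Your attempt to replace that citation by an explicit projective-limit argument has a genuine gap at the decisive point. You write $\lcB=\varprojlim_j \lcB_j$ with $\lcB_j$ finite-dimensional, push the curve $\gamma$ down to curves $\gamma_j$ in $\Ann(\HIdeal,\lcB_j)\cap\InfChar{\Hopf}{\lcB_j}$, and then assert that the evolution $\eta_j$ ``descends automatically in the weakly complete setting'' into $\Ann(\HIdeal,\lcB_j)\cap\Char{\Hopf}{\lcB_j}$. But this is exactly the semiregularity statement you are trying to prove, only with $\lcB$ replaced by $\lcB_j$. Passing to finite-dimensional targets buys you nothing here: the obstruction sits in the non-homogeneity of the Hopf ideal $\HIdeal$, not in the size of the target algebra, and the induction of Theorem~\ref{thm: Char_regular} still fails for $\lcB_j$ just as it does for $\lcB$. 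If you want a self-contained argument you must go one step further and also resolve the $\Hopf$-direction projectively (using the Fundamental Theorem of Coalgebras / Lemma~\ref{lem: fundamental_lemma_of_weakly_complete_algebras}(c) to write $\Hom_\K(\Hopf,\lcB_j)$ as a projective limit of \emph{finite-dimensional} algebras), so that at the bottom of the system you are looking at closed subgroups of finite-dimensional Lie groups, where semiregularity is trivial; the compatibility of evolutions along the projective system is then what \cite{MR2475971} organises. Without that second reduction, or without invoking \cite{MR2475971} directly, your argument is circular.
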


 \section{(Counter-)examples for Lie groups arising as Hopf algebra characters} \label{section: Examples}
 
 In this section we give several examples for Lie groups arising from the construction in the last section. 
 In the literature many examples for graded and connected Hopf algebras are studied (we refer the reader to \cite{MR2290769} and the references and examples therein).
 In particular, the so called combinatorial Hopf algebras provide a main class of examples for graded and connected Hopf algebras (see \cite{MR2732058} for an overview).
 A prime example for a combinatorial Hopf algebra is the famous Connes--Kreimer Hopf algebra of rooted trees.
 Its character group corresponds to the Butcher group from numerical analysis and we discuss it as our main example below.
 Furthermore, we discuss several (counter-)examples to statements in Theorem \ref{thm: Char:Lie} for characters of Hopf algebras which are \emph{not} graded.

 \subsection*{Tensor algebras and universal enveloping algebras}
  \begin{setup}[Tensor algebra]															\label{setup: tensor algebra}
   Consider an abstract vector space $\VV$. Then the tensor algebra
   \[
    T(\VV)\coloneq \bigoplus_{n=0}^\infty \VV^{\otimes n} \text{ with } \VV^{\otimes n} \coloneq \underbrace{\VV \otimes \cdots\otimes \VV }_{n}
   \]
   has a natural structure of a graded connected Hopf algebra $(T(\VV),\otimes,u,\Delta,\epsilon,S)$ with
   \[
    \Delta(v)=1\otimes v + v\otimes 1 \hbox{ and } S(v)=-v \hbox{ for } v\in\VV.
   \]
   By Theorem \ref{thm: Char:Lie} the character group of $T(\VV)$ is a BCH--Lie group. 
   This group can be described explicitly.

   Every linear functional on $\VV$ has a unique extension to a character of the Hopf algebra $T(\VV)$, yielding a bijection to the algebraic dual $\VV^*$:
   \[
    \func{\Phi}{\Char{T(\VV)}{\K}}{\VV^*}{\phi}{\phi|_\VV.}
   \]
   We claim that $\Phi$ is a group isomorphism (where we view $\VV^*$ as a group with respect to its additive structure). Let $v\in\VV$ and $\phi,\psi\in\Char{T(\VV)}{\K}$ be given, then
   \begin{align*}
    (\phi \star \psi) (v) &= m_\K \circ (\phi\otimes \psi)(\Delta (v) ) = m_\K (\phi\otimes\psi)(v\otimes1+1\otimes v) \\
			  &= \phi(v)\psi(1)+\phi(1)\psi(v)=\phi(v)+\psi(v).
   \end{align*}
   Thus, the group $(\Char{T(\VV)}{\K},\star)$ is isomorphic to the additive group $(\VV^*,+)$.
   As $\VV^*$ is endowed with the weak*-topology, it is easy to check that $\Phi$ and $\Phi^{-1}$ are both continuous, hence $\Phi$ is also an isomorphism of topological groups. Since both Lie groups are known to be BCH--Lie groups, the the Automatic Smoothness Theorem \cite[Theorem IV.1.18.]{MR2261066} guarantees that $\Phi$ is also an analytic diffeomorphism, hence an isomorphism in the category of analytic Lie groups.
  \end{setup}

  \begin{setup}[Universal enveloping algebra]															\label{setup: universal enveloping algebra}
   The \emph{universal enveloping algebra} $\cU (\g)$ of an abstract non-graded Lie algebra $\g$ can be constructed as a quotient of the connected graded Hopf algebra $T(\g)$ and hence, its character group is a Lie group by Theorem \ref{thm: ann:sbgp}. Note that we cannot use Theorem \ref{thm: Char:Lie} directly since in general $\cU(\g)$ does not possess a natural connected grading (the grading of the tensor algebra induces only a filtration on $\cU (\g)$, see \cite[Theorem V.2.5]{MR1321145}). 
   If $\g$ is abelian, the universal enveloping algebra $\cU (\g)$ coincides with the symmetric algebra $S(\g)$ (cf.\ \cite[V.2 Example 1]{MR1321145}). % and we recover \ref{setup: symmetric algebra}. 
   It is possible to give an explicit description of the group $\Char{\cU(\g)}{\K}$:
   Every character of $\phi\in\Char{\cU(\g)}{\K}$ corresponds to a Lie algebra homomorphism $\smfunc{\phi|_\g}{\g}{\K}$ which in turn factors naturally through a linear map $\smfunc{\phi^\sim}{\g/(\g')}{\K}$, yielding a bijection
   \[
    \func{\Phi}{\Char{\cU(\g)}{\K}}{\bigl((\g/\g')^*,+\bigr)}{\phi}{\bigl( \phi^\sim\colon v+\g' \mapsto \phi(v) \bigr).}
   \]
   Like in the case of the tensor algebra and the symmetric algebra, one easily verifies that this is an isomorphism of topological groups and 
   since both Lie groups $\Char{\cU(\g)}{\K}$ and $\bigl((\g/\g')^*,+\bigr)$ are BCH--Lie groups, we use again the Automatic Smoothness Theorem \cite[Theorem IV.1.18.]{MR2261066} to see that they are also isomorphic as analytic Lie groups.

   In particular, this shows that the character group of $\cU(\g)$ only sees the abelian part of $\g$ and is therefore not very useful for studying the Lie algebra $\g$.
  \end{setup}
  
  \begin{rem}[Universal enveloping algebra of a graded Lie algebra] 
   We remark that there is a notion of a \emph{graded Lie algebra} which differs from the usual notion of a Lie algebra. Such graded Lie algebras also admit a universal enveloping algebra which inherits a grading from the Lie algebra grading (see \cite[5.]{MR0174052}). 
   Then, the universal enveloping algebra becomes a graded (in general not \emph{connected}) Hopf algebra. 
  \end{rem}

 \subsection*{Characters of the Hopf algebra of rooted trees}
 
  We examine the Hopf algebra of rooted trees which arises naturally in numerical analysis, renormalisation of quantum field theories and non-commutative geometry (see \cite{Brouder-04-BIT} for a survey).
  To construct the Hopf algebra, recall some notation first.
 
 \begin{nota}
 \begin{enumerate}
  \item A \emph{rooted tree} is a connected \emph{finite} graph without cycles with a distinguished node called the \emph{root}. 
  We identify rooted trees if they are graph isomorphic via a root preserving isomorphism.
  
  Let $\RT$ be \emph{the set of all rooted trees} and write $\RT_0 \coloneq \RT \cup \{\emptyset\}$ where $\emptyset$ denotes the empty tree.
  The \emph{order} $|\tau|$ of a tree $\tau \in \RT_0$ is its number of vertices. 
  \item An \emph{ordered subtree}\footnote{The term ``ordered'' refers to that the subtree remembers from which part of the tree it was cut.} of $\tau \in \RT_0$ is a subset $s$ of all vertices of $\tau$ which satisfies 
    \begin{compactitem}
     \item[(i)] $s$ is connected by edges of the tree $\tau$,
     \item[(ii)] if $s$ is non-empty, it contains the root of $\tau$.
    \end{compactitem}
   The set of all ordered subtrees of $\tau$ is denoted by $\OST (\tau)$.
   Associated to an ordered subtree $s \in \OST (\tau)$ are the following objects:
    \begin{compactitem}
     \item A forest (collection of rooted trees) denoted as $\tau \setminus s$. 
     The forest $\tau \setminus s$ is obtained by removing the subtree $s$ together with its adjacent edges from $\tau$.
     We denote by $\# (\tau \setminus s)$ the number of trees in the forest $\tau \setminus s$.
     \item $s_\tau$, the rooted tree given by vertices of $s$ with root and edges induced by that of the tree $\tau$.
    \end{compactitem}
 \end{enumerate}
\end{nota}

\begin{nota}
 A \emph{partition} $p$ of a tree $\tau \in \RT_0$ is a subset of edges of the tree.
 We denote by $\cP (\tau)$ the set of all partitions of $\tau$ (including the empty partition). 
  Associated to a partition $p \in \cP (\tau)$ are the following objects 
    \begin{compactitem}
     \item A forest $\tau \setminus p$ which is defined as the forest that remains when the edges of $p$ are removed from the tree $\tau$.
     Write $\#(\tau\setminus p)$ for the number of trees in $\tau \setminus p$.
     \item The \emph{skeleton} $p_\tau$, is the tree obtained by contracting each tree of $\tau \setminus p$ to a single vertex and by re-establishing the edges of $p$.
    \end{compactitem}
 \end{nota}
 
 \begin{ex}[The Connes--Kreimer Hopf algebra of rooted trees \cite{CK98}]\label{ex: RT}
  Consider the algebra $\Hopf^{\K}_{CK} \coloneq \K [\cT]$ of polynomials which is generated by the trees in $\RT$. 
  We denote the structure maps of this algebra by $m$ (multiplication) and $u$ (unit).
  Indeed $\Hopf^\K_{CK}$ becomes a bialgebra with the coproduct  
    \begin{displaymath}
     \Delta \colon \Hopf^\K_{CK} \rightarrow \Hopf^\K_{CK} \otimes \Hopf^\K_{CK} , \tau \mapsto \sum_{s \in \OST (\tau)} (\tau \setminus s) \otimes s_\tau 
    \end{displaymath}
 and the counit $\epsilon \colon \Hopf^\K_{CK} \rightarrow \K$ defined via $\epsilon (1_{\Hopf^\K_{CK}}) = 1$ and $\epsilon (\tau) = 0$ for all $\tau \in \RT$.
 Furthermore, one can define an antipode $S$ via 
  \begin{displaymath}
   S \colon \Hopf^\K_{CK} \rightarrow \Hopf^\K_{CK} , \tau \mapsto \sum_{p \in \cP (\tau)} (-1)^{|p_\tau|} (\tau \setminus p)
  \end{displaymath}
 such that $\Hopf^\K_{CK} = (\Hopf^\K_{CK} ,m,u, \Delta,\epsilon,S)$ is a $\K$-Hopf algebra (see \cite[5.1]{CHV2010} for more details and references).
 
 Furthermore, the Hopf algebra $\Hopf^\K_{CK}$ is graded as a Hopf algebra by the \emph{number of nodes grading}: 
 For each $n \in \N_0$ we define the $n$th degree via
  \begin{displaymath}
  \text{For } \tau_i \in \RT, 1 \leq i \leq k, k \in \N_0 \quad \quad \tau_1 \cdot \tau_2 \cdot \ldots \cdot \tau_k  \in (\Hopf^\K_{CK})_n \text{ iff } \sum_{r= 1}^k |\tau_k| = n 
  \end{displaymath}
 Clearly, $\Hopf^\K_{CK}$ is connected with respect to the number of nodes grading and we identify $(\Hopf_{CK}^\K)_0$ with $\K \emptyset$. 
 Thus we can apply Theorem \ref{thm: Char:Lie} for every commutative CIA $\lcB$ to see that the $\lcB$ valued characters $\Char{\Hopf^\K_{CK}}{\lcB}$ form an exponential BCH--Lie group.
 \end{ex}
 
 It turns out that the $\K$-valued characters of the Connes--Kreimer Hopf algebra $\Hopf^\K_{CK}$ can be identified with the Butcher group from numerical analysis.
 
 \begin{ex}[The Butcher Group]\label{ex: Butcher}
  Let us recall the definition of the ($\K$-)Butcher group.
  As a set the ($\K$-)Butcher group is the set of tree maps 
    \begin{displaymath}
     G_{\text{TM}}^\K \coloneq \{a \colon \RT_0 \rightarrow \K \mid a (\emptyset) = 1 \}
    \end{displaymath}
 together with the group operation 
  \begin{displaymath}
    a\cdot b (\tau) \coloneq \sum_{s \in \OST (\tau)} b(s_\tau) a(\tau \setminus s) \quad \text{with} \quad a(\tau \setminus s) \coloneq \prod_{\theta \in \tau \setminus s} a (\theta).
  \end{displaymath}
 In \cite{BS14} we have constructed a Lie group structure for the ($\K$-)Butcher group as follows: 
 Choose an enumeration $\N \rightarrow \RT, n \mapsto \tau_n$ of the rooted trees.
 Then the global parametrisation 
 \begin{displaymath}
  \varphi^\K \colon \K^\N \rightarrow  G_{\text{TM}}^\K, (a_n)_{n \in \N} \mapsto \left(\tau \mapsto \begin{cases}
                                                                                                      1 &\text{if } \tau = \emptyset \\
												      a_n & \text{if } \tau = \tau_n
                                                                                                     \end{cases} \right)
\end{displaymath}
 turns $G_{\text{TM}}^\K$ into a BCH--Lie group modelled on the \Frechet space $\K^\N$. 
 \end{ex}

 Note that the group operation of the Butcher group is closely related to the coproduct of the Hopf algebra of rooted trees.
 Indeed the obvious morphism 
  \begin{displaymath}
   \Phi \colon \Char{\Hopf^\K_{CK}}{\K} \rightarrow G_{\text{TM}}^\K , \varphi \mapsto (\tau \mapsto \varphi(\tau))
  \end{displaymath}
 is an isomorphism of (abstract) groups (see also \cite[Eq.\ 38]{CHV2010}). 
 Moreover, it turns out that $\Phi$ is an isomorphism of Lie groups if we endow these groups with the Lie group structures discussed in Example \ref{ex: RT} and Example \ref{ex: Butcher}.
 
 \begin{lem}
 \label{lem: butcheriso}
  The group isomorphism $\Phi \colon \Char{\Hopf^\K_{CK}}{\K} \rightarrow G_{\text{TM}}^\K $ is an isomorphism of $\K$-analytic Lie groups. 
 \end{lem}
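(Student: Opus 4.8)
The plan is to follow exactly the pattern used in Examples \ref{setup: tensor algebra} and \ref{setup: universal enveloping algebra}. We already know that $\Phi$ is an isomorphism of abstract groups, so it only remains to check that $\Phi$ and $\Phi^{-1}$ are $\K$-analytic. Since $\Char{\Hopf^\K_{CK}}{\K}$ is a BCH--Lie group by Theorem \ref{thm: Char:Lie} (cf.\ Example \ref{ex: RT}) and $G_{\mathrm{TM}}^\K$ is a BCH--Lie group by \cite{BS14} (cf.\ Example \ref{ex: Butcher}), it suffices by the Automatic Smoothness Theorem \cite[Theorem IV.1.18]{MR2261066} to prove that $\Phi$ is a homeomorphism for the respective topologies; then both $\Phi$ and $\Phi^{-1}$ are continuous group homomorphisms between BCH--Lie groups, hence $\K$-analytic, and we are done.

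First I would identify the two topologies. Recall that $\Hopf^\K_{CK} = \K[\RT]$ is the polynomial algebra freely generated by the set $\RT$ of rooted trees, so a character is uniquely determined by its restriction to $\RT$, and conversely every family of scalars indexed by $\RT$ extends uniquely (multiplicatively) to a character. Fixing the enumeration $\N \to \RT,\ n \mapsto \tau_n$ from Example \ref{ex: Butcher} and composing with the global chart $\varphi^\K \colon \K^\N \to G_{\mathrm{TM}}^\K$, the map
\[
 (\varphi^\K)^{-1} \circ \Phi \colon \Char{\Hopf^\K_{CK}}{\K} \longrightarrow \K^\N, \qquad \varphi \longmapsto \bigl(\varphi(\tau_n)\bigr)_{n\in\N}
\]
has point evaluations as its components, hence is continuous for the topology of pointwise convergence. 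Its inverse sends $(a_n)_{n\in\N}$ to the character taking the prescribed values on the $\tau_n$; since the value of a character at an arbitrary element of $\Hopf^\K_{CK}$ is a (finite) polynomial expression in the numbers $a_n$, this inverse is continuous as well. Hence $\Phi$ is a homeomorphism.

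With the topological identification in hand, the Automatic Smoothness Theorem \cite[Theorem IV.1.18]{MR2261066} applies to both $\Phi$ and $\Phi^{-1}$ and shows that each is a $\K$-analytic group homomorphism. Therefore $\Phi$ is an isomorphism of $\K$-analytic Lie groups; in particular its differential at the identity is a topological isomorphism of Lie algebras identifying $\InfChar{\Hopf^\K_{CK}}{\K}$ with the Lie algebra $\K^\N$ of the Butcher group.

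I do not expect any serious obstacle: the only actual content is the topology matching in the second step, and this is immediate because $\Hopf^\K_{CK}$ is a free commutative algebra on $\RT$, so that point evaluations at polynomials depend polynomially (hence continuously) on the coordinates $\varphi(\tau_n)$. Everything else is a direct invocation of results already available in the paper.
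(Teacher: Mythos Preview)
Your proposal is correct and follows essentially the same route as the paper: establish that $\Phi$ is a homeomorphism (using that characters on the free commutative algebra $\K[\RT]$ are determined by, and depend polynomially on, their values on the generators $\tau_n$), and then invoke the Automatic Smoothness Theorem for BCH--Lie groups. The paper's proof is slightly terser on the continuity step (it just calls it ``a straightforward computation''), so your more explicit justification via the free-algebra structure is a welcome elaboration rather than a deviation.
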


  \begin{proof}
   We already know that $\Phi$ is an isomorphism of abstract groups whose inverse is given by $\Phi^{-1} (a) = \varphi_a$ where $\varphi_a$ is the algebra homomorphism defined via
    \begin{displaymath}
     \varphi_a (1_{\Hopf^\K_{CK}}) = 1 \text{ and } \varphi_{a} (\tau) = a(\tau)
    \end{displaymath}
   Recall from \ref{thm: glockner_CIA_BCH} that the Lie group $\lcA^\times \coloneq \Hom_K (\Hopf_{CK}^\K , \K)^\times$ carries the subspace topology with respect to the topology of pointwise convergence on $\Hom_K (\Hopf_{CK}^\K , \K)$. 
   Since by Theorem \ref{thm: Char:Lie} the character group $\Char{\Hopf^\K_{CK}}{\K}$ is a closed subgroup of $\lcA^\times$ its topology is the subspace topology with respect to the topology of pointwise convergence on $\Hom_K (\Hopf_{CK}^\K , \K)$.
   Furthermore, the topology on $G_{\text{TM}}^\K$ is the identification topology with respect to the parametrisation $\varphi^\K$ and the model space $\K^\N$ is endowed with the product topology. 
   Hence a straight forward computation shows that $\Phi$ and $\Phi^{-1}$ are continuous, i.e.\ they are isomorphisms of topological groups.
   Since both $\Char{\Hopf^\K_{CK}}{\K}$ and $G_{\text{TM}}^\K$ are BCH--Lie groups, the Automatic Smoothness Theorem \cite[Theorem IV.1.18]{MR2261066} asserts that $\Phi$ and $\Phi^{-1}$ are smooth (even real analytic).
   Thus $\Phi$ is an isomorphism of ($\K$-analytic) Lie groups.
  \end{proof}
 
 So far we have seen that our Theorem \ref{thm: Char:Lie} generalises the construction of the Lie group structure of the Butcher group from \cite{BS14}. 
 In loc.cit. we have also endowed the subgroup of symplectic tree maps with a Lie group structure. 
 This can be seen as a special case of the construction given in Theorem \ref{thm: ann:sbgp} as the next example shows. 
 Thus the results from \cite{BS14} are completely subsumed in the more general framework developed in the present paper.
 
 \begin{ex}[The subgroup of symplectic tree maps]						\label{ex: Symplectic tree maps}
  In \cite[Theorem 5.8]{BS14}, it was shown that the subgroup of symplectic tree maps $S^\K_{\text{TM}}\subseteq  G_{\text{TM}}^\K$ is a closed Lie subgroup of the Butcher group and that the subgroup is itself an exponential Baker--Campbell--Hausdorff Lie group.
  
  The symplectic tree maps are defined as those $a\in  G_{\text{TM}}^\K$ such that 
  \begin{displaymath}
   a(\tau \circ \upsilon) + a(\upsilon \circ \tau) = a(\tau)a(\upsilon) \text{ for all } \tau, \upsilon \in \RT,
  \end{displaymath}
 where $\tau \circ \upsilon$ denotes the rooted tree obtained by connecting $\tau$ and $\upsilon$ with an edge between the roots of $\tau$ and $\upsilon$, and where the root of $\tau$ is the root of $\tau \circ \upsilon$ \footnote{This is known as the \emph{Butcher product} and should not be confused with the product in the Butcher group (cf.\ \cite[Remark 5.1]{BS14}).}.
  
  To cast \cite[Theorem 5.8]{BS14} in the context of Theorem \ref{thm: ann:sbgp}, let $\HIdeal \subseteq \Hopf_{CK}$ be the algebra ideal generated by the elements $\{\tau \circ \upsilon + \upsilon \circ \tau -\tau\upsilon\}_{\tau, \upsilon \in \RT}$.
  Note that by definition of the Butcher product we have $|\tau \circ \upsilon | = | \upsilon \circ \tau| = |\tau\upsilon|$. 
  Hence the generating elements of $\HIdeal$ are homogeneous elements with respect to the number of nodes grading (see Example \ref{ex: RT}).
  Consequently $\HIdeal$ is a homogeneous (algebra) ideal.
  It is possible to show that $\HIdeal$ is also a co-ideal and stable under the antipode.
  
  If $a\in S^\K_{\text{TM}}$, then $\varphi_a = \Phi^{-1}(a)\in \Ann(\HIdeal, \lcB) \cap \Char{\Hopf}{\K}$, since $\varphi_a$ is an algebra morphism and zero on the generators of $\HIdeal$.
  The inverse implication also holds.
  Therefore, the restriction of $\Phi$ is a bijection between $S^\K_{\text{TM}}$ and $ \Ann(\HIdeal, \lcB) \cap \Char{\Hopf}{\K}$.
  By Theorem \ref{thm: ann:sbgp}, $\Ann(\HIdeal, \lcB) \cap \Char{\Hopf}{\K}\subseteq \Char{\Hopf}{\K}$ is a closed Lie subgroup and an exponential BCH--Lie group.
  Using Lemma \ref{lem: butcheriso}, we can show that this structure is isomorphic to the Lie group structure $S^\K_{\text{TM}} \subseteq  G_{\text{TM}}^\K$ constructed in \cite[Theorem 5.8]{BS14}.
  
  In addition to the Lie group structure on $S^\K_{\text{TM}}$ already constructed in \cite[Theorem 5.8]{BS14} we derive from Corollary \ref{cor: ann:reg} that the Lie group $S^\K_{\text{TM}}$ is $C^0$-regular.
 \end{ex}

 \subsection*{Characters of Hopf algebras without connected grading}
 
 For the rest of this section let us investigate the case of a Hopf algebra $\Hopf$ without a connected grading. 
 It will turn out that the results achieved for graded connected Hopf algebras (and quotients thereof) do not hold for Hopf algebras without grading.
 It should be noted however, that for scalar valued characters, we can show that they still form a so called pro-Lie group (see Theorem \ref{thm: character_group_is_a_pro_lie_group})
 \medskip
 
 Let $\Hopf$ be a Hopf algebra without a grading then the dual space $\lcA \coloneq \Hopf^*=\Hom_{\K}(\Hopf,\K)$ is still a locally convex algebra (see \ref{setup: maps on coalgebra}). 
 However, in general, neither its unit group will be an open subset, nor will the group of characters $\InfChar{\Hopf}{\K}$ be a Lie group modelled on a locally convex space.
 We give two examples for this behaviour.

 \begin{ex}														\label{ex: group algebra}
  Let $\Gamma$ be an abstract group. 
  Then the group algebra $\K\Gamma$ carries the structure of a cocommutative Hopf algebra by \cite[III.3 Example 2]{MR1321145}.
  The algebraic dual $\lcA\coloneq (\K\Gamma)^*$ is isomorphic to the direct product algebra $\K^\Gamma$ consisting of all functions on the group with pointwise multiplication. 
  Its unit group $(\K\setminus\smset{0})^\Gamma$ is a topological group (as a direct product of the topological group $\K\setminus\smset{0}$ with itself). 
  However, in general, it will not be open:
  
  \begin{enumerate}
   \item Let $\Gamma$ be an infinite group, then the unit group $\lcA^\times = (\K\setminus\smset{0})^\Gamma$ is not open in $\K^\Gamma$. 
    Hence the methods used to prove Theorem \ref{thm: Char:Lie} do not generalise to this situation.
  \end{enumerate}
  Furthermore, the construction of the algebra $\K\Gamma$ in \cite[III.2 Example 2]{MR1321145} implies that a linear map $\smfunc{\phi}{\K\Gamma}{\K}$ is a character if and only if the map $\smfunc{\phi|_\Gamma}{\Gamma}{\K^\times}$ is a group homomorphism.
   
   Thus, the group $\Char{\K\Gamma}{\K}$ is (as a topological group) isomorphic to the group of group homomorphisms from $\Gamma$ to $\K^\times$ with the pointwise topology.
  \begin{enumerate}
   \item[(b)]  Let $\Gamma=(\Z^{(I)},+)$ be a free abelian of countable infinite rank.
   Then it is easy to see that $\Char{\K\Gamma}{\K}$ is topologically isomorphic to the infinite product $(\K^\times)^I$. 
   This topological group is not locally contractible, hence it can not be (locally) homeomorphic to a topological vector space and thus cannot carry a locally convex manifold structure. 
   In particular, Theorem \ref{thm: Char:Lie} does not generalise to the character group of $\K\Z^{(I)}$.
  \end{enumerate}
 \end{ex}

 If a non-graded Hopf algebra $\Hopf$ is finite dimensional, then $\lcA=\Hopf^*$ is a finite dimensional algebra and hence it is automatically a CIA. The group of characters will then be a (finite dimensional) Lie group.
 However, by \cite[2.2]{MR547117} the characters are linearly independent whence this Lie group will always be finite.
 Hence there cannot be a bijection between the character group and the Lie algebra of infinitesimal characters (which in this case will be $0$-dimensional). 
 This shows that even when $\Char{\Hopf}{\K}$ is a Lie group it may fail to be exponential. We consider a concrete example of this behaviour:
 
 \begin{ex}														\label{ex: functions on a finite group}
  Take a \emph{finite} non trivial group $\Gamma$ and consider the finite dimensional algebra $\Hopf\coloneq\K^\Gamma$ of functions on the group with values in $\K$ together with the pointwise operations. 
  There is a suitable coalgebra structure and antipode (see \cite[Example 1.5.2]{MR1381692}) which turns algebra $\K^\Gamma$ into a Hopf algebra. 
  Furthermore, we can identify its dual with the group algebra $\K\Gamma$ of $\Gamma$ (as \cite[III. Example 3]{MR1321145} shows). 
  
  With this identification we can identify $\Char{\Hopf}{\K}$ with the group $\Gamma$ (with the discrete topology). 
  Obviously, there is no bijection between the group of characters and the $\K$-Lie algebra of infinitesimal characters (which in this case is trivial). 
 \end{ex}

 \section{Character groups as pro-Lie groups}											\label{section: pro-Lie}
  In this section, we show that the group of characters of an abstract Hopf algebra (graded or not) can always be considered as a projective limit of finite dimensional Lie groups, i.e.~the group of characters is a \emph{pro-Lie group}. The range space $B$ has to the ground field $\K\in\smset{\R,\C}$ or a commutative weakly complete algebra (see Lemma \ref{lem: fundamental_lemma_of_weakly_complete_algebras}).

 The category of pro-Lie groups admits a very powerful structure theory which is similar to the theory of finite dimensional Lie groups (see \cite{MR2337107}). 
 This should provide Lie theoretic tools to work with character groups, even in the examples where the methods of locally convex Lie groups do not apply (like Examples \ref{ex: group algebra} and \ref{ex: functions on a finite group}). 
 It should be noted, however, that the concept of a pro-Lie group is of purely topological nature and involves no differential calculus. 
See \cite{MR2475971} for an article dedicated to the problem which pro-Lie groups do admit a locally convex differential structure and which do not.

  \begin{defn}[pro-Lie group]													\label{defn: pro_lie_group}
   A topological group $G$ is called \emph{pro-Lie group} if one of the following equivalent conditions holds:
   \begin{itemize}
    \item [\textup{(a)}] $G$ is topologically isomorphic to a closed subgroup of a product of finite dimensional (real) Lie groups.
    \item [\textup{(b)}] $G$ is the projective limit of a directed system of finite dimensional (real) Lie groups (taken in the category of topological groups)
    \item [\textup{(c)}] $G$ is complete and each identity neighbourhood contains a closed normal subgroup $N$ such that $G/N$ is a finite dimensional (real) Lie group.
   \end{itemize}
  \end{defn}
  The fact that these conditions are equivalent is surprisingly complicated to show and can be found in \cite{MR2377913} or in \cite[Theorem 3.39]{MR2337107}. The class of pro-Lie groups contains all compact groups (see e.g.~\cite[Corollary 2.29]{MR3114697}) and all connected locally compact groups (Yamabe's Theorem, see \cite{MR0054613}). However, this does not imply that all pro-Lie groups are locally compact. In fact, the pro-Lie groups constructed in this paper will almost never be locally compact.

  In absence of a differential structure we cannot define a Lie algebra as a tangent space.
  However, it is still possible to define a Lie functor.

  \begin{setup}[The pro-Lie algebra of a pro-Lie group]
   Let $G$ be a pro-Lie group. 
   Consider the space $\cL(G)$ of all continuous $G$-valued one-parameter subgroups, endowed with the compact-open topology. 
   It is the projective limit of finite dimensional Lie algebras and hence, carries a natural structure of a locally convex topological Lie algebra over $\R$ (see \cite[Definition 2.11]{MR2337107}). 
   As a topological vector space, $\cL(G)$ is weakly complete, i.e.~isomorphic to $\R^I$ for an index set $I$ (see also Definition \ref{def: weakly_complete_space}).

   This assignment is functorial: Assign to a morphism of pro-Lie groups, i.e.~a continuous group homomorphism $\smfunc{\phi}{G}{H}$, a morphism of topological real Lie algebras $\func{\cL(\phi)}{\cL(G)}{\cL(H)}{\gamma}{\gamma\circ\phi}$. 
   For more information on pro-Lie groups, pro-Lie algebras and the pro-Lie functor, see \cite[Chapter 3]{MR2337107}.
  \end{setup}

  Many pro-Lie groups arise as groups of invertible elements of topological algebras:
  \begin{lem}[Fundamental lemma of weakly complete algebras]									\label{lem: fundamental_lemma_of_weakly_complete_algebras}
   For a topological $\K$-algebra $\lcA$, the following are equivalent:
   \begin{itemize}
    \item [\textup{(a)}] The underlying topological vector space $\lcA$ is (forgetting the multiplicative structure) weakly complete, i.e.~isomorphic to $\K^I$ for an index set $I$.
    \item [\textup{(b)}] There is an abstract $\K$-coalgebra $(\CoAlg,\Delta_\CoAlg,\epsilon_\CoAlg)$ such that $\lcA \cong (\Hom(\CoAlg,\K),\star)$.
    \item [\textup{(c)}] The topological algebra $\lcA$ is the projective limit of a directed system of \emph{finite dimensional} $\K$-algebras (taken in the category of topological $\K$-algebras)
   \end{itemize}
   A topological algebra with these properties is called \emph{weakly complete algebra}.
  \end{lem}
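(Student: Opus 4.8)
The plan is to run the cycle of implications (b)$\Rightarrow$(a), (b)$\Rightarrow$(c), (c)$\Rightarrow$(a) and (a)$\Rightarrow$(b), the last one being where the work lies. The implication (b)$\Rightarrow$(a) is immediate from Lemma~\ref{lem: completeness of A}~(a) applied with $\lcB=\K$: the underlying locally convex space of $(\Hom_\K(\CoAlg,\K),\star)$ is isomorphic to $\K^I$, where $I$ has the cardinality of a basis of $\CoAlg$, hence is weakly complete. For (c)$\Rightarrow$(a), a projective limit of finite-dimensional $\K$-algebras, taken in topological algebras and hence in particular in topological vector spaces, is a closed vector subspace of a product $\prod_j\K^{n_j}\cong\K^I$ of finite-dimensional spaces; since closed subspaces of weakly complete spaces are again weakly complete (\cite[Theorem~A2.11]{MR2337107}), $\lcA$ is weakly complete.

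For (b)$\Rightarrow$(c) the idea is to invoke the fundamental theorem of coalgebras: every abstract coalgebra $\CoAlg$ is the directed union of its finite-dimensional subcoalgebras $(\CoAlg_j)_{j\in J}$, i.e.\ $\CoAlg=\varinjlim_j\CoAlg_j$ in the category of coalgebras (see \cite[Section~2.2]{MR0252485}). Applying the contravariant functor $\Hom_\K(-,\K)$ turns this colimit into the limit $\lcA=\Hom_\K(\CoAlg,\K)=\varprojlim_j\Hom_\K(\CoAlg_j,\K)$, and each $\Hom_\K(\CoAlg_j,\K)$, equipped with the transposed (convolution) product, is a finite-dimensional $\K$-algebra. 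One then checks that the topology of pointwise convergence on $\lcA$ coincides with the projective-limit topology, both exhibiting $\lcA$ as a closed subalgebra of $\prod_j\Hom_\K(\CoAlg_j,\K)$, so that $\lcA$ is a projective limit of finite-dimensional $\K$-algebras in the category of topological $\K$-algebras.

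The heart of the proof is (a)$\Rightarrow$(b), for which I would use the duality between the category $\VSCat$ of abstract $\K$-vector spaces and the category $\WCCat$ of weakly complete $\K$-vector spaces, implemented by the mutually inverse contravariant functors $V\mapsto V^{*}=\Hom_\K(V,\K)$ (with the weak${}^*$-topology) and $W\mapsto W'$ (the continuous dual); see Appendix~\ref{app: weakly complete}. The key point is that this anti-equivalence is monoidal: there is a tensor product $\wcotimes$ on $\WCCat$ with $V^{*}\wcotimes W^{*}\cong(V\otimes W)^{*}$ naturally, and continuous bilinear maps of weakly complete spaces $W_1\times W_2\to W_3$ correspond to continuous linear maps $W_1\wcotimes W_2\to W_3$. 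Given $\lcA$ as in (a), set $\CoAlg\coloneq\lcA'$, so that $\CoAlg^{*}\cong\lcA$ in $\WCCat$. The continuous multiplication $m_\lcA\colon\lcA\times\lcA\to\lcA$ then yields a continuous linear map $\lcA\wcotimes\lcA\to\lcA$, i.e.\ $(\CoAlg\otimes\CoAlg)^{*}\to\CoAlg^{*}$, whose transpose is a linear map $\Delta_\CoAlg\colon\CoAlg\to\CoAlg\otimes\CoAlg$; likewise the unit $\K\to\lcA=\CoAlg^{*}$ transposes to a linear map $\epsilon_\CoAlg\colon\CoAlg\to\K$. Since $(-)^{*}$ is an anti-equivalence, associativity and the unit law for $(\lcA,m_\lcA)$ translate into coassociativity and the counit law for $(\CoAlg,\Delta_\CoAlg,\epsilon_\CoAlg)$, so $\CoAlg$ is an abstract coalgebra. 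Finally, unwinding the identifications shows that the convolution product on $\Hom_\K(\CoAlg,\K)=\CoAlg^{*}$, which by definition is $m_\K\circ(h\otimes g)\circ\Delta_\CoAlg$ and hence the transpose of $\Delta_\CoAlg$, is carried to $m_\lcA$ under $\CoAlg^{*}\cong\lcA$; thus $\lcA\cong(\Hom_\K(\CoAlg,\K),\star)$ as topological algebras, which is (b).

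The main obstacle is exactly the monoidal refinement of the vector space / weakly complete space duality that feeds (a)$\Rightarrow$(b): one must know that a continuous bilinear multiplication on a weakly complete space factors through the weakly complete tensor product, and that this tensor product is dual to the algebraic one, so that $m_\lcA$ transposes to a genuinely coassociative comultiplication on $\CoAlg$. This belongs to the structure theory of weakly complete spaces (cf.\ \cite[Appendix~A2]{MR2337107} and Appendix~\ref{app: weakly complete}); once it is in place, each verification above is a routine diagram chase through the anti-equivalence. The only other minor points are the topology-matching claims in (b)$\Rightarrow$(c) and (c)$\Rightarrow$(a), each of which reduces to identifying a projective limit with a closed subspace of a product of finite-dimensional spaces.
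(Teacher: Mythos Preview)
Your argument is correct and follows essentially the same route as the paper: (a)$\Rightarrow$(b) via the monoidal duality between $\VSCat$ and $\WCCat$ (transposing the continuous multiplication to a comultiplication on $\CoAlg=\lcA'$), (b)$\Rightarrow$(c) via the fundamental theorem of coalgebras, and (c)$\Rightarrow$(a) by recognising the projective limit as weakly complete. Your extra implication (b)$\Rightarrow$(a) is redundant for the cycle but harmless, and your identification of the monoidal refinement of the duality as the crux of (a)$\Rightarrow$(b) is exactly right.
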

  \begin{proof}
  \begin{itemize}
   \item[(a)$\Rightarrow$(b)] The category of weakly complete topological vector spaces over $\K$ and the category of abstract $\K$-vector spaces are dual.
    This implies that $\lcA=\K^I$ is the algebraic dual space of the vector space $\CoAlg\coloneq\K^{(I)}$ of finite supported functions.
    The continuous multiplication $\smfunc{\mu_{\lcA}}{\lcA\times\lcA}{\lcA}$ dualises to an abstract comultiplication $\smfunc{\Delta_\CoAlg}{\CoAlg}{\CoAlg\otimes\CoAlg}$. 
    (see \cite[Theorem 4.4]{MR2035107} or Appendix \ref{app: weakly complete} for details of this duality.)
   \item[ (b)$\Rightarrow$(c)]  This is a direct consequence of the \emph{Fundamental Theorem of Coalgebras} (\hspace{-1pt}\cite[Theorem 4.12]{MR2035107}) stating that $\CoAlg$ is the directed union of finite dimensional coalgebras.
    Dualising this, yields a projective limit of topological algebras.
   \item[(c)$\Rightarrow$(a)]  The projective limit of finite dimensional $\K$-vector spaces is always topologically isomorphic to $\K^I$. (see Appendix \ref{app: weakly complete}) \qedhere
  \end{itemize}
 \end{proof}

  \begin{prop}[The group of units of a weakly complete algebra is a pro-Lie group]						\label{prop: group_of_units_pro_lie_group}
   Let $\lcA$ be a weakly complete $\K$-algebra as in Lemma \ref{lem: fundamental_lemma_of_weakly_complete_algebras}. Then the group of units $\lcA^\times$ is a pro-Lie group. Its pro-Lie algebra $\cL(\lcA)$ is (as a \emph{real} Lie algebra) canonically isomorphic to $(\lcA,\LB)$ via the isomorphism
   \[
    \nnfunc{\lcA}{\cL(\lcA^\times)}{x}{\gamma_x:(t\mapsto \exp(tx)),}
   \]
   where $\smfunc{\exp}{\lcA}{\lcA^\times}$ denotes the usual exponential series which converges on $\lcA$.
  \end{prop}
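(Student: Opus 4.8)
The plan is to exhibit $\lcA^\times$ explicitly as a projective limit of finite-dimensional Lie groups and then to read off its pro-Lie algebra from the finite-dimensional pieces. By Lemma \ref{lem: fundamental_lemma_of_weakly_complete_algebras}~(c) we may write $\lcA = \varprojlim_{j\in J}\lcA_j$ as the projective limit of a directed system of finite-dimensional $\K$-algebras, with continuous (unital) algebra homomorphisms $\pi_j\colon\lcA\to\lcA_j$ as limit maps and algebra homomorphisms $p_{ij}\colon\lcA_j\to\lcA_i$ for $i\le j$ as bonding maps. Each $\lcA_j$, being a finite-dimensional associative unital $\K$-algebra, is in particular a finite-dimensional real algebra, so its unit group $\lcA_j^\times$ is an open subset of $\lcA_j$ and a finite-dimensional real Lie group whose Lie algebra is $(\lcA_j,\LB)$ and whose Lie group exponential map is the everywhere convergent exponential series $\exp_j\colon\lcA_j\to\lcA_j^\times$. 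The unital algebra homomorphisms $p_{ij}$ and $\pi_j$ restrict to continuous (hence analytic) group homomorphisms between the corresponding unit groups, and they intertwine the exponential series.

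First I would prove that, as topological groups, $\lcA^\times=\varprojlim_j\lcA_j^\times$. Since each $\pi_j$ is a unital algebra homomorphism it maps $\lcA^\times$ into $\lcA_j^\times$, and the family $(\pi_j(x))_j$ is compatible with the bonding maps; thus $x\mapsto(\pi_j(x))_j$ embeds $\lcA^\times$ into $\varprojlim_j\lcA_j^\times\subseteq\varprojlim_j\lcA_j=\lcA$. Conversely, if $x\in\lcA$ satisfies $\pi_j(x)\in\lcA_j^\times$ for every $j$, then the inverses $\pi_j(x)^{-1}$ again form a compatible family (the $p_{ij}$ are algebra homomorphisms, hence send inverses to inverses), so they define an element $y\in\varprojlim_j\lcA_j=\lcA$ with $\pi_j(xy)=\pi_j(yx)=1_{\lcA_j}$ for all $j$; since the $\pi_j$ separate the points of $\lcA$ and are unital, $xy=yx=1_\lcA$, i.e.\ $x\in\lcA^\times$. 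Finally, the subspace topology on each $\lcA_j^\times$ inherited from $\lcA_j$ is its Lie group topology, and forming projective limits commutes with passing to this subspace topology, so the identification $\lcA^\times=\varprojlim_j\lcA_j^\times$ is a homeomorphism. By Definition \ref{defn: pro_lie_group}~(b) this already shows that $\lcA^\times$ is a pro-Lie group; equivalently, $\varprojlim_j\lcA_j^\times$ is a closed subgroup of the product $\prod_j\lcA_j^\times$ of finite-dimensional real Lie groups, which is Definition \ref{defn: pro_lie_group}~(a).

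For the pro-Lie algebra I would invoke that the Lie functor $\cL$ preserves projective limits of pro-Lie groups \cite[Chapter 3]{MR2337107}, together with the fact that for the finite-dimensional Lie group $\lcA_j^\times$ the canonical map $\lcA_j\to\cL(\lcA_j^\times)$, $x\mapsto(t\mapsto\exp_j(tx))$, is an isomorphism of real Lie algebras carrying $\lcA_j$ equipped with the commutator bracket onto $\cL(\lcA_j^\times)$. Applying $\cL$ to $\lcA^\times=\varprojlim_j\lcA_j^\times$ and using that the $\pi_j$ intertwine $\exp$ and $\exp_j$ (each $\pi_j$ commutes with the finite partial sums and with the limit defining the convolution exponential), one gets a commuting diagram identifying $\cL(\lcA^\times)$ with $\varprojlim_j\lcA_j=\lcA$, the isomorphism being precisely $x\mapsto\gamma_x$ with $\gamma_x(t)=\exp(tx)$; the bracket transported from $\cL(\lcA^\times)$ is the projective limit of the commutator brackets on the $\lcA_j$, hence the commutator bracket on $\lcA$. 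In particular the exponential series converges on all of $\lcA$ since it converges coordinatewise in each finite-dimensional $\lcA_j$.

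The main obstacle I anticipate is the first step: verifying $\lcA^\times=\varprojlim_j\lcA_j^\times$ as topological groups, and in particular that invertibility of $x$ can be tested on all the finite-dimensional quotients $\lcA_j$ simultaneously (including the matching of the two topologies). Once this is established, the pro-Lie group property is immediate from Definition \ref{defn: pro_lie_group}, and the identification of the pro-Lie algebra is a formal consequence of the limit-preservation of $\cL$ combined with the well-understood finite-dimensional case.
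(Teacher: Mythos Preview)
Your proposal is correct and follows essentially the same approach as the paper: write $\lcA=\varprojlim_j\lcA_j$ as a projective limit of finite-dimensional algebras via Lemma~\ref{lem: fundamental_lemma_of_weakly_complete_algebras}, identify $\lcA^\times=\varprojlim_j\lcA_j^\times$ as topological groups, and deduce both the pro-Lie group structure and the description of the pro-Lie algebra from the finite-dimensional pieces. If anything, you supply more detail than the paper does on the step $\lcA^\times=\varprojlim_j\lcA_j^\times$ (which the paper simply asserts), so the ``main obstacle'' you flag is not treated more carefully in the original.
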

  \begin{proof}
   Let $\lcA = \lim_{\leftarrow} \lcA_\alpha$ with finite dimensional $\K$-algebras $\lcA_\alpha$ (by Lemma \ref{lem: fundamental_lemma_of_weakly_complete_algebras}). Then the unit group is given by
   \[
    \lcA^\times = \lim_{\leftarrow} \lcA_\alpha^\times
   \]
   in the category of topological groups.
    Each group $\lcA_\alpha^\times$ is a finite dimensional (linear) real Lie group. 
    Hence, $\lcA^\times$ is a pro-Lie group and in particular, inversion is continuous, which is not obvious for unit groups of topological algebras.

   The exponential series converges on each algebra $\lcA_\alpha$ and hence on the projective limit $\lcA$. The correspondence between continuous one-parameter subgroups $\gamma\in\cL(\lcA^\times)$ and elements in $\lcA$ holds in each $\lcA_\alpha$ and hence it holds on $\lcA$.
  \end{proof}

  \begin{rem}
   As Example \ref{ex: group algebra} shows, this group $\lcA^\times$ need not be an open subset of $\lcA$, nor will the exponential series be local homeomorphism around $0$.
  \end{rem}

  \begin{thm}[The character group of a Hopf algebra is a pro-Lie group]							\label{thm: character_group_is_a_pro_lie_group}
   Let $\Hopf$ be an abstract Hopf algebra and $\lcB$ be a commutative weakly complete $\K$-algebra (e.g.~$\lcB\coloneq\K$ or $\lcB=\K[[X]]$).
   Then the group of $\lcB$-valued characters $\Char{\Hopf}{\lcB}$ endowed with the topology of pointwise convergence is pro-Lie group. 
   
   Its pro-Lie algebra is isomorphic to the locally convex Lie algebra $\InfChar{\Hopf}{\lcB}$ of infinitesimal characters via the canonical isomorphism
   \[
    \nnfunc{\InfChar{\Hopf}{\lcB}}{\cL(\Char{\Hopf}{\lcB})  }{\phi}{(t\mapsto \exp(t\phi)),}
   \]
  \end{thm}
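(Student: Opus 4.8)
The plan is to exhibit $\Char{\Hopf}{\lcB}$ as a closed subgroup of the unit group of the weakly complete convolution algebra $\lcA=(\Hom_\K(\Hopf,\lcB),\star)$ and then to transport the pro-Lie structure already established for such unit groups. First I would check that $\lcA$ is a weakly complete algebra: by Lemma \ref{lem: completeness of A}(a) its underlying locally convex space is $\lcB^I$ with $|I|=\dim_\K\Hopf$, hence isomorphic to $\K^{I\times J}$ since $\lcB\cong\K^J$, while the convolution is continuous by \ref{setup: maps on coalgebra}; thus Lemma \ref{lem: fundamental_lemma_of_weakly_complete_algebras} applies and presents $\lcA$ as a projective limit of finite dimensional $\K$-algebras $\lcA_\alpha$. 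By Proposition \ref{prop: group_of_units_pro_lie_group} the unit group $\lcA^\times$ is then the projective limit of the finite dimensional (real) Lie groups $\lcA_\alpha^\times$, hence a pro-Lie group, with $\cL(\lcA^\times)\cong(\lcA,\LB)$ via $x\mapsto(t\mapsto\exp(tx))$. Since $\Char{\Hopf}{\lcB}$ is a closed subgroup of $\lcA^\times$ by Lemma \ref{lem: char:mult}, it is a closed subgroup of the product $\prod_\alpha\lcA_\alpha^\times$ of finite dimensional Lie groups, hence a pro-Lie group by Definition \ref{defn: pro_lie_group}(a). This settles part (a).

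For part (b), functoriality of the pro-Lie functor \cite{MR2337107} applied to the closed embedding $\Char{\Hopf}{\lcB}\hookrightarrow\lcA^\times$ identifies $\cL(\Char{\Hopf}{\lcB})$, as a topological Lie algebra with the induced topology, with the closed subalgebra $\setm{x\in\lcA}{\exp(tx)\in\Char{\Hopf}{\lcB}\text{ for all }t\in\R}$ of $(\lcA,\LB)$. So it remains to show that this subalgebra coincides with $\InfChar{\Hopf}{\lcB}$. The inclusion ``$\subseteq$'' is the straightforward one: if $\exp(tx)$ is a character for every $t$, then for fixed $a,b\in\Hopf$ the $\lcB$-valued analytic function $t\mapsto\exp(tx)(ab)-\exp(tx)(a)\exp(tx)(b)$ vanishes identically; expanding $\exp(tx)=1_\lcA+tx+O(t^2)$ in $\lcA$ (valid since $\lcA$ is weakly complete) and comparing the coefficients of $t$, using $1_\lcA(c)=\epsilon_\Hopf(c)1_\lcB$ and $\epsilon_\Hopf(ab)=\epsilon_\Hopf(a)\epsilon_\Hopf(b)$, yields $x(ab)=\epsilon_\Hopf(a)x(b)+\epsilon_\Hopf(b)x(a)$, while the same computation at $1_\Hopf$ gives $x(1_\Hopf)=0$; hence $x\in\InfChar{\Hopf}{\lcB}$.

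The reverse inclusion ``$\supseteq$'' is the main obstacle, because $\Hopf$ is not assumed graded or connected, so the familiar argument that $\exp$ carries infinitesimal characters to characters (which relies on a convergent $\log$ inverse to $\exp$) is not available. As $\InfChar{\Hopf}{\lcB}$ is a linear subspace it suffices to show $\exp\phi\in\Char{\Hopf}{\lcB}$ for a single $\phi\in\InfChar{\Hopf}{\lcB}$. The idea is to use the three coalgebra morphisms $m_\Hopf$, $\pi_1\coloneq\id_\Hopf\otimes\epsilon_\Hopf$ and $\pi_2\coloneq\epsilon_\Hopf\otimes\id_\Hopf$ from the tensor-product coalgebra $\Hopf\otimes\Hopf$ to $\Hopf$ (here $m_\Hopf$ is a coalgebra morphism because $\Hopf$ is a bialgebra). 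Precomposition with them yields continuous unital homomorphisms $M,\Pi_1,\Pi_2\colon\lcA\to\cC\coloneq(\Hom_\K(\Hopf\otimes\Hopf,\lcB),\star)$ of convolution algebras; $\cC$ is again weakly complete, so these homomorphisms intertwine the everywhere-convergent exponential series. A short Sweedler computation using the counit axioms shows that $m_\lcB\circ(\psi\otimes\psi)=\Pi_1(\psi)\star\Pi_2(\psi)$ in $\cC$ for every $\psi\in\lcA$, and that $\Pi_1(\psi)$ and $\Pi_2(\psi)$ commute under $\star$ (this is where commutativity of $\lcB$ enters). Applying this to $\psi=\exp\phi$ and using that commuting elements of a weakly complete algebra satisfy $\exp(u)\star\exp(v)=\exp(u+v)$, we obtain
\begin{align*}
 m_\lcB\circ(\exp\phi\otimes\exp\phi) &= \Pi_1(\exp\phi)\star\Pi_2(\exp\phi) = \exp(\Pi_1\phi)\star\exp(\Pi_2\phi)\\
 &= \exp(\Pi_1\phi+\Pi_2\phi) = \exp(M\phi) = M(\exp\phi) = \exp\phi\circ m_\Hopf,
\end{align*}
where the crucial identity $\Pi_1\phi+\Pi_2\phi=M\phi$ is precisely the defining equation \eqref{eq: InfChar:char} of an infinitesimal character. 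Combined with $\phi(1_\Hopf)=0$, which forces $\exp\phi(1_\Hopf)=1_\lcB$, this shows $\exp\phi$ is a character.

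Finally, $\InfChar{\Hopf}{\lcB}$ is a closed Lie subalgebra of $(\lcA,\LB)$ by Lemma \ref{lem: inf:subalg}, so the two inclusions identify $\cL(\Char{\Hopf}{\lcB})$ with $\InfChar{\Hopf}{\lcB}$ as topological Lie algebras via $\phi\mapsto(t\mapsto\exp(t\phi))$, which establishes part (b).
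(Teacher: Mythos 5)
Your proof is correct and follows essentially the same route as the paper: realize $\lcA=\Hom_\K(\Hopf,\lcB)$ as a weakly complete algebra, use Proposition \ref{prop: group_of_units_pro_lie_group} and Lemma \ref{lem: char:mult} to exhibit $\Char{\Hopf}{\lcB}$ as a closed subgroup of the pro-Lie group $\lcA^\times$, and then identify the generators of the one-parameter subgroups lying in the character group with the infinitesimal characters. The only real difference is presentational: where the paper recycles the chain of equivalences from Lemma \ref{lem: exp:bij} (valid here by Remark \ref{rem: salv:wc}) together with uniqueness of one-parameter generators in $\Hom_\K(\Hopf\otimes\Hopf,\lcB)$, you re-derive the same equivalence in place, proving one implication by differentiating at $t=0$ and the other by the same exponential-homomorphism computation, phrased through the coalgebra morphisms $m_\Hopf$, $\id_\Hopf\otimes\epsilon_\Hopf$ and $\epsilon_\Hopf\otimes\id_\Hopf$.
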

  \begin{rem}
   The pro-Lie algebra $\cL(G)$ of a pro-Lie group $G$ is a priori only a \emph{real} Lie algebra\footnote{This is due to the fact that the finite dimensional \emph{real} Lie groups form a full subcategory of the category of topological groups while the finite dimensional \emph{complex} Lie groups do not. In fact, there are infinitely many non-isomorphic complex Lie group structures (elliptic curves) on the torus $(\R/\Z)^2$, inducing the same real Lie group structure.}
   However, since we already know that $\InfChar{\Hopf}{\lcB}$ is a complex Lie algebra if $\K=\C$ (Lemma \ref{lem: inf:subalg}), we may use the isomorphism given in the theorem above to endow the real Lie algebra structure with a complex one.
  \end{rem}

  \begin{proof}[Proof of Theorem \ref{thm: character_group_is_a_pro_lie_group}]
   The space $\lcA\coloneq\Hom_{\K}(\Hopf,\lcB)$ is a topological algebra by \ref{setup: maps on coalgebra}. 
   The underlying topological vector space is isomorphic to $\lcB^I$ by Lemma \ref{lem: completeness of A}. 
   Thus, $\lcA$ is a weakly complete algebra since $\lcB$ is weakly complete.

   By Proposition \ref{prop: group_of_units_pro_lie_group}, we may conclude that $\lcA^\times$ is a pro-Lie group. The group $\Char{\Hopf}{\lcB}$ is a closed subgroup of this pro-Lie group by Lemma \ref{lem: char:mult}. 

   From part (a) of Definition \ref{defn: pro_lie_group} it follows that closed subgroups of pro-Lie groups are pro-Lie groups. Hence, $\Char{\Hopf}{\lcB}$ is a pro-Lie group.

   It remains to show that the pro-Lie algebra $\cL(\Char{\Hopf}{B})$ is isomorphic to $\InfChar{\Hopf}{B}$.

   Since $\Char{\Hopf}{\lcB}$ is a closed subgroup of $\lcA^\times$, every continuous $1$-parameter-subgroup $\gamma$ of $\Char{\Hopf}{\lcB}$ is also a $1$-parameter subgroup of $\lcA^\times$ and (by Proposition \ref{prop: group_of_units_pro_lie_group}) of the form
   \[
    \func{\gamma_\phi}{\R}{\lcA^\times}{t}{\exp(t \phi)}
   \]
   for a unique element $\phi\in\lcA$. It remains to show the following equivalence:
   \[
    \left( \forall t\in\R \colon \exp(t \phi)\in\Char{\Hopf}{\lcB} \right)\quad\iff\quad \phi\in\InfChar{\Hopf}{\lcB}.
   \]
   At the end of the proof of Lemma \ref{lem: exp:bij}, there is a chain of equivalences.
   While the equivalence of the first line with the second uses the bijectivity of the exponential function which does not hold in the pro-Lie setting, the equivalence of the second line with all following lines hold by Remark \ref{rem: salv:wc} also in this setting.
   Substituting $t\phi$ for $\phi$, we obtain the following equivalence:
   \newcommand{\td}{\diamond}
   \[
     \forall t\in\R \colon \exp(t \phi)\in\Char{\Hopf}{\lcB} \quad\iff\quad \exp_{\lcAt}(t\phi \circ m_\Hopf)  	 = 	\exp_{\lcAt}(t (\phi \td 1_\lcA + 1_\lcA \td \phi))			.
   \]
   Note that the exponential function $\exp_{\lcAt}$ is taken in $\lcAt\coloneq\Hom_\K(\Hopf\otimes\Hopf,\lcB)$. 

   This shows that the $1$-parameter subgroups $\gamma_{\phi\circ m_\Hopf}$ and $\gamma_{\phi\td1_\lcA + 1_\lcA\td\phi}$ agree and by Proposition \ref{prop: group_of_units_pro_lie_group} applied to $\lcAt$, we obtain that
   \[
    \phi\circ m_\Hopf = \phi\td1_\lcA + 1_\lcA\td\phi
   \]
   which is equivalent to $\phi$ being an infinitesimal character. This finishes the proof.
  \end{proof}
  \begin{rem} 
   \begin{enumerate}
    \item It is remarkable that Theorem \ref{thm: character_group_is_a_pro_lie_group} holds without any assumption on the given abstract Hopf algebra (in particular, we do not assume that it is graded or connected.)
    \item For a weakly complete commutative algebra $\lcB$ (e.g.\ $\lcB = \K [[X]]$ or $\lcB$ finite dimensional) and a graded and connected Hopf algebra $\Hopf$ the results of Theorem \ref{thm: Char:Lie} and Theorem \ref{thm: character_group_is_a_pro_lie_group} apply both to $\Char{\Hopf}{\lcB}$. 
    
    In this case, the infinite-dimensional Lie group $\Char{\Hopf}{\lcB}$ inherits additional structural properties as a projective limit of finite-dimensional Lie groups. 
    In particular, the regularity of $\Char{\Hopf}{\lcB}$ (cf.\ Theorem \ref{thm: Char_regular}) then follows from \cite{MR2475971}.
    
    For example, these observations apply to the Connes--Kreimer Hopf algebra $\Hopf_{CK}^\K$ and the Butcher group $\Char{\Hopf_{CK}^\K}{\K}$ (see Example \ref{ex: Butcher}).
    In fact, the structure as a pro-Lie group (implicitely) enabled some of the computations made in \cite{BS14} to treat the Lie theoretic properties of the Butcher group.  
   \end{enumerate}\label{rem: lotsastuff}
  \end{rem}
 
%  \section*{Concluding remark}
%  
%  In this paper we studied character groups with values in locally convex algebras. However, to establish regularity conditions for the resulting Lie groups, we had to assume that the target algebra is a continuous inverse algebra (Theorem \ref{thm: Char_regular}).
%  
%  Unfortunately, some examples of target algebras considered in physics, e.g.\ polynomials and meromorphic functions (cf.\ \cite[II.7]{Manchon}), do not belong to the class of continuous inverse algebras.  
%  The authors believe nevertheless that it is still possible to prove that the Lie groups obtained for these examples are regular. 
%  In this case a different set of methods to establish regularity is required and we hope to pursue this direction in a future paper.

 \begin{appendix}
\section{Locally convex differential calculus and manifolds}\label{app: mfd}

 Basic references for differential calculus in locally convex spaces are \cite{MR1911979,keller1974}.
 
\begin{defn}\label{defn: deriv} 
 Let $\K \in \set{\R,\C}$, $r \in \N \cup \set{\infty}$ and $E$, $F$ locally convex $\K$-vector spaces and $U \subseteq E$ open. 
 Moreover we let $f \colon U \rightarrow F$ be a map.
 If it exists, we define for $(x,h) \in U \times E$ the directional derivative 
 $$df(x,h) \coloneq D_h f(x) \coloneq \lim_{t\rightarrow 0} t^{-1} (f(x+th) -f(x)) \quad (\text{where } t \in \K^\times)$$ 
 We say that $f$ is $C^r_\K$ if the iterated directional derivatives
    \begin{displaymath}
     d^{(k)}f (x,y_1,\ldots , y_k) \coloneq (D_{y_k} D_{y_{k-1}} \cdots D_{y_1} f) (x)
    \end{displaymath}
 exist for all $k \in \N_0$ such that $k \leq r$, $x \in U$ and $y_1,\ldots , y_k \in E$ and define continuous maps $d^{(k)} f \colon U \times E^k \rightarrow F$. 
 If it is clear which $\K$ is meant, we simply write $C^r$ for $C^r_\K$.
 If $f$ is $C^\infty_\K$ we say that $f$ is \emph{smooth}.\footnote{A map $f$ is of class $C^\infty_\C$ if and only if it is \emph{complex analytic} i.e.,
  if $f$ is continuous and locally given by a series of continuous homogeneous polynomials (cf.\ \cite[Proposition 1.1.16]{dahmen2011}).
  We then also say that $f$ is of class $C^\omega_\C$.}    
\end{defn}

\begin{defn}
 Let $E$, $F$ be real locally convex spaces and $f \colon U \rightarrow F$ defined on an open subset $U$. 
 We call $f$ \emph{real analytic} (or $C^\omega_\R$) if $f$ extends to a $C^\infty_\C$-map $\tilde{f}\colon \tilde{U} \rightarrow F_\C$ on an open neighbourhood $\tilde{U}$ of $U$ in the complexification $E_\C$.
\end{defn}

For $\K \in \set{\R,\C}$ and $r \in \N_0 \cup \set{\infty, \omega}$ the composition of $C^r_\K$-maps (if possible) is again a $C^r_\K$-map (cf. \cite[Propositions 2.7 and 2.9]{MR1911979}). 

\begin{defn} Fix a Hausdorff topological space $M$ and a locally convex space $E$ over $\K \in \set{\R,\C}$. 
An ($E$-)manifold chart $(U_\kappa, \kappa)$ on $M$ is an open set $U_\kappa \subseteq M$ together with a homeomorphism $\kappa \colon U_\kappa \rightarrow V_\kappa \subseteq E$ onto an open subset of $E$. 
Two such charts are called $C^r$-compatible for $r \in \N_0 \cup \set{\infty,\omega}$ if the change of charts map $\nu^{-1} \circ \kappa \colon \kappa (U_\kappa \cap U_\nu) \rightarrow \nu (U_\kappa \cap U_\nu)$ is a $C^r$-diffeomorphism. 
A $C_\K^r$-atlas of $M$ is a family of pairwise $C^r$-compatible manifold charts, whose domains cover $M$. Two such $C^r$-atlases are equivalent if their union is again a $C^r$-atlas. 

A \emph{locally convex $C^r$-manifold} $M$ modelled on $E$ is a Hausdorff space $M$ with an equivalence class of $C^r$-atlases of ($E$-)manifold charts.
\end{defn}

 Direct products of locally convex manifolds, tangent spaces and tangent bundles as well as $C^r$-maps of manifolds may be defined as in the finite-dimensional setting. 

\begin{defn}
A $\K$-analytic \emph{Lie group} is a group $G$ equipped with a $C^\omega_\K$-manifold structure modelled on a locally convex space, such that the group operations are $\K$-analytic.
For a Lie group $G$ we denote by $\Lf (G)$ the associated Lie algebra.
\end{defn}

\begin{defn}[Baker--Campbell--Hausdorff (BCH-)Lie groups and Lie algebras] \mbox{}
\begin{enumerate}
 \item A Lie algebra $\mathfrak{g}$ is called \emph{Baker--Campbell--Hausdorff--Lie algebra} (BCH--Lie algebra) if there exists an open $0$-neighbourhood $U \subseteq \mathfrak{g}$ such that for $x, y \in U$ the BCH-series $\sum_{n=1}^\infty H_n (x,y)$ converges and defines an analytic function $U \times U \rightarrow \mathfrak{g}$. 
 (The $H_n$ are defined as $H_1 (x,y) = x +y$, $H_2 (x,y) = \frac{1}{2}\LB[x,y]$ and for $n\geq 3$ by sums of iterated brackets, see \cite[Definition IV.1.5.]{MR2261066}.)
  \item A locally convex Lie group $G$ is called \emph{BCH--Lie group} if it satisfies one of the following equivalent conditions (cf.\  \cite[Theorem IV.1.8]{MR2261066}) 
    \begin{enumerate}[(i)]
     \item $G$ is a $\K$-analytic Lie group whose Lie group with an exponential function which is a local analytic diffeomorphism in $0$.
     \item The exponential map of $G$ is a local diffeomorphism in $0$ and $\Lf (G)$ is a BCH--Lie algebra. 
    \end{enumerate}
\end{enumerate}
\end{defn}

\begin{setup}[{Unit groups of CIAs are BCH--Lie groups \cite[Theorem 5.6]{MR1948922}}] \ \\								\label{thm: glockner_CIA_BCH}
 \emph{Let $\lcA$ be a Mackey complete CIA. Then the group of units $\lcA^\times$ is a $\CoAlg^\omega_\K$-Lie group with the manifold structure endowed from the locally convex space $\lcA$. }

 \emph{The Lie algebra of the group $\lcA^\times$ is $(\lcA,[\cdot,\cdot])$, where $[\cdot,\cdot]$ is the commutator bracket.}
  
 \emph{Moreover, the group $\lcA^\times$ is a Baker--Campbell--Hausdorff--Lie group, i.e.~the exponential map is a local $C^\omega_\K$-diffeomorphism around $0$. 
 This exponential map is given by the exponential series and its inverse is locally given by the logarithm series.}
\end{setup}

To establish regularity of unit groups of CIAs in \cite{MR2997582} a sufficient criterion for regularity was introduced. 
This property called \textquotedblleft ($*$)\textquotedblright \, in loc.cit. is recalled now:
 
 \begin{defn}[(GN)-property]								\label{defn: GN_property}
  A locally convex algebra $\lcA$ is said to satisfy the \emph{(GN)-property}, if for every continuous seminorm $p$ on $\lcA$, there exists a continuous seminorm $q$ and a number $M\geq0$ such that for all $n\in\N$, we have the estimate:
  \[
   \norm{\mu_\lcA^{(n)}}_{p,q} \coloneq \sup \{p(\mu_\lcA^{(n)} (x_1, \ldots ,x_n)) \mid q(x_i) \leq 1,\ 1 \leq i \leq n\} \leq M^n.
  \]
  Here, $\mu_\lcA^{(n)}$ is the $n$-linear map 
  %\[
   $\func{\mu_\lcA^{(n)}}{\underbrace{\lcA\times\cdots\times\lcA}_{n}}{\lcA}{(a_1,\ldots,a_n)}{a_1 \cdots a_n}$.
  %\]
%   Equivalently, $\norm{\mu_\lcA^{(n)}}_{p,q}=\norm{m_\lcA^{(n)}}_{p,q^{\otimes n}}$ where $m_\lcA^{(n)}$ is the \emph{linear} map
%   \[
%    \func{m_\lcA^{(n)}}{\underbrace{\lcA\otimes\cdots\otimes\lcA}_{n}}{\lcA}{a_1 \otimes \ldots \otimes a_n}{a_1 \cdots a_n}
%   \]
%   and $\norm{\cdot}_{p,q^{\otimes n}}$ is the seminorm with respect to $p$ and the tensor seminorm $q^{\otimes n}$  (cf.\ \cite[III. 6.3]{MR1741419}) on $\lcA^{\otimes n}$.

 \end{defn}
% The purpose of the (GN)-property is to give a sufficient criterion for the regularity of the unit group.

 \begin{setup}[{\hspace{-1pt}\cite{MR2997582}}]								\label{setup: GN_commutative_locally_m_convex}
 \emph{A locally convex algebra which is either a commutative continuous inverse algebra or locally m-convex has the (GN)-property.}
 \end{setup} 

\begin{setup}[{\hspace{-1pt}\cite[Proposition 3.4]{MR2997582}}]							\label{setup: GN_regular}
 \emph{ Let $\lcA$ be a CIA with the (GN)-property.
  \begin{itemize}
   \item [\textup{(a)}] If $\lcA$ is Mackey complete, then the Lie group $\lcA^\times$ is $C^1$-regular.
   \item [\textup{(b)}] If $\lcA$ is sequentially complete, then $\lcA^\times$ is $C^0$-regular.
  \end{itemize}
  In both cases, the associated evolution map is even $\K$-analytic.}
 \end{setup}

\section{Graded algebra and characters} \label{app: alg}

In this section we recall basic tools from abstract algebra. All results and definitions given in this appendix are well known (see for example \cite{MR0174052,MR1381692,MR1321145,MR0252485}. 
However, for the reader's convenience we recall some details of the construction and proofs.
We assume that the reader is familiar with the definition of algebras, coalgebras and Hopf algebras.

\begin{setup}\label{setup: alg:prop}
For the reader's convenience we summarise important examples of topological algebras and some of their properties discussed in this appendix in the following chart. 
Here the arrows indicate that a given property is stronger than another or that an example possesses the property, respectively.

\addcontentsline{toc}{subsection}{Figure 1: Important properties and examples of locally convex algebras}
\tikzstyle{example} = [rectangle, draw, fill=blue!05, 
    text width=2.45cm, text centered, minimum height=4em]
\tikzstyle{block} = [rectangle, draw,  
    text width=2.45cm, text centered, rounded corners, minimum height=4em]
\tikzstyle{line} = [draw, -latex']
\tikzstyle{white} = [rectangle,
    text width=2.45cm, text centered, rounded corners, minimum height=4em]
 \begin{figure}[h]
 \begin{tikzpicture}[node distance = 1.1cm and .75cm, auto]
    % Place nodes
    \node [example] (init) {$\K \in \{\R, \C\}$};
    \node [block, below= of init] (findim) {finite dimensional algebra};
    \node [white, right=of findim]  (dummy1) {};

    \node [example, right= of dummy1] (formPot) {$\K [[X]]$\\ (Example \ref{ex: formal power series})};
    \node [block, below= of findim] (Banach) {Banach algebra};    
    \node [white, right=of Banach]  (dummy2) {};
    
    \node [white, right=of formPot]  (dummy3) {};

    \node [block, right= of formPot] (com) {commutative CIA};

    \node [block, below= of Banach] (FSpace) {\Frechet algebra};
    \node [white, below= of FSpace] (dummyx) {};
    \node [block, below= of formPot] (CIA) {Continuous inverse algebra (CIA)};
    \node [block, right= of FSpace] (wc) {weakly complete algebra};
    
    \node [block, right= of dummyx] (complete) {complete};
    \node [block, right= of complete] (sqcomp) {sequentially complete};
    \node [block, right= of sqcomp] (mackey) {Mackey complete};
    \node [white, below= of com] (dummy2){};
    \node [block, below= of CIA] (lmcvx)  {locally \\ $m$-convex};
    \node [block, right= of lmcvx] (GN) {(GN)-property\\ (Definition \ref{defn: GN_property})};
    % Draw edges
    \path [line] (init) -- (findim);
    \path [line] (init) -| (com);
    \path [line] (findim) -- (Banach);
    %\path [line] (findim) -- (wc);
    %\path [line] (formPot) -- (wc); 
    \path [line] (wc) -- (lmcvx);
    \path [line] (wc) -- (complete);
    \path [line] (lmcvx) -- (GN);
    \path [line] (com) -- (GN);
    \path [line] (com) -- (CIA);
    \path [line] (Banach) -- (lmcvx);
    \path [line] (formPot) -- (com);
    \path [line] (Banach) -- (FSpace);
    \path [line] (FSpace) -- (complete);
    \path [line] (complete) -- (sqcomp);
    \path [line] (sqcomp) -- (mackey);
    \path [line] (formPot) -- (FSpace);
    \path [line] (Banach) -- (CIA);
    %\path [line] (formPot) -- ++(-5.2cm,-0.0cm)-- ++(-0.0cm,-7.2cm) -- ++(-0.4cm,-0.0cm);
     \path [line] (findim) -- ++(3.2cm,-0.0cm)-- ++(-0.0cm,-4.3cm);
     %\path [line] (Banach) -- ++(-1.6cm,-0.0cm)-- ++(-0.0cm,-5.2cm) -- ++ (0.2cm,-0.0cm);
     \path [line] (formPot) -- ++(-3.2cm,-0.0cm)-- ++(-0.0cm,-4.3cm);
\end{tikzpicture}

\caption{Important properties and examples of locally convex algebras.}
\end{figure}
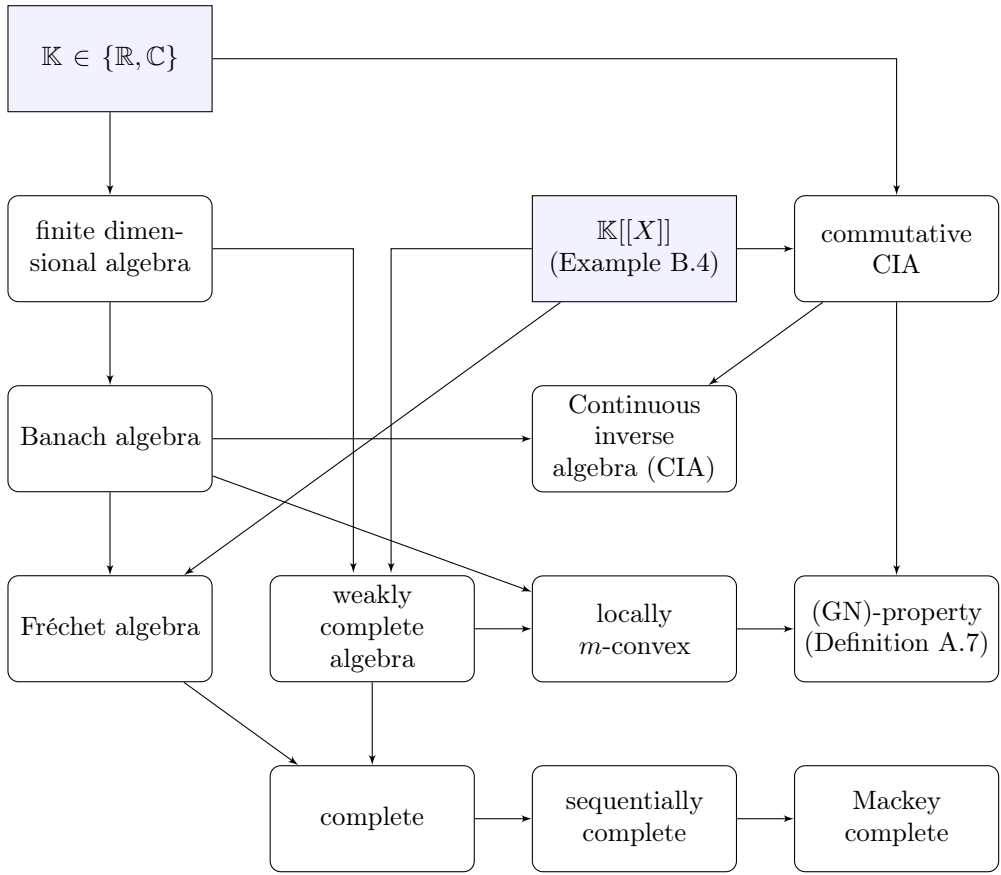

\begin{setup}[Abstract gradings]												
 \ \begin{itemize}
  \item [(a)] Let $\VV$ be an abstract $\K$-vector space. A family of vector subspaces $\seqnN{\VV_n}$ is called (abstract) \emph{$\N_0$-grading} (or just \emph{grading}) of $\VV$, if the canonical linear addition map
		\[
		  \func{\Sigma}{\bigoplus_{n\in\N_0} \VV_n}{E}{\seqnN{v_n}}{\sum_{n\in\N_0}v_n}
		\]
		is an isomorphism of $\K$-vector spaces, i.e.~is bijective.
   \item [(b)] By a \emph{graded algebra}, we mean an abstract $\K$-algebra $\Alg$, together with an abstract grading $\seqnN{\Alg_n}$ of the underlying vector space $\Alg$ such that
		\[
		1_\Alg\in \Alg_0  \hbox{ and } \Alg_n \cdot \Alg_m \subseteq \Alg_{n+m}  \text{ for all } n,m \in \N_0.
		\]
   This implies in particular that $\Alg_0$ is a unital subalgebra and that the projection $\pi_0 \colon \Alg \rightarrow \Alg_0$ onto $\Alg_0$ is an algebra homomorphism.
   \item [(c)] A \emph{graded coalgebra} is an abstract coalgebra $(\CoAlg,\Delta_\CoAlg,\epsilon_\CoAlg)$, together with an abstract grading $\seqnN{\CoAlg_n}$ of the underlying vector space $\CoAlg$ such that for all $n \in \N_0$
		\[
		 \Delta_\CoAlg(\CoAlg_n) \subseteq \bigoplus_{i+j=n} \CoAlg_i \otimes \CoAlg_j \hbox{ and } \bigoplus_{n\geq1} \CoAlg_n \subseteq \ker(\epsilon_\CoAlg).
		\]
	       A graded coalgebra is called \emph{connected} if $\CoAlg_0$ is one dimensional.
   \item [(d)] A \emph{graded Hopf algebra} is an abstract Hopf algebra $(\Hopf,m_\Hopf,u_\Hopf,\Delta_\Hopf,\epsilon_\Hopf)$, together with an abstract grading $\seqnN{\Hopf_n}$ of the underlying vector space $\Hopf$ which is an algebra grading and a coalgebra grading at the same time. 
	       We call a graded Hopf algebra \emph{connected} if $\Hopf_0$ is one dimensional.
	       An element $a \in \Alg_n$ (with $n \in \N_0$) is called \emph{homogeneous (of degree $n$)}. 
 \end{itemize}\label{setup: abstract grading}
\end{setup}

\begin{setup}[Dense Gradings]					\label{setup: dense grading}
  \ \begin{itemize}
   \item [(a)] 
		Let $E$ be a locally convex space. 
		A family of vector subspaces $\seqnN{E_n}$ is called a \emph{dense grading} of $E$, if the canonical linear summation map
		\[
		  \func{\Sigma}{\bigoplus_{n\in\N_0} E_n}{E}{\seqnN{x_n}}{\sum_{n\in\N_0}x_n}
		\]
		can be extended to an isomorphism of topological vector spaces
		\[
		  \smfunc{\overline{\Sigma}}{\prod_{n\in\N_0} E_n}{E}.
		\]
				
		Note that the extension $\overline{\Sigma}$ is unique since $\bigoplus_n E_n$ is dense in $\prod_n E_n$. 
		Furthermore, each $E_n$ is automatically closed in $E$ and $E$ is a \Frechet space if and only if each $E_n$ is a \Frechet space.
   \item [(b)]  By a \emph{densely graded locally convex algebra}, we mean a locally convex algebra $\lcA$, 
		  together with a dense grading of the underlying locally convex space $\lcA$ such that
		\[
		 \lcA_n \cdot \lcA_m \subseteq \lcA_{n+m}\text{ for all } n,m \in \N_0 \text{ and } 1_{\lcA}\in A_0.
		\]
		This implies in particular that $\lcA_0$ is a closed unital subalgebra and that the projection $\smfunc{\pi_0}{\lcA}{\lcA_0}$ is a continuous algebra homomorphism.
		
                Denote the kernel of $\pi_0$ by $\IdealA\coloneq\ker(\pi_0)=\overline{\bigoplus_{n\geq 1} \lcA_n}$. 
                The kernel $\IdealA$ is a closed ideal.
		Each element in $\lcA$ has a unique decomposition $a=a_0 + b$ with $a_0\in\lcA_0$ and $b\in\IdealA$.
  \end{itemize}
\end{setup}

Note that \emph{densely graded algebras} (see \ref{setup: dense grading}(b)) are not included in Diagram \ref{setup: alg:prop} as every locally convex algebra $\lcA$ admits the trivial grading $\lcA_0=\lcA$ and $\lcA_n=0$ for $n\geq1$.

\begin{ex}[Formal power series]												\label{ex: formal power series}
 Let $\K[[X]]$ be the algebra of formal power series in one variable. 
 We give this algebra the topology of pointwise convergence of coefficients, i.e.~the initial topology with respect to the coordinate maps:
 \[
  \func{\kappa_n}{\K[[X]]}{\K}{\sum_{k=0}^\infty c_k X^k}{c_n.}
 \]
 As a topological vector space, the algebra $\K[[X]]$ is isomorphic to the \Frechet space $\K^{\N_0}=\prod_{k=0}^\infty \K$
 We see that $\K[[X]]$ is a densely graded algebra with respect to the grading
 $
  \seqnN{\K X^n=\setm{c X^n}{c\in\K}}.
 $
 Note that $\K[[X]]$ is a CIA by Lemma \ref{lem: unit_groups_of_graded_algebras} (b).
\end{ex}

\end{setup}
In the following, we will identify a densely graded locally convex algebra $\lcA$ with the product space $\prod_{n=0}^\infty\lcA_n$ such that each element $a\in\lcA$ is a tuple $\seqnN{a_n}$.

\begin{lem}[Functional calculus for densely graded algebras]					\label{lem: functional_caluculus}
  Let $\lcA=\prod_{n=0}^\infty \lcA_n$ be a densely graded locally convex algebra with $\IdealA = \ker\pi_0$. 
  Then there exists a unique continuous map
  \[
   \nnfunc{\K[[X]]\times \IdealA }{\lcA}{(f,a)}{f[a]}
  \]
  such that for all $a\in\IdealA$, we have $X[a]=a$ and the map
  \[
   \nnfunc{\K[[X]]}{\lcA}{f}{f[a]}
  \]
  is a morphism of unital algebras.
  If $f=\sum_{k=0}^\infty c_k X^k$ and $a=\seqnN{a_n}$ with $a_0=0$ are given, the following explicit formula holds:
  \begin{equation}\label{eq: fun:explicit}
   f[a] = \seqnN{	\sum_{k=0}^n c_k \sum_{\substack{\alpha\in\N^k \\ 
                                                         \abs{\alpha}=n}} a_{\alpha_1} \cdot \ldots \cdot a_{\alpha_k}	}.
  \end{equation}
  Furthermore, the map $\nnfunc{\K[[X]]\times \IdealA }{\lcA}{(f,a)}{f[a]}$ is a $C^\omega_\K$-map (cf.\ Appendix \ref{app: mfd}).
\end{lem}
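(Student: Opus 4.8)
The plan is to define $f[a]$ componentwise by the formula \eqref{eq: fun:explicit}, verify the required properties, and obtain uniqueness from density of polynomials in $\K[[X]]$. Concretely, for $a=(a_n)_n\in\IdealA$ (so $a_0=0$) and $f=\sum_k c_kX^k$ I would set $\pi_n(f[a])\coloneq\sum_{k=0}^n c_k\sum_{\alpha\in\N^k,\,\abs{\alpha}=n}a_{\alpha_1}\cdots a_{\alpha_k}\in\lcA_n$, a finite sum, so that $f[a]\in\lcA=\prod_n\lcA_n$ is well defined and \eqref{eq: fun:explicit} holds. Using that in a densely graded algebra $\pi_n(xy)=\sum_{i+j=n}\pi_i(x)\pi_j(y)$ --- which follows from continuity and bilinearity of the multiplication together with convergence of the partial sums of $\sum_i a_i$ --- and that $a_0=0$, one checks that for a monomial $X^m$ this formula returns precisely $\pi_n(a^m)$. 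Hence on the subalgebra of polynomials the map $f\mapsto f[a]$ coincides with the unital algebra homomorphism $\Phi_a\colon\K[X]\to\lcA$ with $\Phi_a(X)=a$ (universal property of $\K[X]$); in particular $1[a]=1_\lcA$ and $X[a]=a$.

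For continuity and the morphism property: for each fixed $n$ the map $(f,a)\mapsto\pi_n(f[a])$ is a finite sum of maps $(f,a)\mapsto\kappa_k(f)\cdot\mu_\lcA^{(k)}(\pi_{\alpha_1}(a),\ldots,\pi_{\alpha_k}(a))$, i.e.\ compositions of the continuous linear maps $\kappa_k$ and $\pi_{\alpha_i}$ with the continuous multilinear maps $\mu_\lcA^{(k)}$ and scalar multiplication, hence continuous polynomial maps; as $\lcA$ carries the product topology, $(f,a)\mapsto f[a]$ is continuous. For fixed $a$, $f\mapsto f[a]$ is linear and agrees with $\Phi_a$ on the dense subalgebra $\K[X]\subseteq\K[[X]]$; since $\K[[X]]$ is a topological algebra (Example \ref{ex: formal power series}) and $\lcA$ is Hausdorff, the continuous maps $(f,g)\mapsto(fg)[a]$ and $(f,g)\mapsto f[a]\,g[a]$ on $\K[[X]]^2$ agree on the dense set $\K[X]^2$, so they coincide; together with $1[a]=1_\lcA$ this shows $f\mapsto f[a]$ is a morphism of unital algebras. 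Uniqueness is then immediate: if $H$ is any continuous map with the stated properties, then $f\mapsto H(f,a)$ restricts on $\K[X]$ to $\Phi_a$, while every continuous extension $G$ of $\Phi_a$ satisfies $\pi_n(G(f))=\lim_m\pi_n(\Phi_a(f_{\le m}))$, which for $m\ge n$ is the constant $\pi_n(f[a])$; hence $H(f,a)=f[a]$.

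For the analyticity claim, each component $(f,a)\mapsto\pi_n(f[a])$ is, by \eqref{eq: fun:explicit}, a finite sum of continuous polynomial maps $\K[[X]]\times\IdealA\to\lcA_n$, hence $C^\omega_\K$ (continuous multilinear maps are $C^\omega_\K$, and $C^\omega_\K$ is closed under composition, finite sums and scalar multiples); moreover each such component extends to a continuous polynomial map on the full complexification $(\K[[X]]\times\IdealA)_\C$, which in the real case furnishes a common complex domain for all components. Since a map into a product $\prod_n\lcA_n$ is $C^\omega_\K$ as soon as all its components are, $(f,a)\mapsto f[a]$ is $C^\omega_\K$. The points that require care are the identity $\pi_n(xy)=\sum_{i+j=n}\pi_i(x)\pi_j(y)$ in a merely densely (rather than algebraically) graded algebra, and --- over $\R$ --- securing a common domain of complexification for the countably many component maps; both are handled as above, and the multi-index bookkeeping is routine.
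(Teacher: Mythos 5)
Your proposal is correct and follows essentially the same route as the paper: define $f[a]$ componentwise by the explicit formula, get continuity from the fact that each component is a continuous polynomial in finitely many coordinates, reduce multiplicativity to the monomial identity $X^m[a]=a^m$ plus density of $\K[X]$ in $\K[[X]]$, obtain uniqueness from that same density together with the generator $X$, and handle analyticity by noting the components are continuous polynomials. One caution: your sentence that a map into $\prod_n\lcA_n$ is $C^\omega_\K$ as soon as all components are is false for $\K=\R$ (the paper cites an explicit counterexample), but your argument is saved because, exactly as in the paper, the polynomial components complexify to continuous complex polynomials on the full complexification, giving the common complex domain that makes the componentwise criterion legitimate.
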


\begin{proof}
 First of all, the explicit formula is well-defined and continuous on $\K[[X]]\times \IdealA$ since every component is a continuous polynomial in finitely many evaluations of the spaces $\K[[X]]$ and $\IdealA$.
 As $\lcA$ is densely graded and thus isomorphic to the locally convex product of the spaces $\lcA_n, n\in \N_0$, this implies that the map is continuous. 
 In fact, this already implies that the map is $C^\omega_\K$ for $\K=\C$. 
 For $\K=\R$ one has to be a little bit more careful since there exist maps into products which are not $C^\omega_\R$ although every component is $C^\omega_\R$ (c.f.\ \cite[Example 3.1]{MR2402519}). 
 However, if each component is a continuous polynomial, the real case follows from the complex case as real polynomials complexify to complex polynomials by \cite[Theorem 3]{MR0313810}.
 
 Let $a\in\IdealA$ be a fixed element. It remains to show that $\nnfunc{\K[[X]]}{\lcA}{f}{f[a]}$ is an algebra homomorphism.
 By construction it is clear that $f\mapsto f[a]$ is linear and maps $X^0$ to $1_\lcA$ and $X^1$ to $a$.
 Since $f$ is continuous and linear, it suffices to establish the multiplicativity for series of the form $X^N$, i.e.~it suffices to prove that
 \[
  (X^N[a] )\cdot (X^M[a]) = X^{N+M}[a]
 \]
 which follows from the easily verified fact that $X^N[a]=a^N$.

 To establish uniqueness of the map obtained, we remark the following: 
 A continuous map on $\K[[X]]$ is determined by its values on the dense space of polynomials $\K[X]$, and an algebra homomorphism on $\K[X]$ is determined by its value on the generator $X$. 
 In the case at hand this value has to be $a$.
\end{proof}

\begin{lem}[Exponential and logarithm]									\label{lem: exp_and_log}
 Consider the formal power series
 \[
  \exp(X)\coloneq\sum_{k=0}^\infty \frac{X^k}{k!} \quad \hbox{ and } \quad \log(1+X) \coloneq \sum_{k=1}^\infty (-1)^{k+1} \frac{X^k}{k}.
 \]
 Let $\lcA$ be a densely graded locally convex algebra.
 The exponential function restricted to the closed vector subspace $\IdealA$
 \[
  \exp_\lcA \colon \nnfunc{\IdealA}{1_\lcA + \IdealA}{a}{\exp[a]}
 \]
 is a $C^\omega_\K$-diffeomorphism with inverse
 \[
  \log_\lcA \colon \nnfunc{1_\lcA+\IdealA}{ \IdealA}{(1_\lcA+a)}{\log(1+X)[a].}
 \]
\end{lem}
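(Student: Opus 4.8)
The plan is to obtain the whole statement from the functional calculus of Lemma~\ref{lem: functional_caluculus}, together with the two classical identities $\log\bigl(1+(\exp(X)-1)\bigr)=X$ and $\exp\bigl(\log(1+X)\bigr)=1+X$ in the algebra $\K[[X]]$ of formal power series.

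First I would verify that $\exp_\lcA$ and $\log_\lcA$ take values where claimed. Since $\exp(X)-1=\sum_{k\ge 1}X^k/k!$ has no constant term, for $a\in\IdealA$ the series $\exp_\lcA(a)-1_\lcA=\sum_{k\ge 1}a^k/k!$ converges (Lemma~\ref{lem: functional_caluculus}) with all partial sums in the closed ideal $\IdealA$, hence $\exp_\lcA(a)\in 1_\lcA+\IdealA$; dually $\log(1+X)$ has no constant term, so $\log_\lcA(1_\lcA+a)\in\IdealA$. Smoothness is then immediate from Lemma~\ref{lem: functional_caluculus}: $\exp_\lcA$ is the composition of the $C^\omega_\K$-map $(f,a)\mapsto f[a]$ with the affine (hence $C^\omega_\K$) map $a\mapsto(\exp(X),a)$, corestricted to the closed affine subspace $1_\lcA+\IdealA$; likewise for $\log_\lcA$ with $\log(1+X)$ in place of $\exp(X)$.

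The hard part will be the compatibility of the functional calculus with substitution: for every $h\in\K[[X]]$ with $h(0)=0$ and every $a\in\IdealA$ one should have $(f\circ h)[a]=f\bigl[h[a]\bigr]$ for all $f\in\K[[X]]$. Here $h[a]\in\IdealA$ by the same closed-ideal argument as above, so both sides are defined. The map $f\mapsto f[h[a]]$ is a continuous unital algebra homomorphism $\K[[X]]\to\lcA$ sending $X$ to $h[a]$, by Lemma~\ref{lem: functional_caluculus}. The map $f\mapsto(f\circ h)[a]$ is the composition of the continuous unital algebra homomorphism $f\mapsto f\circ h$ of $\K[[X]]$ (substitution of a series without constant term, which is continuous because each coefficient of $f\circ h$ is a polynomial in finitely many coefficients of $f$ and of $h$) with the continuous homomorphism $g\mapsto g[a]$; it is therefore itself a continuous unital algebra homomorphism $\K[[X]]\to\lcA$ sending $X$ to $h[a]$. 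Two continuous algebra homomorphisms out of $\K[[X]]$ that agree on $X$ agree on the dense subalgebra $\K[X]$, hence everywhere — this is precisely the uniqueness remark from the end of the proof of Lemma~\ref{lem: functional_caluculus} — so the two maps coincide.

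Granting this, the inverse relations are direct computations. Writing $\exp_\lcA(a)=1_\lcA+b$ with $b=(\exp(X)-1)[a]\in\IdealA$, one gets $\log_\lcA(\exp_\lcA(a))=\log(1+X)[b]=\bigl(\log(1+X)\circ(\exp(X)-1)\bigr)[a]=\log\bigl(1+(\exp(X)-1)\bigr)[a]=X[a]=a$; symmetrically $\exp_\lcA(\log_\lcA(1_\lcA+a))=\exp(X)\bigl[\log(1+X)[a]\bigr]=\bigl(\exp(X)\circ\log(1+X)\bigr)[a]=(1+X)[a]=1_\lcA+a$. Thus $\exp_\lcA$ and $\log_\lcA$ are mutually inverse, and since both are $C^\omega_\K$ this exhibits $\exp_\lcA$ as a $C^\omega_\K$-diffeomorphism with inverse $\log_\lcA$. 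Everything beyond the substitution identity is routine manipulation of convergent series inside the closed ideal $\IdealA$.
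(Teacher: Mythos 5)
Your proposal is correct and follows essentially the same route as the paper: both reduce the statement to the functional calculus of Lemma~\ref{lem: functional_caluculus} together with the fact that $\exp(X)-1$ and $\log(1+X)$ are mutually inverse under composition in $\K[[X]]$, giving analyticity for free and the inverse relations by evaluating $(E\circ L)[a]=X[a]=a$ and $(L\circ E)[a]=a$. The only difference is one of detail: you spell out the substitution compatibility $(f\circ h)[a]=f\bigl[h[a]\bigr]$ via the uniqueness/density argument for continuous unital algebra homomorphisms out of $\K[[X]]$, a step the paper's proof leaves implicit by simply invoking the functional calculus applied to $\K[[X]]$ and the elements $E(X),L(X)\in\cI_{\K[[X]]}$.
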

\begin{proof}
 The maps are $C^\omega_\K$ by Lemma \ref{lem: functional_caluculus}. 
 As formal power series $E(X) \coloneq \exp (X)-1$ and $L(X)\coloneq \log (X+1)$ are inverses with respect to composition of power series. 
 Note that $\K [[X]]$ is a densely graded locally convex algebra with $E(X) , L(X) \in \cI_{\K [[X]]}$. 
 Hence, we can apply the functional calculus of Lemma \ref{lem: functional_caluculus} to $\K [[X]]$ and the elements $E(X) , L(X)$.
 This yields for $a \in \IdealA$ the identity 
 $$E[L [a]] = (E\circ L)[a] = X[a] = a.$$
 Note that similarly one proves $L [E[a]] = a$. 
 Therefore, $\exp_\lcA$ and $\log_\lcA$ are mutually inverse mappings. 
\end{proof}

\begin{lem} \label{lem: exp:com}
 Let $\lcA = \prod_{n \in \N_0} \lcA_n$ be a densely graded locally convex algebra with exponential map $\exp_\lcA$. 
 Then the following assertions hold: 
  \begin{enumerate} 
   \item[\textup{(a)}] For $a,b \in \IdealA$ with $ab = ba$ we have $\exp_\lcA (a+b) = \exp_\lcA (a)\exp_\lcA(b)$.
   \item[\textup{(b)}] The derivative of $\exp_\lcA$ at $0$ is $T_0 \exp_\lcA = \id_{\IdealA}$.
  \end{enumerate}
    
\end{lem}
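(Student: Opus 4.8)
The plan is to deduce both statements directly from the explicit series formula \eqref{eq: fun:explicit} of Lemma \ref{lem: functional_caluculus}, working with the identification $\lcA\cong\prod_{n\in\N_0}\lcA_n$: under the product topology, convergence in $\lcA$ is componentwise convergence, every identity in $\lcA$ may be checked degree by degree, and multiplication behaves as for formal power series, i.e.\ the degree-$n$ component of a product depends only on the components of the factors in degrees $\le n$. The one structural fact I would record first is that for $a\in\IdealA$ one has $a^k\in\overline{\bigoplus_{m\ge k}\lcA_m}$ (immediate from $a_0=0$ and $\lcA_i\lcA_j\subseteq\lcA_{i+j}$); consequently the exponential series is ``finite in each degree'', and its partial sums $S_N(a)\coloneq\sum_{k=0}^N a^k/k!$ already coincide with $\exp_\lcA(a)$ in all degrees $\le N$.

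For part (a) I would fix commuting $a,b\in\IdealA$. Since multiplication respects the grading in the above sense, $\exp_\lcA(a)\star\exp_\lcA(b)$ and $S_N(a)\star S_N(b)$ agree in all degrees $\le N$, and likewise $\exp_\lcA(a+b)$ and $S_N(a+b)$; so it suffices to check that $S_N(a)\star S_N(b)$ and $S_N(a+b)$ agree through degree $N$ for every $N$. Expanding $S_N(a)\star S_N(b)=\sum_{0\le k,j\le N}\frac{a^k\star b^j}{k!\,j!}$ and, using $ab=ba$ and the binomial theorem, $S_N(a+b)=\sum_{m=0}^N\frac{(a+b)^m}{m!}=\sum_{\substack{k,j\ge0\\ k+j\le N}}\frac{a^k\star b^j}{k!\,j!}$, one sees that the two differ only by the terms with $k,j\le N$ but $k+j>N$; each such term lies in $\overline{\bigoplus_{m>N}\lcA_m}$ and hence has vanishing components in all degrees $\le N$. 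This gives $\exp_\lcA(a+b)=\exp_\lcA(a)\star\exp_\lcA(b)$.

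For part (b), Lemma \ref{lem: exp_and_log} already tells me $\exp_\lcA$ is $C^1_\K$, so $T_0\exp_\lcA$ is the continuous linear map $h\mapsto D_h\exp_\lcA(0)=\lim_{t\to0}t^{-1}(\exp_\lcA(th)-1_\lcA)$ from $\IdealA=T_0\IdealA$ to $\IdealA=T_{1_\lcA}(1_\lcA+\IdealA)$. Plugging $f=\exp$ (so $c_k=1/k!$) and $a=th$ with $h=(h_n)_{n\in\N_0}\in\IdealA$ into \eqref{eq: fun:explicit}, the degree-$0$ component of $\exp_\lcA(th)$ is the constant $1_{\lcA_0}$, while for $n\ge1$ the degree-$n$ component is $\sum_{k=1}^n\frac{t^k}{k!}\sum_{\substack{\alpha\in\N^k\\ \abs{\alpha}=n}}h_{\alpha_1}\cdots h_{\alpha_k}$, whose $k=1$ term equals $t\,h_n$ and whose remaining terms are divisible by $t^2$. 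Hence, for each $n$, the degree-$n$ component of $t^{-1}(\exp_\lcA(th)-1_\lcA)$ is $h_n$ plus a polynomial in $t$ vanishing at $0$, so it tends to $h_n$ as $t\to0$; componentwise convergence in $\lcA$ then yields $D_h\exp_\lcA(0)=(h_n)_n=h$, i.e.\ $T_0\exp_\lcA=\id_{\IdealA}$.

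Both computations are routine; I expect the only mildly delicate point to be the degree-wise bookkeeping forced by the dense grading — all the rearrangements and limits above are legitimate precisely because in each fixed degree only finitely many terms contribute and $\lcA$ carries the product topology. As an alternative for (a) one could instead extend the functional calculus of Lemma \ref{lem: functional_caluculus} to a bivariate map $\K[[X,Y]]\times\{(a,b)\in\IdealA^2\mid ab=ba\}\to\lcA$ and transport the formal identity $\exp(X+Y)=\exp(X)\exp(Y)$, but setting that up is more work than the direct argument, so I would not take that route.
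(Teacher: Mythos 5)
Your proof is correct and follows essentially the same route as the paper: for (a) the paper likewise splits the product of the exponential series into the terms with $k+l\leq N$ (recombined via the binomial theorem using $ab=ba$) plus a remainder lying in degrees $>N$, and concludes by applying the projections $\pi_j$; for (b) it likewise computes $\pi_n\,d\exp_\lcA(0;a)$ from the explicit formula \eqref{eq: fun:explicit}, with the $k=1$ term giving $a_n$ and the higher terms carrying a factor $t$.
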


\begin{proof}
 By construction of $\exp_\lcA$ we derive from Lemma \ref{lem: functional_caluculus} for $x \in \IdealA$ the formula $\exp_\lcA (x) = \lim_{N \rightarrow \infty} \sum_{k = 0}^N \frac{x^k}{k!}$.
 The algebra $\lcA$ is densely graded and we have for every $n \in \N_0$ a continuous linear projection $\pi_n \colon \lcA \rightarrow \lcA_n$.
 By definition we have for $x \in \IdealA$ that $\pi_0 (x) = 0$. 
 Hence, the definition of a densely graded algebra implies for $x,y \in \IdealA$ that $\pi_j (x^ky^l) = 0$ if $k+l > j$.
 \begin{enumerate}
  \item As $a$ and $b$ commute we can compute as follows: 
  \begin{equation} \label{eq: comm}\begin{aligned}
   &\exp_\lcA (a)\exp_\lcA (b) = \lim_{N \rightarrow \infty} \left(\sum_{k+l \leq N} \frac{a^kb^l}{k!l!} + \vphantom{\sum_{\substack{l+k > N , \\ l,k \leq N}}} \right. \underbrace{\sum_{\substack{l+k > N , \\ l,k \leq N}} \frac{a^kb^l}{k!l!}}_{\equalscolon S_N}\left.\vphantom{\sum_{\substack{l+k > N , \\ l,k \leq N}}}\right) \\
			      =& \lim_{N \rightarrow \infty} \left(\sum_{n=0}^N \sum_{k+l = n} \frac{a^k b^l}{k!l!} + S_N\right) = \lim_{N \rightarrow \infty} \left(\sum_{n=0}^N \frac{(a + b)^n}{n!} + S_N\right)
			      \end{aligned}
  \end{equation}
 The first summand in the lower row converges to $\exp_\lcA (a+b)$. 
 
 Now the definition of $S_N$ shows that $\pi_j (S_N) =0$ if $N \geq j$.
 Apply the continuous map $\pi_j$ to both sides of \eqref{eq: comm} for $j \in \N_0$ to derive 
  \begin{displaymath}
   \pi_j (\exp_\lcA (a)\exp_\lcA (b)) = \lim_{N \rightarrow \infty} \left( \pi_j \left(\sum_{n=0}^N \frac{(a + b)^n}{n!}\right) + \pi_j (S_N)\right)
  \end{displaymath}
 On the right hand side the second term vanishes if $N > j$. 
 Thus in passing to the limit we obtain $\pi_j (\exp_\lcA (a)\exp_\lcA (b)) = \pi_j (\exp_\lcA (a + b))$ for all $j \in \N_0$.
 \item The image of $\exp_\lcA$ is the affine subspace $1_\lcA + \IdealA$ whose tangent space (as a submanifold of $\lcA$) is $\IdealA$. 
 We can thus identify $\exp_\lcA$ with $F \colon \IdealA \rightarrow \lcA , a \mapsto \exp[a]$ to compute $T_0 \exp_\lcA$ as $d F(0; \cdot)$. 
 The projections $\pi_n, n \in \N_0$ are continuous linear, whence it suffices to compute $d \pi_n \circ F (0;\cdot) = \pi_n dF (0;\cdot)$ for all $n \in \N_0$.
 Now for $n \in \N_0$ and $a \in \IdealA$ compute the derivative in $0$:
  \begin{align*}
   \pi_n dF (0;a) &\stackrel{\hphantom{\eqref{eq: fun:explicit}}}{=} d \pi_n \circ F (0;a) = \lim_{t \rightarrow 0} t^{-1} (\pi_n \circ F (ta) - \pi_n \circ F(0)) \\
		  &\stackrel{\eqref{eq: fun:explicit}}{=}  \lim_{t \rightarrow 0} \sum_{k = 1}^n \frac{1}{k!} \sum_{\overset{\alpha \in \N^k}{|\alpha| = n}} t^{k-1} a_{\alpha_1} \cdot \ldots \cdot a_{\alpha_k} = a_n = \pi_n \circ \id_{\IdealA} (a) \qedhere
  \end{align*}
 \end{enumerate}
\end{proof}

\begin{lem}[Unit groups of densely graded algebras]									\label{lem: unit_groups_of_graded_algebras}
 Let $\lcA=\prod_{n=0}^\infty\lcA_n$ be a densely graded locally convex algebra.
 \begin{itemize}
  \item [\textup{(a)}] An element $a\in\lcA$ with decomposition $a = a_0 + b$ is invertible in $\lcA$ if and only $a_0$ is invertible in $\lcA_0$.
  \item [\textup{(b)}] The algebra $\lcA$ is a CIA if and only if $\lcA_0$ is a CIA.
  In particular, if $\lcA_0=\K$, then $\lcA$ is a CIA.
 \end{itemize}
\end{lem}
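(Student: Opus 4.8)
The plan is to establish part (a) first and then bootstrap part (b) from it, using the functional calculus of Lemma~\ref{lem: functional_caluculus} as the only nontrivial tool. For one direction of (a), recall that the projection $\pi_0\colon\lcA\to\lcA_0$ is a continuous unital algebra homomorphism (see \ref{setup: dense grading}(b)): if $a=a_0+b$ is invertible in $\lcA$ with inverse $c$, then $a_0=\pi_0(a)$ has inverse $\pi_0(c)$ in $\lcA_0$. For the converse, suppose $a_0$ is invertible in $\lcA_0$, with inverse $a_0^{-1}\in\lcA_0\subseteq\lcA$. Since $\IdealA$ is an ideal, $x\coloneq -a_0^{-1}b\in\IdealA$ and one may factor $a=a_0\,(1_\lcA-x)$. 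Now I would invoke the functional calculus: the formal power series $g(X)\coloneq\sum_{k=0}^\infty X^k$ satisfies $(1-X)g(X)=1$ in $\K[[X]]$, and because $f\mapsto f[x]$ is a morphism of unital algebras $\K[[X]]\to\lcA$ by Lemma~\ref{lem: functional_caluculus}, we get $(1_\lcA-x)\,g[x]=1_\lcA=g[x]\,(1_\lcA-x)$, so $1_\lcA-x\in\lcA^\times$; hence $a$ is a product of units and thus a unit.

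For part (b), I would first record two consequences of (a): applying (a) with $b=0$ gives $\lcA_0^\times=\lcA_0\cap\lcA^\times$, and moreover the inverse in $\lcA$ of an element of $\lcA_0^\times$ already lies in $\lcA_0$ (the inverse supplied by (a) in $\lcA_0$ is, by uniqueness, the inverse in $\lcA$). Assume first that $\lcA_0$ is a CIA. By (a), $\lcA^\times=\{a_0+b\mid a_0\in\lcA_0^\times,\ b\in\IdealA\}=\pi_0^{-1}(\lcA_0^\times)$, which is open since $\pi_0$ is continuous and $\lcA_0^\times$ is open. For continuity of inversion write $a=a_0(1_\lcA-x)$ with $a_0=\pi_0(a)$ and $x=x(a)=1_\lcA-a_0^{-1}a\in\IdealA$; then $a^{-1}=g[x]\,a_0^{-1}$. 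Here $a\mapsto a_0^{-1}$ is continuous on $\lcA^\times$ (it is $\pi_0$ followed by inversion in the CIA $\lcA_0$ followed by the inclusion $\lcA_0\hookrightarrow\lcA$), hence $a\mapsto x(a)$ is continuous into $\IdealA$, and $x\mapsto g[x]$ is continuous by Lemma~\ref{lem: functional_caluculus}; composing with the continuous multiplication of $\lcA$ shows $a\mapsto a^{-1}$ is continuous, so $\lcA$ is a CIA. Conversely, if $\lcA$ is a CIA, then $\lcA_0^\times=\lcA_0\cap\lcA^\times$ is open in the subspace $\lcA_0$, and inversion in $\lcA_0$ is the restriction to $\lcA_0^\times$ of inversion in $\lcA$ (the inverses agree by the remark above), hence continuous; so $\lcA_0$ is a CIA. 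The final assertion is then immediate, since $\K$ is trivially a CIA.

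I expect the only genuine subtlety to be the bookkeeping between invertibility in $\lcA_0$ and in $\lcA$ — i.e.\ checking that inverses of elements of $\lcA_0$ do not leak out of $\lcA_0$, which is precisely what (a) guarantees — together with the observation that the Neumann series $g[x]$ delivered by the functional calculus is not merely an algebraic inverse of $1_\lcA-x$ but depends continuously (indeed $C^\omega_\K$-ly) on $x\in\IdealA$; this is exactly what upgrades the set-theoretic statement (a) to the topological statement (b). Everything else reduces to routine manipulation with the continuous unital homomorphisms $\pi_0$ and $\lcA_0\hookrightarrow\lcA$ and the joint continuity of multiplication in $\lcA$.
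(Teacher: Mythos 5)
Your proposal is correct and follows essentially the same route as the paper: one direction of (a) via the unital algebra homomorphism $\pi_0$, the other via the factorisation $a=a_0(1_\lcA-x)$ and the Neumann series $(1-X)^{-1}=\sum_k X^k$ applied through the functional calculus of Lemma~\ref{lem: functional_caluculus}, and then (b) from $\lcA^\times=\pi_0^{-1}(\lcA_0^\times)$ together with the explicit inversion formula $a^{-1}=g[x]\,a_0^{-1}$ and the continuity (indeed analyticity) of the functional calculus. The only difference is cosmetic: you also spell out the easy converse in (b) (inversion in $\lcA_0$ as a restriction of inversion in $\lcA$), which the paper leaves implicit.
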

\begin{proof} 
  \begin{itemize}
   \item[(a)] The map $\smfunc{\pi_0}{\lcA}{\lcA_0}$ is an algebra homomorphism. 
    This implies that invertible elements $a\in\lcA$ are mapped to invertible elements $a_0\in\lcA_0$.
    For the converse, take an element $a\in\lcA$ with decomposition $a=a_0  + b$ with $b\in\IdealA$ and $a_0$ is invertible in $\lcA_0$.
    Then we may multiply by $a_0^{-1}$ from the left and obtain the equality
    \[
      a_0^{-1} a = 1 + a_0^{-1} b.
    \]
    This shows that $a$ is invertible if we are able to prove that $1+a_0^{-1}b$ is invertible. 
    Apply the formal power series
    \[
      (1-X)^{-1}=\sum_{k=0}^\infty X^k
    \]
    to the element $-a_0^{-1}b\in\IdealA$ and obtain the inverse of $1+ a_0^{-1}b$.
    \item[(b)]  We have seen in part (a) that the units in the algebra $\lcA$ satisfy
    \[
      \lcA^\times = \pi_0^{-1}(\lcA_0^\times)
    \]
    and hence one of the unit groups is open if and only if the other one is open. 
    It remains to establish that continuity of inversion in $\lcA_0^\times$ implies continuity of inversion in $\lcA^\times$.
    In part (a) we have seen that inversion of $a = a_0 + b$ in $\lcA$ is given by
    \[
      a^{-1} = \left((1-X)^{-1}\right) \left[ -a_0^{-1} b \right] \cdot a_0^{-1}
    \]
    So, the continuity of inversion in $\lcA$ follows from the continuity of inversion in $\lcA_0$ and the continuity of the functional calculus (Lemma \ref{lem: functional_caluculus}). \qedhere
  \end{itemize}
\end{proof}

\begin{lem}\label{lemma: red2CIA}
 Let $\lcA = \prod_{n \in \N_0} \lcA_n$ be a densely graded locally convex algebra over the field $\K \in \{ \R , \C \}$. 
 Then $\Cut{\lcA} \coloneq \K 1_{\lcA_0} \times \prod_{n \in \N} \lcA_n \subseteq \lcA$ is a closed subalgebra of $\lcA$. 
 Furthermore, $\Cut{\lcA}$ is densely graded with respect to the grading induced by $(\lcA_n)_{n\in \N_0}$ and $\Cut{\lcA}$ is a CIA.
\end{lem}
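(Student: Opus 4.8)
The plan is to verify the three assertions — that $\Cut{\lcA}$ is a closed subalgebra, that it is densely graded, and that it is a CIA — essentially by unwinding the definitions, the only external input being Lemma \ref{lem: unit_groups_of_graded_algebras} for the last point. Throughout I use the identification of $\lcA$ with $\prod_{n\in\N_0}\lcA_n$ fixed above.

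First I would observe that, as a subset of $\prod_{n\in\N_0}\lcA_n$, the space $\Cut{\lcA}$ is precisely the product $\K 1_{\lcA_0}\times\prod_{n\in\N}\lcA_n$ equipped with the product topology inherited from $\lcA$. Since $\K 1_{\lcA_0}$ is a finite-dimensional, hence closed, subspace of the Hausdorff space $\lcA_0$, and each $\lcA_n$ is closed in itself, $\Cut{\lcA}$ is a closed vector subspace of $\lcA$. To see it is a subalgebra, note that $1_\lcA = 1_{\lcA_0}\in\K 1_{\lcA_0}\subseteq\Cut{\lcA}$, and that for $a,b\in\Cut{\lcA}$ the degree-zero component of the product satisfies $\pi_0(ab)=\pi_0(a)\pi_0(b)$ because $\pi_0\colon\lcA\to\lcA_0$ is an algebra homomorphism; as $\pi_0(a),\pi_0(b)\in\K 1_{\lcA_0}$ this product lies in $\K 1_{\lcA_0}$, while all higher components of $ab$ automatically lie in the corresponding $\lcA_n$. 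Hence $ab\in\Cut{\lcA}$.

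Next I would equip $\Cut{\lcA}$ with the grading $\Cut{\lcA}_0\coloneq\K 1_{\lcA_0}$ and $\Cut{\lcA}_n\coloneq\lcA_n$ for $n\geq 1$. By the identification of the previous paragraph, $\Cut{\lcA}$ is, as a topological vector space, the product $\prod_{n\in\N_0}\Cut{\lcA}_n$, so the isomorphism required by the definition of a dense grading holds. The multiplicativity $\Cut{\lcA}_n\cdot\Cut{\lcA}_m\subseteq\Cut{\lcA}_{n+m}$ is inherited from $\lcA$ for $n,m\geq 1$; for $n=0$ it reads $\K 1_{\lcA_0}\cdot\lcA_m=\lcA_m\subseteq\Cut{\lcA}_m$ (using $1_{\lcA_0}\cdot a=a$), and symmetrically for $m=0$, while $\K 1_{\lcA_0}\cdot\K 1_{\lcA_0}=\K 1_{\lcA_0}$. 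Together with $1_{\Cut{\lcA}}=1_{\lcA_0}\in\Cut{\lcA}_0$, this shows that $\Cut{\lcA}$ is a densely graded locally convex algebra with respect to $(\Cut{\lcA}_n)_{n\in\N_0}$.

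Finally, since $\Cut{\lcA}_0=\K 1_{\lcA_0}\cong\K$ and $\K$ is a CIA (its unit group $\K^\times$ is open and inversion is continuous), Lemma \ref{lem: unit_groups_of_graded_algebras}(b) applied to the densely graded algebra $\Cut{\lcA}$ gives that $\Cut{\lcA}$ is a CIA. I do not expect a genuine obstacle here; the one point needing a moment's care is that multiplication does not leave $\Cut{\lcA}$, which is exactly where the homomorphism property of $\pi_0$ enters, together with the observation that replacing $\lcA_0$ by $\K 1_{\lcA_0}$ does not affect any degree $n\geq 1$.
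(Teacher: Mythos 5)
Your proposal is correct and follows essentially the same route as the paper: check that $\Cut{\lcA}$ is a closed subalgebra (the paper dismisses this as clear, while you spell out the use of the algebra homomorphism $\pi_0$ and closedness of $\K 1_{\lcA_0}$), note that the induced product decomposition gives the dense grading, and conclude it is a CIA from $\Cut{\lcA}_0\cong\K$ via Lemma \ref{lem: unit_groups_of_graded_algebras}~(b).
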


\begin{proof}
 Clearly $\Cut{\lcA}$ is a subalgebra of $\lcA$ and the subspace topology turns this subalgebra into a locally convex algebra over $\K$. 
 By definition $\Cut{\lcA}$ is the product of the (closed) subspaces $(\Cut{\lcA}_n)_{n \in \N_0}$. Hence $\Cut{\lcA}$ is a closed subalgebra of $\lcA$ with dense grading $(\Cut{\lcA}_n)_{n \in \N_0}$.
 Finally we have the isomorphism of locally convex algebras $\Cut{\lcA}_0 = \K 1_{\lcA_0} \cong \K$. Hence $\Cut{\lcA}_0$ is a CIA and thus $\Cut{\lcA}$ is a CIA by Lemma \ref{lem: unit_groups_of_graded_algebras} (b).
\end{proof}

\subsection*{Auxiliary results concerning characters of Hopf algebras}
\addcontentsline{toc}{subsection}{Auxiliary results concerning characters of Hopf algebras}

  Fix for the rest of this section a $\K$-Hopf algebra $\Hopf=(\Hopf,m_\Hopf,u_\Hopf,\Delta_\Hopf,\epsilon_\Hopf,S_\Hopf)$ and a commutative locally convex algebra $\lcB$.
  Furthermore, we assume that the Hopf algebra $\Hopf$ is graded and connected, i.e.\ $\Hopf = \bigoplus_{n \in \N_0} \Hopf$ and $\Hopf_0 \cong \K$.
  The aim of this section is to prove that the exponential map $\exp_\lcA$ of $\lcA \coloneq \Hom_\K (\Hopf, \lcB)$ restricts to a bijection from the infinitesimal characters to the characters.

 \begin{lem}[Cocomposition with Hopf multiplication]											\label{lem: dual_comult}
  Let $\Hopf \otimes \Hopf$ be the tensor Hopf algebra (cf.\ \cite[p. 8]{MR1381692}).
  With respect to the topology of pointwise convergence and the convolution product, the algebras
   \begin{align*}
    \lcA \coloneq \Hom_\K (\Hopf,\lcB) \quad \quad \lcAt \coloneq \Hom_\K (\Hopf \otimes \Hopf , \lcB) 
   \end{align*}
   become locally convex algebras (see \ref{setup: maps on coalgebra}). 
  This structure turns
  \[
   \func{\cdot \circ m_\Hopf}{\Hom_\K(\Hopf,\lcB)}{\Hom_\K(\Hopf\otimes \Hopf,\lcB)}{\phi}{\phi\circ m_\Hopf.}
  \]
  into a continuous algebra homomorphism.
 \end{lem}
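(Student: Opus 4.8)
The plan is to verify directly that the map $\cdot\circ m_\Hopf$ is linear, continuous and multiplicative, since unitality is then almost immediate. Linearity is clear because precomposition with any fixed linear map is linear. For continuity, recall that both $\lcA$ and $\lcAt$ carry the topology of pointwise convergence; a net (or, since these are products, a point) in $\lcAt$ converges iff it converges after evaluation at every element of $\Hopf\otimes\Hopf$, and the elements $a\otimes b$ with $a,b\in\Hopf$ span $\Hopf\otimes\Hopf$. Thus it suffices to observe that $\phi\mapsto(\phi\circ m_\Hopf)(a\otimes b)=\phi(ab)$ is continuous for each fixed $a,b$, which holds because point evaluations on $\lcA$ are continuous by the definition of the pointwise topology (see \ref{setup: maps on coalgebra}). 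Hence $\cdot\circ m_\Hopf$ is continuous and linear.

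The main point is multiplicativity, i.e.\ that $(\phi\star\psi)\circ m_\Hopf=(\phi\circ m_\Hopf)\star(\psi\circ m_\Hopf)$ as elements of $\lcAt$. Here the first $\star$ is the convolution on $\lcA$ (built from $\Delta_\Hopf$) and the second is the convolution on $\lcAt$ (built from the coproduct $\Delta_{\Hopf\otimes\Hopf}$ of the tensor Hopf algebra). I would unwind both sides on a generator $a\otimes b$ using Sweedler notation. On the left, $((\phi\star\psi)\circ m_\Hopf)(a\otimes b)=(\phi\star\psi)(ab)=\sum_{(ab)}\phi((ab)_1)\psi((ab)_2)$, and since $\Delta_\Hopf$ is an algebra homomorphism, $\Delta_\Hopf(ab)=\sum_{(a)(b)}a_1b_1\otimes a_2b_2$, so this equals $\sum_{(a)(b)}\phi(a_1b_1)\psi(a_2b_2)$. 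On the right, the coproduct of $\Hopf\otimes\Hopf$ sends $a\otimes b$ to $\sum_{(a)(b)}(a_1\otimes b_1)\otimes(a_2\otimes b_2)$ (up to the flip identifying $(\Hopf\otimes\Hopf)\otimes(\Hopf\otimes\Hopf)$ appropriately — this is exactly the definition of the tensor-product coalgebra structure), so $((\phi\circ m_\Hopf)\star(\psi\circ m_\Hopf))(a\otimes b)=\sum_{(a)(b)}(\phi\circ m_\Hopf)(a_1\otimes b_1)\,(\psi\circ m_\Hopf)(a_2\otimes b_2)=\sum_{(a)(b)}\phi(a_1b_1)\psi(a_2b_2)$. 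The two expressions agree. Finally, $1_\lcA=u_\lcB\circ\epsilon_\Hopf$ composed with $m_\Hopf$ gives $u_\lcB\circ\epsilon_\Hopf\circ m_\Hopf=u_\lcB\circ(\epsilon_\Hopf\otimes\epsilon_\Hopf)$, which is precisely the convolution unit $1_{\lcAt}$ of $\lcAt$, since $\epsilon_{\Hopf\otimes\Hopf}=\epsilon_\Hopf\otimes\epsilon_\Hopf$.

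The only genuine obstacle is bookkeeping: one must be careful that the coproduct used on $\lcAt$ is the one coming from the \emph{tensor Hopf algebra} structure on $\Hopf\otimes\Hopf$ (which involves a flip of the middle two tensor factors), and that it is precisely this flip which makes the two Sweedler expansions line up. Once the conventions are pinned down as in \cite[p.~8]{MR1381692}, the computation is a one-line matching of sums, and commutativity of $\lcB$ is not even needed here (it will be needed elsewhere). I would therefore devote most of the written proof to fixing notation for $\Delta_{\Hopf\otimes\Hopf}$ and then simply display the two evaluations side by side.
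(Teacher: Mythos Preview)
Your proposal is correct and follows essentially the same approach as the paper, which simply remarks that the algebra-homomorphism property follows from the standard bialgebra identities (in particular that $\Delta_\Hopf$ and $\epsilon_\Hopf$ are algebra homomorphisms) and that continuity is clear for the pointwise topologies. You have merely unpacked these identities in Sweedler notation, which is exactly the computation the paper leaves implicit.
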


 \begin{proof}
  From the usual identities for the structure maps of Hopf algebras (cf.\ \cite[p.7 Fig. 1.3]{MR1381692}) it is easy to see that $\cdot \circ m_\Hopf$ is an algebra homomorphism.
  Clearly $\cdot \circ m_\Hopf$ is continuous with respect to the topologies of pointwise convergence.
 \end{proof}

  \begin{lem}\label{lem: exp:bij}
  The analytic diffeomorphism $\smfunc{\exp_{\lcA}}{\IdealA}{1+\IdealA}$ maps the set of infinitesimal characters $\InfChar{\Hopf}{\lcB}$ bijectively onto the set of characters $\Char{\Hopf}{\lcB}$.\footnote{It is hard to find a complete proof in the literature, whence we chose to include a proof for the reader's convenience.}
 \end{lem}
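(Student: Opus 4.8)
The plan is to push the multiplicativity conditions that single out characters and infinitesimal characters into the auxiliary algebra $\lcAt\coloneq\Hom_\K(\Hopf\otimes\Hopf,\lcB)$ of Lemma \ref{lem: dual_comult}, where the exponential series is available and, crucially, \emph{injective}, and then to transport the statement back along three algebra homomorphisms $\lcA\to\lcAt$ that all intertwine the functional calculus exponential. Since $\Hopf$ is graded and connected, the tensor Hopf algebra $\Hopf\otimes\Hopf$ is graded and connected as well — its degree-$0$ part is $\Hopf_0\otimes\Hopf_0\cong\K$ — so by Lemma \ref{lem: grading and CIA} the algebra $\lcAt$ is densely graded, and by Lemma \ref{lem: exp_and_log} the map $\exp_{\lcAt}\colon\cI_{\lcAt}\to 1_{\lcAt}+\cI_{\lcAt}$ is an analytic diffeomorphism, in particular injective. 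Here $\cI_{\lcAt}$ denotes the augmentation ideal of the densely graded algebra $\lcAt$.

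Next I would introduce the bilinear operation $\lcA\times\lcA\to\lcAt$, $(f,g)\mapsto f\diamond g$, with $(f\diamond g)(a\otimes b)\coloneq f(a)\,g(b)$ (product taken in $\lcB$), and record the reformulations: $\phi\in\Hom_\K(\Hopf,\lcB)$ is a character exactly when $\phi(1_\Hopf)=1_\lcB$ and $\phi\circ m_\Hopf=\phi\diamond\phi$, and $\phi$ is an infinitesimal character exactly when $\phi\circ m_\Hopf=\phi\diamond 1_\lcA+1_\lcA\diamond\phi$; in the latter case evaluation at $1_\Hopf\otimes 1_\Hopf$ forces $\phi(1_\Hopf)=0$, hence $\phi\in\IdealA$ by connectedness, in accordance with \ref{setup: infcharIdealA}. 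Likewise $\Char{\Hopf}{\lcB}\subseteq 1_\lcA+\IdealA$.

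The heart of the argument is then a handful of short Sweedler computations with $\Delta_{\Hopf\otimes\Hopf}(a\otimes b)=\sum_{(a)(b)} a_1\otimes b_1\otimes a_2\otimes b_2$ showing that the three maps $\rho\colon\phi\mapsto\phi\circ m_\Hopf$, $L\colon\phi\mapsto\phi\diamond 1_\lcA$ and $R\colon\phi\mapsto 1_\lcA\diamond\phi$ are continuous unital algebra homomorphisms $\lcA\to\lcAt$ (for $\rho$ this is Lemma \ref{lem: dual_comult}), each carrying $\IdealA$ into $\cI_{\lcAt}$, that the images of $L$ and $R$ commute in $\lcAt$, and that $L(\phi)\star R(\psi)=\phi\diamond\psi$ for all $\phi,\psi\in\lcA$. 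I would also use the elementary fact that a continuous unital algebra homomorphism between densely graded locally convex algebras carrying the augmentation ideal of the source into that of the target intertwines the two functional calculus exponentials — this follows by applying the homomorphism termwise to the convergent partial sums $\sum_{k\le N}\phi^k/k!$ (cf.\ the proof of Lemma \ref{lem: exp:com}) and passing to the limit.

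With this in hand, fix $\phi\in\IdealA$ and set $\psi\coloneq\exp_\lcA(\phi)\in 1_\lcA+\IdealA$; since every element of $\IdealA$ vanishes on $\Hopf_0\ni 1_\Hopf$, we have $\psi(1_\Hopf)=1_\lcA(1_\Hopf)=1_\lcB$ automatically. Using $\rho(\psi)=\exp_{\lcAt}(\rho\phi)$, $L(\psi)=\exp_{\lcAt}(L\phi)$, $R(\psi)=\exp_{\lcAt}(R\phi)$ together with Lemma \ref{lem: exp:com}(a) applied to the commuting pair $L\phi,R\phi\in\cI_{\lcAt}$, I obtain the chain
\begin{align*}
 \psi\in\Char{\Hopf}{\lcB}
   &\iff \rho(\psi)=\psi\diamond\psi=L(\psi)\star R(\psi) \\
   &\iff \exp_{\lcAt}\bigl(\phi\circ m_\Hopf\bigr)=\exp_{\lcAt}\bigl(\phi\diamond 1_\lcA+1_\lcA\diamond\phi\bigr) \\
   &\iff \phi\circ m_\Hopf=\phi\diamond 1_\lcA+1_\lcA\diamond\phi \\
   &\iff \phi\in\InfChar{\Hopf}{\lcB},
\end{align*}
where the third equivalence uses the injectivity of $\exp_{\lcAt}$ on $\cI_{\lcAt}$ and the fact that both exponentiated arguments lie there. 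As $\exp_\lcA\colon\IdealA\to 1_\lcA+\IdealA$ is already known to be bijective, and $\InfChar{\Hopf}{\lcB}\subseteq\IdealA$ while $\Char{\Hopf}{\lcB}\subseteq 1_\lcA+\IdealA$, this exhibits $\exp_\lcA$ as a bijection of $\InfChar{\Hopf}{\lcB}$ onto $\Char{\Hopf}{\lcB}$. The one place where care is really needed is the third paragraph: verifying that $L$ and $R$ are homomorphisms with commuting images and that $L(\phi)\star R(\psi)=\phi\diamond\psi$ via the explicit coproduct of the tensor Hopf algebra, and checking that $\rho\phi$, $L\phi$, $R\phi$ stay inside $\cI_{\lcAt}$ so that every exponential in the chain is defined — routine, but the part most prone to slips.
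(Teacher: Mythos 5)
Your argument is correct and is essentially the paper's own proof: both work in the auxiliary densely graded algebra $\lcAt = \Hom_\K(\Hopf\otimes\Hopf,\lcB)$, use the bilinear operation $(\phi,\psi)\mapsto m_\lcB\circ(\phi\otimes\psi)$ and its interchange law with $\star$, transport $\exp$ along the continuous algebra homomorphisms $\phi\mapsto\phi\circ m_\Hopf$, $\phi\mapsto\phi\diamond 1_\lcA$, $\phi\mapsto 1_\lcA\diamond\phi$, and conclude via the injectivity of $\exp_{\lcAt}$ on its augmentation ideal together with the bijectivity of $\exp_\lcA\colon\IdealA\to 1_\lcA+\IdealA$. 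The only differences are presentational (you run the chain of equivalences from the character side and isolate the ``homomorphisms intertwine the functional-calculus exponential'' step as an explicit lemma), so nothing substantive is missing.
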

 
 \begin{proof}
  \newcommand{\td}{\diamond}
  We regard the tensor product $\Hopf\otimes \Hopf$ as a graded and connected Hopf algebra with the tensor grading, i.e.\ $\Hopf \otimes \Hopf = \bigoplus_{n \in \N_0} (\Hopf \otimes \Hopf)_n$ where for all $n \in \N_0$ the $n$th degree is defined as $(\Hopf \otimes \Hopf)_n = \bigoplus_{i+j = n} \Hopf_i \otimes \Hopf_j$.
  
  The set of linear maps $\lcAt\coloneq \Hom_{\K}(H\otimes H,\lcB)$ is a densely graded locally convex algebra with the convolution product $\star_{A_\otimes}$. 
  Let $\func{m_\lcB}{\lcB\otimes \lcB}{\lcB}{b_1\otimes b_2}{b_1\cdot b_2}$ be the multiplication in $\lcB$. 
  We define a continuous bilinear map
  \[
   \beta \colon \nnfunc{\lcA \times \lcA}{\lcAt}{(\phi,\psi)}{\phi \td \psi \coloneq  m_\lcB \circ (\phi \otimes \psi)}
  \]
  and will now check that $\beta$ is continuous.
  Let $c=\sum_{k=1}^n c_{1,k}\otimes c_{2,k}\in \Hopf\otimes\Hopf$ be a fixed element. It remains to show that $\phi\td\psi (c)$ depends continuously on $\phi$ and $\psi$:
  \begin{align*}
   (\phi \td \psi) (c)	  &  	= 		m\circ (\phi\otimes\psi)\left(c\right)
			  	= 		m\circ ( \phi\otimes\psi) \left(	\sum_{k=1}^n c_{1,k}\otimes c_{2,k}	\right) 
			\\&	= \sum_{k=1}^n 	m\left(\phi(c_{1,k}) \otimes \psi(c_{2,k}) \right)
				= \sum_{k=1}^n 	\phi(c_{1,k}) \cdot \psi(c_{2,k})
  \end{align*}
  This expression is continuous in $(\phi,\psi)$ since point evaluations are continuous as well as multiplication in the locally convex algebra $\lcB$. 

  We may use this operation to write the convolution in $A$ as $\star_A = \beta \circ \Delta$ and obtain
  \begin{equation}\label{eq: multifalt}
   (\phi_1 \td \psi_1 )\star_{A_\otimes} (\phi_2 \td \psi_2) = (\phi_1\star_A \phi_2) \td (\psi_1 \star_A \psi_2).
  \end{equation}
  Recall, that $1_\lcA\coloneq u_\lcB\circ\epsilon_\Hopf$ is the neutral element of the algebra $\lcA$.
  From equation \eqref{eq: multifalt}, it follows at once, that the continuous linear maps
  \begin{equation}\label{eq:alg:hom}
   \beta (\cdot, 1_A) \colon \nnfunc{\lcA}{\lcAt}{\phi}{\phi \td 1_\lcA} \quad \text{ and } \quad \beta (1_A, \cdot) \colon \nnfunc{\lcA}{\lcAt}{\phi}{1_\lcA \td \phi} 
  \end{equation}
  are continuous algebra homomorphisms. 
  
  %In the following, we will use the binary operation $\td$ to rephrase the definition of characters and infinitesimal characters. 
  We will now exploit $\td$ to prove that the bijection $\smfunc{\exp_\lcA}{\IdealA}{1_\lcA + \IdealA}$ (see Lemma \ref{lem: exp_and_log}) maps the set $\InfChar{H}{\lcB}$ onto $\Char{H}{\lcB}$. 
  Let $\phi\in \IdealA$ be given and recall: 
    \begin{enumerate}
     \item The Hopf algebra product $m_\Hopf$ maps $\Hopf_0 \otimes \Hopf_0$ into $\Hopf_0$. Now $\Hopf_0 \otimes \Hopf_0 = (\Hopf \otimes \Hopf)_0$ (tensor grading) entails for $\phi \in \cI_\lcA$ that $\phi \circ m_\Hopf \in \cI_{\lcA_\otimes}$.
     \item From \eqref{eq: multifalt} we derive that $(\phi \td 1_\lcA) \star_{\lcA_\otimes} (1_\lcA \td \phi) = \phi \td \phi =  (1_\lcA \td \phi) \star_{\lcA_\otimes}  (\phi \td 1_\lcA)$. 
     If $\phi \in \lcA$ is an infinitesimal character then $\phi \circ m_\Hopf = \phi \td 1_\lcA + 1_\lcA \td \phi$. 
     Together with (a) this shows that Lemma \ref{lem: exp:com} is applicable and as a consequence 
      \begin{equation}\label{eq: com:ptm}
       \exp_{\lcAt} (\phi \td 1_\lcA + 1_\lcA \td \phi) = \exp_{\lcAt} (\phi \td 1_\lcA) \star_{\lcA_\otimes} \exp_{\lcAt} (1_\lcA \td \phi).
      \end{equation}

    \end{enumerate}  
  Note that it suffices to check multiplicativity of $\exp_\lcA (\phi)$ as $\exp_{\lcA}(\phi)(1_\Hopf)=1_\lcB$ is automatically satisfied. 
  To prove the assertion we establish the following equivalences:
  \begin{align*}
	\phi 	\in \InfChar{\Hopf}{\lcB}	& \stackrel{\text{Def}}{\iff} \phi \circ m_\Hopf = \phi \td 1_\lcA + 1_\lcA \td \phi \\
						& \stackrel{\text{(a)}}{\iff}     \exp_{\lcAt}(\phi \circ m_\Hopf)  	 = 	\exp_{\lcAt}(\phi \td 1_\lcA + 1_\lcA \td \phi)							\\
						& \stackrel{\eqref{eq: com:ptm}}{\iff} \exp_{\lcAt}(\phi \circ m_\Hopf)  	 = 	\exp_{\lcA_\otimes}(\phi \td 1_\lcA) \star_{\lcA_\otimes} \exp_{\lcA_\otimes}(1_\lcA \td \phi)				\\
						&\stackrel{\eqref{eq:alg:hom}}{\iff} \exp_{\lcAt}(\phi \circ m_\Hopf)  	 = 	\bigl( \exp_{\lcA}(\phi) \td 1_\lcA  \bigr)\star_{\lcA_\otimes} \bigl(1_\lcA \td \exp_{\lcA}(\phi) \bigr)	\\
						&\stackrel{\eqref{eq: multifalt}}{\iff} \exp_{\lcAt}(\phi \circ m_\Hopf)  	 = 	\bigl( \exp_{\lcA}(\phi)\star_{\lcA} 1_\lcA \bigr) \td \bigl(1_\lcA  \star_{\lcA} \exp_{\lcA}(\phi) \bigr)	\\
						&\iff \exp_{\lcAt}(\phi \circ m_\Hopf)  	 = 	\exp_{\lcA}(\phi) \td  \exp_{\lcA}(\phi) 							\\
						&\stackrel{\text{\ref{lem: dual_comult}}}{\iff} 
						      \exp_{\lcA}(\phi) \circ m_\Hopf  		 = 	\exp_{\lcA}(\phi) \td  \exp_{\lcA}(\phi) 	\\
						&\stackrel{\text{Def}}{\iff}
						      \exp_{\lcA}(\phi) \in \Char{\Hopf}{\lcB}\qedhere
  \end{align*}
 \end{proof}

\begin{rem}\label{rem: salv:wc}
 The chain of equivalences in the proof of Lemma \ref{lem: exp:bij} uses the dense grading of $\lcA = \Hom_\K (\Hopf,\lcB)$ twice to show that the first to third lines are equivalent. 
 However, for arbitrary $\Hopf$ and weakly complete $\lcB$ the second and third line are still equivalent: 
 In this case $\lcA_{\otimes}$ is weakly complete and Lemma \ref{lem: fundamental_lemma_of_weakly_complete_algebras} (c) allows us to embed $\lcA_\otimes$ into $P \coloneq \prod_{i \in I} \lcA_i$ (product of Banach algebras in the category of topological algebras).
 This implies that the formula $\exp_{\lcA_\otimes} (a+b) = \exp_{\lcA_\otimes}(a)\exp_{\lcA_\otimes} (b)$ used in \eqref{eq: com:ptm} still holds as the power series defining $\exp_\lcA$ converges on $P$ and satisfies the formula (which is component-wise true in every Banach algebra).    
\end{rem}

\section{Weakly complete vector spaces and duality}						\label{app: weakly complete}

The purpose of this section is to exhibit the duality between the category of abstract vector spaces and the category of weakly complete topological vector spaces. 
Although none of this is needed for the results of this paper, many ideas of this paper appear to be more natural in this wider setting.

Throughout this section, let $\K$ be a fixed Hausdorff topological field. Although, in this paper, we are only interested in the cases $\K=\R$ and $\K=\C$, the statements in this appendix hold for an arbitrary Hausdorff field of any characteristic, including the discrete ones.\footnote{In functional analysis, usually only $\R$ and $\C$ with their usual field topologies are considered, where in algebra usually an arbitrary field with the discrete topology is considered. Our setup includes both cases (and many more, e.g.~the $p$-adic numbers, etc.).}
We start with a definition.

\begin{defn}								\label{def: weakly_complete_space}
 A topological vector space $\EE$ over the topological field $\K$ is called \emph{weakly complete topological vector space} (or \emph{weakly complete space} for short) if one of the following equivalent conditions is satisfied:
 \begin{itemize}
  \item [(a)] There exists a set $I$ such that $\EE$ is topologically isomorphic to $\K^I$.
  \item [(b)] There exists an abstract $\K$-vector space $\VV$ such that $\EE$ is topologically isomorphic to $\VV^*\coloneq\Hom_\K(\VV,\K)$ with the weak*-topology
  \item [(c)] The space $E$ is the projective limit of its finite-dimensional subspaces, where each $n$-dimensional subspace being topologically isomorphic to $\K^n$
 \end{itemize}
 For the case $\K=\R$ or $\K=\C$, these conditions are also equivalent to the following conditions:
 \begin{itemize}
  \item [(d)] The space $\EE$ is locally convex and is complete with respect to the weak topology.
  \item [(e)] The space $\EE$ is locally convex, it carries its weak topology and is complete with this topology.
 \end{itemize}
\end{defn}
The proof that (a)$\iff$(b)$\iff$(c)  can be found in \cite[Appendix 2]{MR2337107}). 
The characterisations (d) and (e) justify the name \emph{weakly complete}.

\begin{rem}
 Part (b) of the preceding definition tells us that the \emph{algebraic dual} $\VV^*$ of an abstract vector space $\VV$ becomes a weakly complete topological vector space with respect to the weak*-topology, i.e.~the topology of pointwise convergence.
 
 Conversely, given a weakly complete vector space $E$, we can consider the \emph{topological dual} $\EE'$ of all \emph{continuous} linear functionals. Although there are many vector space topologies on this topological dual, we will always take $E'$ as an abstract vector space.
\end{rem}

One of the main problems when working in infinite-dimensional linear (and multilinear) algebra is that a vector space $\VV$ is no longer isomorphic to its bidual $(\VV^*)^*$. 
The main purpose of this section is to convince the reader that the reason for this bad behaviour of the bidual is due to the fact that the wrong definition of a bidual is used (at least for infinite-dimensional spaces). 

If we start with an abstract vector space $\VV$, then its dual is a weakly complete space $\VV^*$ and consequently, one should \emph{not} take the algebraic dual $(\VV^*)^*$ but the topological dual $(\VV^*)'$ which is the natural choice.
For a finite dimensional space the construction coincides with the usual definition of the bidual.
In the general case however, the so obtained bidual is now canonically isomorphic to the original space as the following proposition shows:

\begin{prop}[Duality and Reflexivity]											\label{prop: duality_and_reflexivity}
 Let $\EE$ be a weakly complete space and let $\VV$ be an abstract vector space. 
 There are natural isomorphisms
 \[
   \Func{\eta_\EE}{\EE}{  ( \EE' )^* }{x}{(\func{ \eta_\EE(x)\coloneq \phi_x}{\EE'}{\K}{\lambda}{\lambda(x)})}
 \]
 and
 \[
   \Func{\eta_\VV}{\VV}{  ( \VV^*) ' }{v}{(\func{ \eta_\VV(v)\coloneq \lambda_v}{\VV^*}{\K}{\phi}{\phi(v)}).}
 \]
\end{prop}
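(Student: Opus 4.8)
The plan is to verify that the two maps $\eta_\EE$ and $\eta_\VV$ are well-defined linear bijections, invoking the duality between abstract vector spaces and weakly complete spaces recorded in Definition \ref{def: weakly_complete_space}. I would treat the two statements in parallel, since they are dual to one another, but it is cleanest to start with $\eta_\VV$, the reflexivity statement for abstract vector spaces. First I would check that $\eta_\VV$ is well-defined: for $v \in \VV$ the assignment $\phi \mapsto \phi(v)$ is clearly linear on $\VV^* = \Hom_\K(\VV,\K)$, and it is continuous for the weak*-topology on $\VV^*$ precisely because that topology is the topology of pointwise convergence, so point evaluation at $v$ is one of the defining continuous functionals. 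Hence $\eta_\VV(v) \in (\VV^*)'$. Linearity of $v \mapsto \eta_\VV(v)$ is immediate.

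The substance is injectivity and surjectivity of $\eta_\VV$. Injectivity follows from the fact that $\VV^*$ separates the points of $\VV$: if $v \neq 0$, extend $\{v\}$ to a basis of $\VV$ and take the coordinate functional, giving $\phi$ with $\phi(v) \neq 0$, so $\eta_\VV(v) \neq 0$. For surjectivity, let $\Lambda \in (\VV^*)'$ be a continuous linear functional on the weakly complete space $\VV^*$. By the definition of the weak*-topology, continuity of $\Lambda$ means that $\Lambda$ factors through finitely many point evaluations $\ev_{v_1},\dots,\ev_{v_n}$; more precisely, the kernel of $\Lambda$ contains the intersection of the kernels of $\ev_{v_1},\dots,\ev_{v_n}$ for some $v_1,\dots,v_n \in \VV$, whence by elementary linear algebra $\Lambda$ is a linear combination $\sum_i c_i \ev_{v_i} = \ev_{\sum_i c_i v_i}$. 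This exhibits $\Lambda = \eta_\VV(\sum_i c_i v_i)$, so $\eta_\VV$ is onto. Naturality (with respect to linear maps $\VV \to \VV'$) is a routine diagram chase that I would state but not belabour.

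For $\eta_\EE$ I would argue dually. Given a weakly complete $\EE$, Definition \ref{def: weakly_complete_space}(b) gives an abstract vector space $\VV$ with $\EE \cong \VV^*$; under this identification $\EE' $ is naturally $\VV$ (this is the content of the previous remark and of \cite[Appendix 2]{MR2337107}), and then $(\EE')^* = \VV^*= \EE$, with $\eta_\EE$ corresponding to $\eta_\VV$ under the identifications — so the bijectivity of $\eta_\EE$ reduces to that of $\eta_\VV$ already proved. Alternatively, and perhaps more transparently, I would give the direct argument: $\eta_\EE(x)\colon \lambda \mapsto \lambda(x)$ is visibly a linear functional on the abstract vector space $\EE'$, so $\eta_\EE(x) \in (\EE')^*$ with no continuity condition to check; injectivity of $\eta_\EE$ is the statement that $\EE'$ separates points of $\EE$, which holds because $\EE \cong \K^I$ and the coordinate projections are continuous and separate points; and surjectivity amounts to showing every linear functional on $\EE'$ is of the form $\lambda \mapsto \lambda(x)$, which is again the reflexivity of abstract vector spaces applied to $\VV = \EE'$ together with $(\EE')^* \cong \EE$.

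The main obstacle — really the only nontrivial point — is the surjectivity of $\eta_\VV$, i.e.\ the claim that a weak*-continuous linear functional on $\VV^*$ is a point evaluation. This rests on the standard functional-analytic lemma that if a linear functional $\Lambda$ vanishes on the common kernel of finitely many linear functionals $\ell_1,\dots,\ell_n$ then $\Lambda \in \mathrm{span}\{\ell_1,\dots,\ell_n\}$, combined with the description of a basic weak*-neighbourhood of $0$ as $\{\phi : |\phi(v_i)| < \varepsilon,\ i=1,\dots,n\}$. Since this and the whole (a)$\iff$(b)$\iff$(c) equivalence are precisely what \cite[Appendix 2]{MR2337107} establishes, I would cite that reference for the technical core and keep the exposition here short, emphasising that the bidual is canonically the original object once one takes the topological dual on the weakly complete side.
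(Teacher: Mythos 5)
Your proposal is correct and follows essentially the same route as the paper's own (sketched) proof: both rest on the identification of $\EE'$ (for $\EE\cong\K^I$) respectively $(\VV^*)'$ with the span of the coordinate projections/point evaluations, under which $\eta_\EE$ and $\eta_\VV$ become the identity. The only difference is that you spell out the step the paper dismisses as ``easy to see'' -- namely that a weak*-continuous functional vanishes on the common kernel of finitely many evaluations and is therefore a finite linear combination of them -- which is a welcome amplification rather than a different argument.
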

\begin{proof}[Proof (Sketch)]
 Let $\EE$ be a weakly complete vector space. We may assume that $\EE=\K^I$ for a set $I$. 
 Then each projection map $\smfunc{\pi_i}{\K^I}{\K}$ on the $i$-th component is an element in $\EE'$. It is easy to see that $(\pi_i)_{i\in I}$ is in fact a basis of the abstract vector space $\EE'$. This means that the algebraic dual of $\EE'$ is topologically isomorphic to $\K^I$. Using this identification, one can check that the map $\eta_\EE$ is the identity.

 Similarly, let $\VV$ be an abstract vector space. By Zorn's Lemma, pick a basis $(b_i)_{i\in I}$. Then the dual space $\VV^*$ is isomorphic to $\K^I$. And therefore, the dual of that one $(\VV^*)'$ has a basis $(\pi_i)_{i\in I}$. Under this identification, the linear map $\eta_\VV$ is the identity.
\end{proof}

\begin{setup}[The weakly complete tensor product]
 One way to understand Proposition \ref{prop: duality_and_reflexivity} is that every element $x$ in a weakly complete space $\EE$ can be identified with a linear functional $\phi_x=\eta_\EE(x)\in (\EE')^*$ on the abstract vector space $\EE'$. 
 This enables us to define a tensor product of two elements $x\in\EE$ and $y\in\FF$ as the tensor product of the corresponding linear functionals
 \[
  \Func{x\otimes y \coloneq \phi_x \otimes \phi_y}{\EE'\otimes\FF'}{\K}{\lambda\otimes \mu }{\phi_x (\lambda) \cdot \phi_y(\mu) = \lambda(x) \cdot \mu(y).}
 \]
 This element $x\otimes y$ is now a linear functional on the abstract vector space $\EE'\otimes\FF'$. This motivates the definition:
 \[
  \EE\wcotimes \FF \coloneq (\EE'\otimes\FF')^*.
 \]
 If the spaces $\EE$ and $\FF$ are of the form $\EE=\K^I$ and $\FF=\K^J$, it is easy to verify that the space $\K^I\wcotimes\K^J = (\EE'\otimes\FF')^*$ is canonically isomorphic to $\K^{I\times J}$.
 This could have been taken as the definition of the weakly complete tensor product in the first place.
 However, the definition we chose has the advantage that is independent of the choice of coordinates, i.e.~the specific isomorphisms $\EE\cong\K^I$ and $\FF\cong\K^J$, respectively.
\end{setup}

\begin{prop}[The universal property of the weakly complete tensor product]
 Let $\EE,\FF,\HH$   be weakly complete spaces and let $\smfunc{\beta}{\EE\times \FF}{\HH}$ be a continuous bilinear map.
 Then there exists a unique continuous linear map $\smfunc{\beta^\sim}{\EE\wcotimes\FF}{\HH}$ such that the following diagram commutes:
 \[
   \xymatrix{
		  \EE \times \FF \ar[rr]^{\beta}\ar[d]_\otimes		& & 	\HH		\\
		  \EE\wcotimes \FF \ar[rru]_{\beta^\sim}
   }
 \]
 For the case $\K=\R$ or $\K=\C$, this universal property also holds for arbitrary complete locally convex spaces $\HH$, showing that this weakly complete tensor product is just a special case of the usual \emph{projective tensor product} for locally convex vector spaces.
\end{prop}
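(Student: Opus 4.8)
The plan is to prove this by transporting everything through the duality between $\WCCat$ and $\VSCat$ recorded above, so that the universal property becomes a statement about \emph{algebraic} linear maps. The technical heart, which I would isolate as a lemma, is the claim that for weakly complete $\EE,\FF$ the natural map $\EE'\otimes\FF'\to\operatorname{Bil}_{\mathrm{cont}}(\EE\times\FF;\K)$, sending an elementary tensor $\lambda\otimes\mu$ to the form $(x,y)\mapsto\lambda(x)\mu(y)$, is an \emph{isomorphism} onto the space of continuous bilinear forms (not merely a dense embedding). Injectivity is routine, since $\EE'$ and $\FF'$ separate points. For surjectivity I would write $\EE=\K^I$, $\FF=\K^J$ and use that continuity of a bilinear $b\colon\K^I\times\K^J\to\K$ gives a basic $0$-neighbourhood $U\times V$ cut out by finitely many coordinates $I_0\subseteq I$, $J_0\subseteq J$ with $|b|\le 1$ on it; bilinearity and rescaling then force $b(x,y)=0$ whenever $x$ vanishes on all of $I_0$ (symmetrically for $J_0$), so $b$ factors through the finite-dimensional $\K^{I_0}\times\K^{J_0}$, on which every bilinear form is a finite sum of elementary tensors, and pulling these back along the projections finishes it. I would record that the identical argument works verbatim with $\K$ replaced by an arbitrary Banach space, since that is needed for the last claim.

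Granting the lemma, the universal property is essentially bookkeeping. Using reflexivity I identify $\HH=(\HH')^*$ and $\EE\wcotimes\FF=(\EE'\otimes\FF')^*$. Because the topology of $(\HH')^*$ is that of pointwise convergence on $\HH'$, a map $\beta\colon\EE\times\FF\to\HH$ is continuous and bilinear precisely when $\lambda\mapsto\bigl((x,y)\mapsto\langle\beta(x,y),\lambda\rangle\bigr)$ is a linear map $\HH'\to\operatorname{Bil}_{\mathrm{cont}}(\EE\times\FF;\K)$; by the lemma this is a natural bijection between continuous bilinear maps $\EE\times\FF\to\HH$ and $\Hom_\VSCat(\HH',\EE'\otimes\FF')$. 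On the other side, the duality of $\WCCat$ with $\VSCat$ identifies $\Hom_\WCCat\bigl((\EE'\otimes\FF')^*,(\HH')^*\bigr)$ with $\Hom_\VSCat(\HH',\EE'\otimes\FF')$ by transposition (and reflexivity). Composing the two bijections yields a natural bijection between continuous linear maps $\EE\wcotimes\FF\to\HH$ and continuous bilinear maps $\EE\times\FF\to\HH$; tracing an elementary tensor $x\otimes y$ through the identifications shows the canonical map $\otimes\colon\EE\times\FF\to\EE\wcotimes\FF$ corresponds to $\id_{\EE'\otimes\FF'}$, so the bijection is exactly $T\mapsto T\circ\otimes$ — which is the asserted existence and uniqueness of $\beta^\sim$. (Uniqueness alone also follows directly: by the same linear-independence trick, the elementary tensors span a weak*-dense subspace of $(\EE'\otimes\FF')^*$.)

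For the final sentence, with $\K\in\{\R,\C\}$ and $\HH$ an arbitrary complete locally convex space, I would write $\HH=\lim_{\leftarrow}\HH_p$ as the reduced projective limit of the Banach spaces $\HH_p$, the completions of $\HH/p^{-1}(0)$ over the continuous seminorms $p$. For each $p$, the map $\pi_p\circ\beta\colon\EE\times\FF\to\HH_p$ is continuous bilinear into a Banach space, hence — by the Banach version of the lemma — factors through a linear map $\K^{I_p}\otimes\K^{J_p}\to\HH_p$ depending on finitely many coordinates; precomposing with the coordinate projection $\EE\wcotimes\FF=\K^{I\times J}\to\K^{I_p}\otimes\K^{J_p}$ produces compatible continuous linear maps whose projective limit is the desired $\beta^\sim\colon\EE\wcotimes\FF\to\HH$, with $\beta^\sim\circ\otimes=\beta$ checked componentwise and uniqueness again from density of elementary tensors. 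Since $\EE\wcotimes\FF$ is itself weakly complete, hence complete, and now carries the same universal property as the completed projective tensor product $\EE\,\widehat\otimes_\pi\,\FF$, the canonical comparison map between them is an isomorphism; this is the claimed identification with the projective tensor product.

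I expect the main obstacle to be exactly the lemma of the first paragraph: showing that a continuous bilinear form on a product of weakly complete spaces already lies in the \emph{algebraic} tensor product of the duals — i.e.\ that continuous multilinear maps out of $\K^I$ only see finitely many coordinates. Everything downstream is a formal consequence of this together with the reflexivity and duality statements already established.
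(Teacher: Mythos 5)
Your argument is correct, but there is nothing in the paper to measure it against: Proposition C.5 is stated in the expository appendix without any proof (the surrounding duality facts are delegated to \cite{MR2035107} and \cite[Appendix 2]{MR2337107}), so your write-up supplies an argument the authors omitted, and it is the natural one. You have correctly isolated where the content lies: the lemma that a continuous bilinear form on $\K^I\times\K^J$ (scalar- or Banach-space-valued) only sees finitely many coordinates, so that the continuous bilinear forms on $\EE\times\FF$ are exactly $\EE'\otimes\FF'$. Your rescaling/absorption proof of this is valid for $\K\in\{\R,\C\}$, and the remaining steps are sound bookkeeping: the identification of continuous bilinear maps into $\HH=(\HH')^*$ with linear maps $\HH'\to\EE'\otimes\FF'$ uses only that the topology of $\HH$ is that of pointwise convergence on $\HH'$, the passage to $\Hom_{\WCCat}(\EE\wcotimes\FF,\HH)$ uses the reflexivity of Proposition \ref{prop: duality_and_reflexivity} (equivalently the full faithfulness of dualisation recorded afterwards), tracing $x\otimes y$ through the identifications does show the bijection is $T\mapsto T\circ\otimes$, and the weak*-density of the span of elementary tensors (your linear-independence observation) gives both uniqueness and the compatibility of the maps $T_p$ in the projective-limit construction for a general complete locally convex $\HH$; the final comparison with $\EE\,\widehat\otimes_\pi\,\FF$ via the shared universal property is standard and correct. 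One small caveat: the appendix permits an arbitrary Hausdorff topological field, and for a discrete field the rescaling and absorption steps are unavailable; the lemma still holds there, but one should instead combine joint continuity at $(0,0)$ (yielding finite sets $I_0,J_0$ with $b=0$ whenever both arguments vanish on them) with separate continuity of the finitely many functionals $b(e_i,\cdot)$ and $b(\cdot,e_j)$ for $i\in I_0$, $j\in J_0$, each of which again factors through finitely many coordinates. For $\K=\R,\C$ --- the only case the paper uses, and the only case in which the concluding comparison with the projective tensor product is asserted --- your proof stands as written.
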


\begin{setup}[Duality of the monoidal categories $\WCCat$ and $\VSCat$] \mbox{}
\begin{itemize}
 \item[(i)] Denote the category of weakly complete spaces and continuous linear maps by $\WCCat$.
 Together with the weakly complete tensor product  and the ground field $\K$ as unit object, we obtain a monoidal category
 $(\WCCat,\wcotimes,\K)$.
 \item[(ii)] Denote the monoidal category of abstract vector spaces, abstract linear maps, the usual abstract tensor product and the ground field as unit object by
 $(\VSCat,\otimes,\K)$.
\end{itemize}
 These two categories are dual to each other. 
 The dualities are given by the contravariant monoidal functors \emph{algebraic dual}
 \[
   \BiFunc{(\cdot)^*}{\VSCat}{\WCCat}{\VV}{\VV^*}{(\smfunc{\Phi}{\VV}{\WW})}{(\func{\Phi^*}{\WW^*}{\VV^*}{\phi}{\phi\circ \Phi)} }
 \]
 and \emph{topological dual}
 \[
   \BiFunc{(\cdot)' }{\WCCat}{\VSCat}{\EE}{\EE '}{(\smfunc{ T  }{\EE}{\FF})}{(\func{  T  '}{\FF '}{\EE'}{\lambda}{\lambda\circ T)} }
 \]
 (cf.\ Proposition \ref{prop: duality_and_reflexivity}).
 %These functors allow us to translate statements from the abstract world to the weakly complete world and vice versa.
 The duality interchanges direct sums in the abstract category with direct products in the weakly complete category, hence graded vector spaces (\ref{setup: abstract grading}) are assigned to densely graded vector spaces (\ref{setup: dense grading}).
 For more information about this duality, we refer to \cite[page 679]{MR2035107} and to \cite[Appendix 2]{MR2337107}.
\end{setup}
 
\begin{setup}
 We can naturally define weakly complete algebras, weakly complete coalgebras, weakly complete bialgebras and weakly complete Hopf algebras using the weakly complete tensor product in $(\WCCat,\wcotimes,\K)$. By duality, we get the correspondence:\medskip

\setlength{\extrarowheight}{1.5pt}
\begin{tabular}{|p{4.6cm}|p{7.2cm}|} \hline
	{\textbf{Abstract world}} $(\VSCat,\otimes,\K)$	& 	{\textbf{Weakly complete world}}  $(\WCCat,\widetilde{\otimes},\K)$		\\
     \hline
			abstract vector space	& weakly complete vector space			\\
			\quad linear map		& \quad continuous linear map				\\
			\quad graded vector space	& \quad densely graded weakly complete vector space	\\
     \hline
			abstract coalgebra	& weakly complete algebra			\\
     \hline
			abstract algebra	& weakly complete coalgebra			\\
     \hline
			abstract bialgebra	& weakly complete bialgebra			\\
     \hline
			abstract Hopf algebra	& weakly complete Hopf algebra			\\
			\quad characters		& \quad group like elements				\\
			\quad infinitesimal characters	& \quad primitive elements				\\ \hline
\end{tabular}	
\end{setup}

\begin{rem}
 Note that while a weakly complete algebra is an algebra with additional structure (namely a topology), a weakly complete coalgebra is in general \emph{not} a coalgebra. This is due to the fact that the weakly complete comultiplication
 $
  \smfunc{\Delta}{C}{C\wcotimes C}
 $
 takes values in the completion $C\wcotimes C$, while for a coalgebra it would be necessary that it takes its values in $C\otimes C$ and the canonical inclusion map $C\otimes C \mapsto C\wcotimes C$ goes into the wrong direction (see also \cite[page 680]{MR2035107}). In particular, a Hopf algebra in the weakly complete category is not a Hopf algebra in general.
\end{rem}

Using the duality, we may translate theorems from the abstract category to the weakly complete category, for example the Fundamental Lemma of weakly complete algebras (Lemma \ref{lem: fundamental_lemma_of_weakly_complete_algebras}) follows directly from the the fundamental theorem of coalgebras, stating that every abstract coalgebra is the direct union of its finite-dimensional subcoalgebras. 
It should be mentioned that one of the first proofs of the fundamental theorem of coalgebras by Larson \cite[Prop. 2.5]{MR0209206} used this duality and worked in the framework of topological algebras to show the result about abstract coalgebras.

%Another statement that almost falls out of this duality is the fact that every \emph{injective} morphism of weakly complete spaces is automatically a \emph{topological embedding}, a statement which is false in the category of arbitrary locally convex spaces. Furthermore, every closed vector subspace of a weakly complete space is weakly complete and has a topological complement.

 Let $\Hopf$ be an abstract Hopf algebra and $H\coloneq \Hopf^*$ the corresponding weakly complete Hopf algebra. 
 Then the characters of $\Hopf$ are exactly the group like elements in $H$, while the infinitesimal characters of $\Hopf$ are exactly the primitive elements of $H$.
 This allows us to rephrase the scalar valued case of Theorem \ref{thm: Char:Lie}.

 \begin{thm}[Group like elements in a weakly complete Hopf algebra]
 Let $H$ be a densely graded weakly complete Hopf algebra over $\R$ or $\C$ with $H_0=\K$. Then the group like elements of $H$ form a closed Lie subgroup of the open unit group $H^\times$. The Lie algebra of this group is the weakly complete Lie algebra of primitive elements.
\end{thm}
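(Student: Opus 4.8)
The plan is to deduce this statement from Theorem \ref{thm: Char:Lie} by means of the duality between $\VSCat$ and $\WCCat$. First I would use Lemma \ref{lem: fundamental_lemma_of_weakly_complete_algebras} together with the duality dictionary to write $H = \Hopf^*$ for an abstract $\K$-Hopf algebra $\Hopf$, so that the multiplication of $H$ is the convolution product dual to $\Delta_\Hopf$ and the weakly complete comultiplication $\Delta_H \colon H \to H \wcotimes H$ is dual to $m_\Hopf$. Under this duality a dense grading of $H$ corresponds to a grading of $\Hopf$, and the hypothesis $H_0 = \K$ translates into $\Hopf_0 \cong \K$, i.e.\ $\Hopf$ is graded and connected. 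Thus, as a topological algebra with convolution, $H$ is exactly the algebra $\lcA = (\Hom_\K(\Hopf,\K),\star)$ of \ref{setup: maps on coalgebra} with $\lcB = \K$; since $\K$ is a (commutative) CIA, Lemma \ref{lem: grading and CIA}(b) shows that $H$ is a densely graded CIA, so that $H^\times$ is open in $H$ and is a BCH--Lie group by \ref{thm: glockner_CIA_BCH}.

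Next I would identify the two distinguished subsets via the duality table above. A group-like element $g \in H$ satisfies $\Delta_H(g) = g \wcotimes g$ and $\epsilon_H(g)=1$; evaluating these identities against elements of $\Hopf$ gives precisely $g(ab)=g(a)g(b)$ and $g(1_\Hopf)=1$, so the group-like elements of $H$ are exactly the characters $\Char{\Hopf}{\K}$. Likewise the primitive elements of $H$ are exactly the infinitesimal characters $\InfChar{\Hopf}{\K}$. Now $\K$ is a commutative weakly complete algebra, so Theorem \ref{thm: Char:Lie} applies: $\Char{\Hopf}{\K}$ is a $\K$-analytic BCH--Lie group which is a closed analytic submanifold of $H = \lcA$, its Lie algebra is $\InfChar{\Hopf}{\K}$, and by part (iii) of that theorem its model space $\InfChar{\Hopf}{\K}$ is weakly complete.

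It remains to see that this Lie group is a \emph{closed Lie subgroup} of $H^\times$. The inclusion $\Char{\Hopf}{\K} \hookrightarrow H^\times$ is the restriction of $\id_H$, hence a continuous group homomorphism between two BCH--Lie groups; as in the proof of Theorem \ref{thm: Char_regular}, \cite[Theorem IV.3.3]{MR2261066} then makes it a morphism of Lie groups, and combined with the closedness of $\Char{\Hopf}{\K}$ in $H$ (Lemma \ref{lem: char:mult}) this exhibits $\Char{\Hopf}{\K}$ as a closed Lie subgroup of $H^\times$. Its Lie algebra is $\InfChar{\Hopf}{\K}$, a closed vector subspace of the weakly complete space $H$ and hence itself weakly complete by \cite[Theorem A2.11]{MR2337107}; this is precisely the weakly complete Lie algebra of primitive elements of $H$.

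I do not expect a real obstacle here: all the analytic content is already in Theorem \ref{thm: Char:Lie}, and the only point requiring care is the bookkeeping of the duality --- in particular that the comultiplication of the weakly complete Hopf algebra $H$ lands in the completion $H \wcotimes H$ rather than in $H \otimes H$, so that ``$H$ is a Hopf algebra'' is meant in the weakly complete sense. Since none of the Lie-group constructions use $\Delta_H$, only the convolution product and its open unit group, this subtlety causes no difficulty.
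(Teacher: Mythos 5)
Your proposal is correct and follows essentially the route the paper intends: the theorem is stated there precisely as the translation, under the duality between $\VSCat$ and $\WCCat$, of the scalar-valued case of Theorem \ref{thm: Char:Lie} (group-like elements of $H=\Hopf^*$ correspond to characters of $\Hopf$, primitive elements to infinitesimal characters, and the dense grading with $H_0=\K$ to a connected grading of $\Hopf$). Your additional bookkeeping — openness of $H^\times$ via Lemma \ref{lem: grading and CIA} and \ref{thm: glockner_CIA_BCH}, and the closed Lie subgroup statement via Lemma \ref{lem: char:mult} together with \cite[Theorem IV.3.3]{MR2261066} — matches the argument the paper itself uses in the proof of Theorem \ref{thm: Char_regular}.
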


\phantomsection
\addcontentsline{toc}{section}{References}
\bibliographystyle{new}
\bibliography{Hopf_lit}

\noindent{\small 
\textbf{Geir Bogfjellmo}, 
Institutt for matematiske fag, NTNU, 7491 Trondheim, Norway.\\ Email:
\href{mailto:geir.bogfjellmo@math.ntnu.no}{geir.bogfjellmo@math.ntnu.no}\\[2mm]
\textbf{Rafael Dahmen},
Fachbereich Mathematik, TU Darmstadt,
Schlo\ss{}gartenstr.\ 7, 64289 Darmstadt, Germany. Email:
\href{mailto:dahmen@mathematik.tu-darmstadt.de}{dahmen@mathematik.tu-darmstadt.de}\\[2mm]
\textbf{Alexander Schmeding},
Institutt for matematiske fag, NTNU, 7491 Trondheim, Norway. Email:
\href{mailto:alexander.schmeding@math.ntnu.no}{alexander.schmeding@math.ntnu.no}}\vfill
\end{appendix}

\end{document}